\newcommand{\real}{{\mathbb{R}}}
\newtheorem{theorem}{Theorem}
\newtheorem{corollary}{Corollary}
\newtheorem{lemma}{Lemma}
\newtheorem{definition}{Definition}
\newtheorem{proposition}{Proposition}
\theoremstyle{remark}
\newtheorem{remark}{Remark}
\newtheorem{example}{Example}
\DeclareMathOperator{\im}{Image }
\DeclareMathOperator{\SO}{SO}
\DeclareMathOperator{\GL}{GL}
\DeclareMathOperator{\spn}{span}
\DeclareMathOperator{\Isom}{Isom}
\DeclareMathOperator{\pr}{pr}
\DeclareMathOperator{\SE}{SE}
\DeclareMathOperator{\se}{\mathfrak{se}}
\DeclareMathOperator{\tr}{trace}
\title[rolling manifolds]{An intrinsic formulation of the rolling manifolds problem}
\author[M. Godoy, E. Grong, I. Markina, F. Silva]{Mauricio Godoy Molina\\Erlend Grong\\Irina Markina\\F\'atima Silva Leite}
\address{Department of Mathematics, University of Bergen, Norway.}
\email{mauricio.godoy@math.uib.no}
\address{Department of Mathematics, University of Bergen, Norway.}
\email{erlend.grong@math.uib.no}
\address{Department of Mathematics, University of Bergen, Norway.}
\email{irina.markina@uib.no}
\address{Department of Mathematics and Institute of Systems and Robotics, University of Coimbra, Portugal.}
\email{fleite@mat.uc.pt}
\thanks{The first three authors are supported by the grant of the
Norwegian Research Council \# 177355/V30 and by the grant of the
European Science Foundation Networking Programme HCAA. The last author is partially supported by FCT under project PTDC/EEA-ACR/67020/2006.}
\subjclass[2000]{37J60, 53A55, 53A17}
\keywords{Rolling maps, moving frames, nonholonomic constraints}
\begin{document}

\maketitle

\begin{abstract}

We present an intrinsic formulation of the kinematic problem of two $n-$dimensional manifolds rolling one on another without twisting or slipping. We determine the configuration space of the system, which is an $\frac{n(n+3)}2-$dimensional manifold. The conditions of no-twisting and no-slipping are decoded by means of a distribution of rank $n$. We compare the intrinsic point of view versus the extrinsic one. We also show that the kinematic system of rolling the $n$-dimensional sphere over $\mathbb R^n$ is controllable. In contrast with this, we show that in the case of $SE(3)$ rolling over $\mathfrak{se}(3)$ the system is not controllable, since the configuration space of dimension 27 is foliated by submanifolds of dimension 12.

\end{abstract}

\section{Introduction}

Rolling of surfaces without slipping or twisting is one of the classical kinematic problems that in recent years has again attracted the attention of mathematicians due to its geometric and analytic richness. The kinematic conditions of rolling without slipping or twisting are described by means of motion on a configuration space being tangential to a smooth sub-bundle that we call a distribution. The precise definition of the mentioned motion in the case of two $n$-dimensional manifolds imbedded in $\real^m$, given for example in~\cite{Sharpe}, involves studying the behavior of the tangent bundles of the manifolds and the normal bundles induced by the imbeddings. This approach leads to significant simplifications, for instance, it suffices to study the case in which the still manifold is the $n-$dimensional Euclidean space. The drawback is that the geometric descriptions depend strongly on the imbedding under consideration.

However, so far little attempts have been made to formulate this problem intrinsically. An early enlightening formulation is given in~\cite{BH}, in which the authors study the case of two abstract surfaces rolling in the above described manner. This is achieved by means of an intrinsic version of the moving frame method of \'Elie Cartan which, for this case, coincides with the classical intrinsic study of surfaces, see~\cite{ST}. One of important results established in~\cite{BH} is the non-integrability property of a rank two distribution corresponding to no-twisting and no-slipping restrictions, namely, if the two surfaces have different Gaussian curvature, then the distribution is completely non-integrable and moreover is of Cartan-type, see~\cite{Cartan2}. A control theoretic approach to the same problem, studied in~\cite{AS}, has the advantage that the kinematic restrictions are written explicitly as vector fields on appropriate bundles.

We present a generalization of the kinematic problem for two $n-$dimensional abstract manifolds rolling without twisting or slipping via an intrinsic formulation. We define the configuration space of the system, which is an $\frac{n(n+3)}2-$dimensional manifold and which is a direct analogue to the one found in the references~\cite{BH} and~\cite{AS}. We give several equivalent definitions of rolling motion involving intrinsic characteristics and those that depend only on imbedding and discuss their relations. This new definitions permit to determine the imbedding-independent information contained in the extrinsic definition of the rolling bodies problem presented in~\cite{Sharpe}. Moreover, we relax the smoothness condition of the rolling map up to absolutely continuity. This allows to enlarge the class of mappings under consideration, still giving the possibility to apply the fundamental theorems of differential geometry and control theory without changing drastically the main classical ideas of rolling maps. The conditions of no-twisting and no-slipping define a distribution of rank $n$ in the tangent bundle of the configuration space. We write explicitly the distribution as a local span of vector fields defined on the configuration space. We test the bracket generating condition of the above mentioned distribution on the known example~\cite{Zimm} of rolling the $n$-dimensional sphere over the $n$-dimensional Euclidean space and the special group of Euclidean rigid motions $SE(3)$ rolling over $\mathfrak{se}(3)$. As a result we obtain the controllability of the first system and the non controllability of the latter.

The structure of the present paper is the following. Section~\ref{Def} is an introductory section where we collect necessary definitions and discuss the motivation for the reformulations of kinematic conditions of no-twisting and no-slipping for the rolling problem. We present two formulations and show their equivalence. Section 3 gives a good starting point for comparing different approaches, known in the literature for 2-dimensional rolling manifolds. In Section 4 we give the main formulation of extrinsic rolling as a curve on a configuration space defined as a direct sum of principal bundles over Cartesian product of two rolling manifolds and we prove the equivalence of new extrinsic definition of rolling with the previous ones and deduce the {\it intrinsic} definition of a rolling map. We also prove some theorem enlightening the imbedding independent information contained in the principal definition of the extrinsic rolling. Section 5 is dedicated to the construction of two distributions in the tangent bundle of the configuration space. These distributions encode the no-twisting and no-slipping kinematic conditions of extrinsic and intrinsic rolling. The rolling, both extrinsic and intrinsic, can be written as a curves in configuration space tangent to the corresponding distributions. In Sections 6 and 7 we present detailed calculations for the two aforementioned examples: rolling the $n$-dimensional sphere over the $n$-dimensional Euclidean space and rolling $SE(3)$ over $\mathfrak{se}(3)$. In the first case the distribution is bracket generating, coinciding with the result obtained in~\cite{Zimm}. In the second case we obtain that the configuration space, of dimension 27, is foliated by 12 dimensional submanifolds.

\section{Definition of rolling map for manifolds imbedded in Euclidean space}\label{Def}

\subsection{Rolling without twisting or slipping for imbedded manifolds}
We start from the classical definition of rolling without slipping or twisting of one manifold over another manifold inside the Euclidean space.

Let us start with some notations. Throughout this paper, $M$ and $\widehat{M}$ will always be oriented connected Riemannian manifolds of dimension $n$. By the well known result of Nash, see \cite{Nash}, there are isometric imbeddings of $M$ and $\widehat{M}$, denoted by $\iota$ and $\widehat{\iota}$ respectively, into $\real^{n+\nu}$ for an appropriate choice of~$\nu$. Here and in what follows $\real^{n+\nu}$ will always be equipped with the standard Euclidean metric and standard orientation. As long as there is no possibility for confusion, we will identify the abstract manifolds $M$ and $\widehat{M}$ with their images under the corresponding imbeddings. The imbedding of $M$ into $\real^{n+\nu}$ splits the tangent space of $\real^{n+\nu}$ into a direct sum:
\begin{equation} \label{split}
T_x\real^{n+\nu} = T_xM \oplus T_xM^\perp, \quad x \in M.
\end{equation}
In general, any objects (points, curves, \dots) related to the manifold $\widehat{M}$ will be marked by a hat ($\,\widehat{\,\,}\,$) on top, objects related to $M$ will be free of it, while terms related to the ambient $\mathbb R^{n+\nu}$ space carry a bar ($\,^{-} \,$). We use $\Isom(M)$ for the group of isometries of $M$, and $\Isom^+(M)$ for the group of sense preserving isometries.

We start by given the definition of rolling without twisting and slipping as found in~\cite{Sharpe}.

\noindent{\bf Definition 0.} {\it Let $M, \widehat{M}$ be submanifolds of $\mathbb{R}^{n+\nu}$.
Then, a differentiable map $g:[0,\tau] \to \Isom(\mathbb R^{n+\nu})$ satisfying the following conditions for any
$t \in [0,\tau]$ is called a rolling $M$ on $\widehat{M}$ without slipping or twisting.
The rolling conditions:
\begin{itemize}
\item there is a piecewise smooth curve $x:[0,\tau] \to M$, such that
\begin{itemize}
\item $g(t) x(t) \in \widehat{M}$,
\item $T_{g(t)x(t)}\left(g(t) M \right) = T_{g(t)x(t)} \widehat{M}$.
\end{itemize}
\item Furthermore, the curve $\widehat{x}(t) := g(t) x(t)$ satisfies the following
\begin{itemize}
\item no-slip condition: $$\dot{g}(t) g(t)^{-1} \widehat{x}(t) = 0,$$
\item no-twist condition, tangential part:
$$d(\dot{g}(t) g(t)^{-1}) T_{\widehat{x}(t)}\widehat M \subseteq T_{0}(\dot{g}(t) g(t)^{-1}\widehat M)^\bot,$$
\item No-twist condition, normal part:
$$d(\dot{g}(t) g(t)^{-1}) T_{\widehat{x}(t)}\widehat M^\bot \subseteq T_{0}(\dot{g}(t) g(t)^{-1}\widehat M)$$
\end{itemize}
\end{itemize}
}

\begin{remark}
In the previous definition, we explicitly state that $g:[0,\tau] \to \Isom(\mathbb R^{n+\nu})$ is differentiable. This is not stated in~\cite{Sharpe}, but conditions containing $\dot{g}$ are required to hold for all $t$. Also, a minor inaccuracy in the no-twisting conditions is corrected.
\end{remark}

It is clear that Definition 0 is of extrinsic nature. Thus, in order to obtain an intrinsic formulation of the rolling problem, we want to change the original definition as follows:

\smallskip

\noindent (1) {\it Making $x(t)$ part of the data of the rolling:} The reason is to give a local character to conditions of rolling without twisting or slipping. This will emphasize the dependence of the rolling not just on the isometry $g$ but also on a curve $x$ along which the rolling of $M$ on $\widehat{M}$ can be realized. In some particular cases, this may lead to small changes in terminology. The following example illustrates these ideas.
\begin{example}
Consider the submanifolds of $\mathbb{R}^3$, defined by
$$M = \left\{ (\bar{x}_1, 1 - \cos \theta, \sin \theta) \in \mathbb{R}^3 \vert\
\, \bar{x}_1 \in \mathbb{R},\  \theta \in [0,2\pi) \right\},$$
$$\widehat{M} = \left\{ (\bar{x}_1, \bar{x}_2, 0) \in \mathbb{R}^3 \vert\ \, \bar{x}_1, \bar{x}_2 \in \mathbb{R},\right\}.$$
The rolling map
$$g(t): \bar{x} = \left(\begin{array}{c} \bar{x}_1 \\ \bar{x}_2 \\ \bar{x}_3 \end{array} \right)
\mapsto \left(\begin{array}{c} \bar{x}_1 \\ (\bar{x}_2 -1) \cos t + \bar{x}_3 \sin t + t + \cos 2t \\
- (\bar{x}_2 -1) \sin t + \bar{x}_3 \cos t - \sin 2t \end{array} \right),$$
describes the rolling of the infinite cylinder $M$ on $\widehat M$ along the $\bar{x}_2$-axis
with constant speed 1.
Then there is an infinite choice of curves $x(t)\in M$, given by $$x(t) = (\bar{x}_1, 1- \cos t, \sin t),\quad \bar{x}_1
\in \mathbb{R}$$ along which the rolling $g$ can be realized. However, if we make $x(t)$ as part of the data, then each choice of the curve $x(t)$ will correspond to different rollings $\big(x(t),g(t)\big)$.
\end{example}

\smallskip

\noindent  (2) {\it Relaxing the differentiability conditions for $g(t)$:} We think that the conditions of differentiability of $g(t)$ for all $t\in[0,\tau]$ and piecewise smoothness of $x(t)$ are too restrictive. The requirement that $(x,g):[0, \tau] \to M\times \Isom(\real^{n+\nu})$ is absolutely continuous or Lipschitz seems more natural, since this allows us to implement results from control theory, see Subsection~\ref{ASsubsection}. In this context, absolute continuity of a curve $\big(x(t),g(t)\big)$ on $M\times \Isom(\mathbb R^{n+\nu})$ is considered with respect to the parameter $t$, as in \cite[Chapter 2]{AS}.

\smallskip

\noindent (3) {\it Introducing orientability assumptions:} In order to have a connected configuration space, we exploit the orientability assumption of $M$ and $\widehat{M}$. Since, as mentioned before, the rolling conditions will be local, we may choose an orientable neighborhood of the starting point even on any non-orientable manifold. We will use this to impose some practical restrictions to the definition of a rolling.
\begin{itemize}
\item Since $g(t)$ is continuous, it is either always orientation preserving or orientation reversing isometry of $\mathbb{R}^{n+\nu}$ for all $t$. Given a rolling  $g(t)$ of $M$ on $\widehat{M}$, we may assume that $g(t)$ is always orientation preserving by changing the orientation of~$\mathbb R^{n+\nu}$. To obtain an orientation preserving rolling from an orientation reversing rolling $g(t)$ of $M$ on $\widehat{M}$, pick any constant orientation reversing isometry $g_0$ of $\mathbb R^{n+\nu}$. Then $g_0 g(t)$ is an orientation preserving rolling of $M$ on $g_0(\widehat{M})$.

\item It is intuitively clear that for a fixed $t$, $d_{x(t)}g(t)$ maps elements from $T_{x(t)}M$ to $T_{\widehat{x}(t)}\widehat{M}$ and elements from $T_{x(t)}M^\bot$ to $T_{\widehat{x}(t)}\widehat{M}^\bot$  (for more details see Subsection~\ref{firstreform}). Hence, the matrix form of $d_{x(t)} g(t)$ splits in the following way:

$$\begin{array}{cccc}
    & & \begin{array}{cc} T_{x(t)} M & T_{x(t)} M^\bot \end{array} & \\
    d_{x(t)} g(t) & = & \left(\begin{array}{cc} \, \, A(t) \,\, & \,\, 0 \,\, \\
    \, \, 0 \,\, & \, \, B(t) \,\, \end{array}\right) &
    \begin{array}{l}
    T_{\widehat{x}(t)} \widehat{M} \\
    T_{\widehat{x}(t)} \widehat{M}^\bot.
    \end{array}
\end{array}$$
Since $g(t)$ is orientation preserving, both $d_{x(t)} g(t)|_{T_{x(t)}M}$ and $d_{x(t)} g(t)|_{T_{x(t)}M^\perp}$ are either orientation preserving or orientation reversing. By continuity, $d_{x(t)} g(t)|_{T_{x(t)}M}$ is either orientation preserving or orientation reversing for all $t$. We will require that $d_{x(t)} g(t)|_{T_{x(t)}M}$ is always orientation preserving. If $d_{x(t)} g(t)|_{T_{x(t)}M}$ is orientation reversing, pick any constant orientation preserving isometry $g_0:\mathbb R^{n+\nu}\to\mathbb R^{n+\nu}$
so that
$$dg_0|_{T\widehat{M}}: T\widehat{M} \to T(g_0\widehat{M})$$
is orientation reversing. It is sufficient to show that it reverses the orientation at one point in order to show that it reverses orientation at all points due to the fact that $M$ is oriented. Then $g_0g(t)$ will be a rolling of $M$ on $g_0\widehat{M}$ which is orientation preserving on the tangent space at $x(t)$.
\end{itemize}

Implementing the above changes to Definition 0, we obtain the following, from which several equivalent reformulations will be presented later.
\begin{definition} \label{imbeddef}
A rolling of $M$ on $\widehat{M}$ without twisting or slipping is an absolutely continuous curve $(x,g):[0,\tau] \rightarrow M \times \Isom^+(\mathbb{R}^{n + \nu})$, satisfying the following conditions:
\begin{itemize}
\item[(i)] $\widehat{x}(t) := g(t) x(t) \in \widehat{M}$, for all $t\in[0,\tau]$.
\item[(ii)] $T_{\widehat{x}(t)}( g(t) M) = T_{\widehat{x}(t)} \widehat{M}$, for all $t\in[0,\tau]$.
\item[(iii)] No slip condition: $\dot{g}(t) \circ g^{-1}(t) \widehat{x}(t) = 0,$
for almost every $t$.
\item[(iv)] No twist condition (tangential part):
$$d(\dot{g}(t) \circ g^{-1}(t)) (T_{\widehat{x}(t)} \widehat{M}) \subseteq T_0 (\dot{g}(t) \circ g^{-1}(t) \widehat{M})^\perp,$$ for almost every $t$.
\item[(v)] No twist condition (normal part):
$$d(\dot{g}(t) \circ g^{-1}(t)) (T_{\widehat{x}(t)} \widehat{M}^\perp) \subseteq T_0 (\dot{g}(t) \circ g^{-1}(t) \widehat{M}),$$ for almost every $t$.
\item[(vi)] $d_{x(t)} g(t)|_{T_{x(t)} M}:T_{x(t)} M \to T_{\widehat{x}(t)} \widehat{M}$ is orientation preserving, for all $t\in[0,\tau]$.
\end{itemize}
\end{definition}

We omit, from now on, the words ``without twisting or slipping'', just writing ``a rolling of $M$ on $\widehat{M}$''. Furthermore, for given curves $x(t)$ and $\widehat{x}(t)$ in $M$ and $\widehat{M}$, respectively, the expression "a rolling of $M$ on $\widehat{M}$ along $x(t)$ and $\widehat{x}(t)$" will mean a rolling $(x,g):[0,\tau] \rightarrow M \times \Isom^+(\mathbb{R}^{n+\nu})$ so that $g(t)x(t) = \widehat{x}(t)$.

\begin{remark}
The definitions we will be working with ignore physical restrictions given by the actual shapes of the manifolds. Intuitively, if we think of the manifolds in Definition \ref{imbeddef} as physically touching along the curves $x(t)$ and $\hat{x}(t)$ and rolling according to the isometry $g(t)$, then we cannot rule out the possibility that there might be non-tangential intersections between the manifolds other than the contact points.
\end{remark}

\begin{example}
Consider the imbedded surface
$$M=\{(x_1,x_2,x_3) \in \mathbb{R}^3 \big| \, x_1^2 - x_2^2 + x_3 = 0, \, x_1^2 + x_2^2 < 1 \},$$
and $\widehat{M}=\mathbb{R}^2,$ imbedded as an affine plane. Assume that both manifolds $M$ and $\widehat{M}$ carried the induced metric. We can clearly define a rolling of $M$ on $\widehat{M}$ in terms of Definition \ref{imbeddef}, but there is no way to connect the saddle point in $M$ with any point in $\widehat{M}$ without there being intersections between the surfaces.
\end{example}


\subsection{First reformulation}\label{firstreform}
We aim to give a definition of the rolling of $M$ on $\widehat{M}$ in a way that is more fruitful for future considerations. We fix some notations first. According to the splitting \eqref{split}, any vector $v \in T_x\real^{n+\nu},$ $x \in M$, can be written uniquely as the sum $v = v^\top+v^\bot$, where $v^\top\in T_xM$ is tangent to $M$ at $x$, while $v^\bot\in T_xM^\perp$ is normal. Analogous projections can be defined for $\widehat{M}$.

Let $\nabla$ denote the Levi-Civita connection on $M$ or on $\widehat{M}$. The context will indicate on which manifold the connection is defined. The ``ambient'' Levi-Civita connection on $\mathbb{R}^{n+\nu}$ is denoted by $\overline{\nabla}$. Note that if $X$ and $Y$ are tangent vector fields on $M$, then
$$\nabla_X Y(x) = \left(\overline{\nabla}_{\bar{X}} \bar{Y}(x) \right)^\top, \qquad x \in M,$$
where $\bar{X}$ and $\bar{Y}$ are any local extensions to $\mathbb{R}^{n+\nu}$ of the vector fields $X$ and $Y$, respectively. Similarly, if $\Upsilon$ is a normal vector field on $M$ and $X$ is a tangent vector field on $M$, then the normal connection is defined by
$$\nabla_X^\bot \Upsilon(x) = \left(\overline{\nabla}_{\bar{X}} \bar{\Upsilon}(x) \right)^\bot, \qquad x \in M,$$
where $\bar{\Upsilon}$ is any local extension to $\mathbb{R}^{n+\nu}$ of the vector field $\Upsilon$. Equivalent statements hold for $\widehat{M}$. If no confusions arise, we will use capital Latin letters $X,Y,Z$ to denote tangent vector fields and capital Greek letters $\Upsilon,\Psi$ for notation of normal vector fields.

For a fixed value of $x \in M$ and
a fixed vector field $Y$, the vector $\nabla_X Y(x)$ only depends on the value of $X(x)$.
Therefore, for $v \in T_xM$, we will use $\nabla_v Y$ or $\nabla_v Y(x)$
to mean $\nabla_XY(x)$, where $X$ is an arbitrary vector field satisfying $X(x) = v$.
We will use the same convention when $\nabla$ is interchanged with $\nabla^\bot$.

If $Z(t)$ is a vector field along $x(t)$, we will use $\frac{D}{dt} Z(t)$ to denote the covariant derivative
(corresponding to $\nabla$) of $Z(t)$ along $x(t)$, and for any normal vector field $\Psi(t)$ along $x(t)$,
$\frac{D^\bot}{dt} \Psi(t)$ denotes the normal covariant derivative (see~\cite[p. 119]{ONeill}).
Recall that if $M$ is imbedded isometrically into $\mathbb{R}^{n+\nu}$, then
$$\frac{D}{dt} Z(t) = \left(\frac{d}{dt} Z(t) \right)^\top, \qquad
\frac{D^\bot}{dt} \Psi(t) = \left(\frac{d}{dt} \Psi(t) \right)^\bot,$$
where $Z(t)$ and $\Psi(t)$ are tangential and normal vector fields, respectively, along a curve
in $M$.

We say that a tangent vector $Y(t)$ along an absolutely continuous curve $x(t)$ is parallel if $\frac{D}{dt} Z(t) = 0$
for almost every $t$. Notice that it is possible to define the notion of parallel transport even though the derivative $\dot{x}(t)$ exists only almost everywhere, see,~e.~g., Existence and Uniqueness Theorem in~\cite[Appendix C]{Son}. Namely, for any absolutely continuous curve $x:[0,\tau] \to M$ and for any $v \in T_{x(t_0)}M$, $0 \leq t_0 \leq \tau$, there exists a unique absolutely continuous tangent vector field $Z(t)$ along $x(t)$, such that $Z(t)$ is parallel and satisfies $Z(t_0) =v$.

We say that a normal vector field $\Psi(t)$ along $x(t)$ is {\it normal parallel} if $\frac{D^\bot}{dt}(t) \Psi=0$
for almost every $t$. A normal analogue of parallel transport is defined likewise.

We are now ready to give a new formulation of the rolling map.
\begin{definition} \label{imbeddef2}
A rolling of $M$ on $\widehat{M}$ without slipping or twisting is an absolutely continuous curve $(x,g):~[0,\tau] \to M \times \Isom^+(\real^{n+\nu})$ satisfying the following conditions:
\begin{itemize}
\item[(i')] $\widehat{x}(t) := g(t) x(t) \in \widehat{M}$,
\item[(ii')] $dg(t) T_{x(t)}M = T_{\widehat{x}(t)} \widehat{M}$,
\item[(iii')] No slip condition: $\dot{\widehat{x}}(t)= dg(t) \dot{x}(t),$ for almost every $t$.
\item[(iv')] No twist condition (tangential part):
$$dg(t) \frac{D}{dt} Z(t) = \frac{D}{dt} dg(t) Z,$$
for any tangent vector field $Z(t)$ along $x(t)$ and almost every $t$.
\item[(v')] No twist condition (normal part):
$$dg(t)\frac{D^\bot}{dt} \Psi(t) = \frac{D^\bot}{dt} dg(t) \Psi(t),$$
for any normal vector field $\Psi(t)$ along $x(t)$ and almost every $t$.
\item[(vi')] $d_{x(t)} g(t)|_{T_{x(t)} M}:T_{x(t)} M \to T_{\widehat{x}(t)} \widehat{M}$ is orientation preserving.
\end{itemize}
\end{definition}

\begin{lemma} \label{equiv1}
Definitions \ref{imbeddef} and \ref{imbeddef2} are equivalent.
\end{lemma}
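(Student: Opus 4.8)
The plan is to establish the equivalence condition by condition, showing that (i)$\Leftrightarrow$(i'), (ii)$\Leftrightarrow$(ii'), and similarly for the remaining pairs, since (vi) and (vi') are literally the same statement. The conditions (i) and (i') are identical, so nothing is needed there. For (ii) versus (ii'): since $g(t)$ is an isometry of $\real^{n+\nu}$, it is in particular an affine map, so its differential $dg(t)$ at $x(t)$ sends $T_{x(t)}(g(t)M)$-preimages appropriately; more precisely, $T_{\widehat x(t)}(g(t)M) = d_{x(t)}g(t)\,T_{x(t)}M$ because $g(t)M$ is the image of $M$ under the diffeomorphism $g(t)$. Hence the set equality in (ii) is just (ii') rewritten, and this also justifies the block-diagonal form of $d_{x(t)}g(t)$ claimed before Definition~\ref{imbeddef}: since $g(t)$ is a Euclidean isometry it preserves orthogonality, so it carries $T_xM^\perp$ onto $(d_xg(t)\,T_xM)^\perp = T_{\widehat x(t)}\widehat M^\perp$ once (ii) holds.

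Next I would handle the no-slip condition. The key computation is to differentiate $\widehat x(t) = g(t)x(t)$. Writing $g(t)$ as $y \mapsto R(t)y + b(t)$ with $R(t)\in\SO(n+\nu)$ and differentiating, one gets $\dot{\widehat x}(t) = dg(t)\dot x(t) + \dot g(t)x(t)$ where $\dot g(t)x(t)$ is shorthand for the vector obtained from the "velocity of the moving frame" acting on the point; more cleanly, $\dot{\widehat x} - dg(t)\dot x = \dot g(t)\circ g(t)^{-1}\widehat x(t)$ as elements of $T_{\widehat x(t)}\real^{n+\nu}$, using that $\dot g g^{-1}$ is an infinitesimal isometry (an element of $\se(n+\nu)$ acting affinely). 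Therefore condition (iii), $\dot g(t)\circ g^{-1}(t)\widehat x(t) = 0$, is equivalent to (iii'), $\dot{\widehat x}(t) = dg(t)\dot x(t)$. This is the cleanest of the correspondences once the affine bookkeeping is set up.

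For the no-twist conditions (iv)$\Leftrightarrow$(iv') and (v)$\Leftrightarrow$(v'), I would use the extrinsic description of the covariant derivatives recalled in the excerpt: $\frac{D}{dt}Z(t) = (\frac{d}{dt}Z(t))^\top$ and $\frac{D^\perp}{dt}\Psi(t) = (\frac{d}{dt}\Psi(t))^\perp$. Let $Z(t)$ be a tangent vector field along $x(t)$ and set $W(t) = dg(t)Z(t)$, a vector field along $\widehat x(t)$ which is tangent to $\widehat M$ by (ii). Differentiating $W(t) = R(t)Z(t)$ gives $\frac{d}{dt}W(t) = R(t)\frac{d}{dt}Z(t) + \dot R(t)Z(t) = dg(t)\frac{d}{dt}Z(t) + d(\dot g g^{-1})\,W(t)$, after translating $\dot R$ into $d(\dot g g^{-1})$ applied to the translated vector. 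Now project onto $T_{\widehat x(t)}\widehat M$: the left side gives $\frac{D}{dt}W(t)$, the first term on the right gives $dg(t)\frac{D}{dt}Z(t)$ (because $dg(t)$ commutes with the $\top$/$\bot$ splitting by (ii) and the isometry property, sending $(\frac{d}{dt}Z)^\top$ to $(\frac{d}{dt}W)^\top$ of the pushed-forward field), and the obstruction term is $\big(d(\dot g g^{-1})\,W(t)\big)^\top$. Thus (iv') holds for all tangent $Z$ if and only if $d(\dot g g^{-1})$ maps $T_{\widehat x(t)}\widehat M$ into $T_{\widehat x(t)}\widehat M^\perp$, which — after identifying $T_{\widehat x(t)}\widehat M^\perp$ with $T_0(\dot g g^{-1}\widehat M)^\perp$ via the translation structure of the infinitesimal isometry $\dot g g^{-1}$ — is exactly (iv). The normal case (v)$\Leftrightarrow$(v') is verbatim the same argument with $\top$ replaced by $\bot$ and $Z$ replaced by a normal field $\Psi$, using $dg(t)\Psi$ normal to $\widehat M$ by the orthogonality-preservation of $g(t)$.

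The main obstacle I anticipate is purely notational rather than conceptual: making rigorous the informal identifications between (a) the "matrix" $\dot g(t)g(t)^{-1}$ thought of as an element of $\se(n+\nu)$, (b) its action on points $\widehat x(t)$ appearing in (iii)–(v) of Definition~\ref{imbeddef}, and (c) the linear map $d(\dot g g^{-1})$ on tangent vectors, together with the identification $T_{\widehat x}\widehat M^\perp \cong T_0(\dot g g^{-1}\widehat M)^\perp$. Concretely one must be careful that "$\dot g g^{-1}\widehat M$" denotes the image subspace (the linear part of the infinitesimal isometry applied to $\widehat M$ viewed through the origin) and that all the covariant-derivative manipulations are valid only almost everywhere, which is fine since all conditions are required only almost everywhere and all the vector fields involved are absolutely continuous. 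Once this dictionary is fixed, each of the six equivalences reduces to a one-line computation, and I would present the no-slip and no-twist computations explicitly while noting that (i), (i') and (vi), (vi') require no argument.
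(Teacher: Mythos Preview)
Your proposal is correct and follows essentially the same approach as the paper: both arguments write $g(t)$ in affine form $\bar x \mapsto R(t)\bar x + b(t)$, use $T_{\widehat x(t)}(g(t)M)=d_{x(t)}g(t)\,T_{x(t)}M$ for (ii)$\Leftrightarrow$(ii'), compute $\dot g\circ g^{-1}\widehat x = \dot{\widehat x} - dg\,\dot x$ for (iii)$\Leftrightarrow$(iii'), and for the no-twist conditions differentiate $dg(t)Z(t)$ by the product rule and project tangentially/normally. The paper phrases the last step as $(d\dot g(t)\,v^\top)^\top=0$ rather than your $(d(\dot g g^{-1})W)^\top=0$, but since $d\dot g\,Z = \dot R\,Z = \dot R R^{-1}W = d(\dot g g^{-1})W$ these are the same computation.
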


\begin{proof}
Since (i) and (i') coincide, we begin by proving the equivalence of (ii) and (ii'). Restricting the action of $g(t)$ to $M$, we observe that the differential $d_{x(t)}g(t)$ maps $T_{x(t)}M$ into $T_{g(t)x(t)} \left(g(t) M \right)$ by definition, and hence (ii) holds if and only if (ii') holds.

In order to prove the equivalence between (iii) and (iii') we write a curve $g(t)$ in $\Isom^+(\real^{n+\nu})$ as
$$g(t): \bar{x} \mapsto \bar{A}(t) \bar{x} + \bar{r}(t), \qquad \bar{x} \in \real^{n+\nu},$$
where $\bar{A}:[0,\tau] \to \SO(n+\nu)$ and $\bar{r}:[0,\tau] \to \real^{n+\nu}$. Thus $d_{\bar{x}} g(t) v = \bar{A}(t) v$, $v \in T_{\bar{x}}\real^{n+\nu}$, and we get
\begin{align}
\dot{g}(t) \circ g^{-1}(t) \, \widehat{x}(t) & = \dot{g}(t) x(t) = \dot{\bar{A}}(t) x(t) + \dot{\bar{r}}(t) \nonumber \\
& =\frac{d}{dt}\left(\bar{A}(t) x(t) + \bar{r}(t) \right)  -\bar{A}(t) \dot{x}(t) = \dot{\widehat{x}}(t) - dg(t) \dot{x}(t). \nonumber
\end{align}
whenever $\dot{x}(t)$ is defined. Hence $\dot{g}(t) \circ g^{-1}(t) \widehat{x}(t)= 0$ almost everywhere if and only if $\dot{\widehat{x}}(t) = dg(t) \dot{x}(t)$ almost everywhere.

Before we continue with the final two conditions, notice that (ii') actually states that both $dg(t)(T_{x(t)}M) = T_{\widehat{x}(t)} \widehat{M}$ and $dg(t)(T_{x(t)}M^\bot) = T_{\widehat{x}(t)} \widehat{M}^\bot$ hold due to the splitting~\eqref{split}. Hence, the inverse differential $dg^{-1}(t) = (dg(t))^{-1}$ also maps tangent vectors to tangent vectors and normal vectors to normal vectors. This allows us to restate (iv) and (v) as the conditions
$$\left(d\dot{g}(t) v^\top \right)^\top = 0, \quad \text{and} \quad \left(d\dot{g}(t) v^\bot \right)^\bot = 0,$$
holding for almost every $t$ and for any $v\in T_{x(t)}\mathbb{R}^{n+\nu}$, decomposed as the sum of $v^\top \in T_{x(t)}M$ and $v^\bot \in T_{x(t)}M^\bot$ via the splitting~\eqref{split}.
We calculate
\begin{eqnarray}
0 &=& \left( d\dot{g}(t) \, Z(t) \right)^\top =  \left( \frac{d}{dt} \big(dg(t) \, Z(t)\big) - dg(t) \left(\frac{d}{dt} Z(t) \right) \right)^\top \nonumber\\
&=& \frac{D}{dt} dg(t) \, Z(t) - dg \frac{D}{dt} Z(t)\nonumber
\end{eqnarray} for any tangent vector field $Z(t)$ along $x(t)$, for any value of $t$ where $\dot{x}(t)$ is defined.
By similar calculations, using a normal vector field $\Psi(t)$ along $x(t)$, we obtain
$$dg(t)\frac{D^\bot}{dt} \Psi(t) = \frac{D^\bot}{dt} dg(t) \, \Psi(t).$$
\end{proof}

\begin{remark}
The following observations are useful for the understanding of the nature of a rolling map.
\begin{itemize}
\item The proof of Lemma~\ref{equiv1} shows that indeed condition (ii') is equivalent to the statement $$dg(t) T_{x(t)}M^\perp = T_{\widehat{x}(t)}\widehat{M}^\perp.$$
\item Condition (iv') is equivalent to the requirement that any tangent vector field $Z(t)$ is parallel along $x(t)$ if and only if $dg(t) Z(t)$ is parallel along $\widehat{x}(t)$. As a consequence, this condition is automatically satisfied in the case of one dimensional manifolds.
\item We can reformulate (v') in terms of normal parallel vector fields. Namely, condition (v') is equivalent to the statement that any normal vector field $\Psi(t)$ is normal parallel along $x(t)$ if and only if $dg(t) \Psi(t)$ is normal parallel vector field along $\widehat{x}(t)$. Thus, if the manifolds are imbedded into Euclidean space and the codimension is one (i.e. $\nu =1$), condition (v') always holds.
\end{itemize}
\end{remark}

\section{Previous intrinsic descriptions of rolling maps dimension 2}

The aim of this Section is to present the different intrinsic formulations of a rolling map appearing in literature for two dimensional manifolds. The two best known formulations are given in~\cite{AS,BH}. We start by introducing the configuration space of the rolling for the general case of $n$ dimensional manifolds and then proceed to describe the previously mentioned two dimensional situation.

\subsection{Frame bundles and bundles of isometries} \label{conspace}

Let $M$ and $\widehat M$ be oriented connected Riemannian manifolds of dimension $n$. We introduce the configuration space $Q$ of the rolling, which can be thought of as all the relative positions in which $M$ can be tangent to $\widehat{M}$. Define the principal $\SO(n)$-bundle over $M \times \widehat{M}$ by
$$Q = \left\{\left. q \in \Isom^+_0(T_xM,T_{\widehat{x}} \widehat{M}) \, \right| \, x \in M, \, \widehat{x} \in \widehat{M} \right\}.$$
Here $\Isom_0^+(V,\widehat{V})$ denotes the group of linear orientation preserving isometries between the oriented inner product spaces $V$ and $\widehat{V}$.

The principal $\SO(n)$-bundle structure of the configuration space $Q$ can be also described in the following way. Let $F$ and $\widehat{F}$ be oriented frame bundles of $M$ and $\widehat{M}$, respectively, with the obvious principal $\SO(n)$-bundle structures. Consider $F \times \widehat{F}$ as a bundle over $M \times \widehat{M}$ with $\SO(n)$ acting diagonally on the fibers. Then, we can identify $Q$ with
$\left(F \times \widehat{F}\right)/\SO(n)$ by the following map. Let $\{e_j(x)\}_{j=1}^n$ be a frame in $F$ at $x\in M$ and similarly let $\{\hat{e}_i(\widehat{x})\}_{i=1}^n$ be a frame in $\widehat{F}$ at $\widehat{x}\in\widehat M$. To the equivalence class $$\left(\{e_j(x)\}_{j=1}^n, \{\hat{e}_i(\widehat{x})\}_{i=1}^n \right)\cdot \SO(n)$$ we
associate the unique isometry $q \in \Isom_0^+(T_xM, T_{\widehat{x}} \widehat{M})$ satisfying
\begin{equation} \label{QandFFcor}
\hat{e}_i(\widehat{x}) = q \, e_i(x), \qquad i = 1,\dots n.
\end{equation}
Clearly, this construction does not depend on the choice of a representative of an equivalence class of
$\left(F\times \widehat{F}\right)/\SO(n)$. Conversely, given an isometry, there exists a unique equivalence class of frames satisfying~\eqref{QandFFcor}.

The left and right action on fibers of $Q$ is induced by the inverse left action on $F$ and left action on $\widehat{F}$, respectively. More precisely, an element $A_0 \in \SO(n)$ acts on an isometry $q\in Q$ from the right or left, giving $q_1 = qA_0$ and $q_2 = A_0 q$, respectively. The isometries $q_1$ and $q_2$ are defined by
$$\hat{e}_i(\widehat{x}) = q_1 \, A_0^{-1} e_i(x), \qquad A_0\hat{e}_i(\widehat{x}) = q_2 \, e_i(x),$$ where
$\left(\{e_j(x)\}_{j=1}^n, \{\hat{e}_i(\widehat{x})\}_{i=1}^n \right)$ is any basis satisfying \eqref{QandFFcor}.

Furthermore, let $\left(\{f_j(x)\}_{j=1}^n, \{\hat{f}_i(\widehat{x})\}_{i=1}^n \right) \in (F \times \widehat{F})|_{(x,\widehat{x})}$ be any other pair of frames and the matrix representation $A = \left( a_{ij} \right)_{i,j=1}^n$, of an isometry $q$ is given by
$$A = \left( a_{ij} \right)_{i,j=1}^n= \Big(\hat{f}_i^*(\widehat{x})\, q \, f_j(x) \Big)_{i,j=1}^n := \left(\Big\langle \hat{f}_i(\widehat{x}), q \, f_j(x) \Big\rangle \right)_{i,j=1}^n \in \SO(n),$$ where $\hat{f}_i^*$ stands for the 1-form dual to the vector field $\hat{f}_i$.
Then, in this basis,
$$\Big(\hat{f}_i^*(\widehat{x})\, q_1 \, f_j(x) \Big)_{i,j=1}^n = AA_0, \quad \text{and} \quad \Big(\hat{f}_i^*(\widehat{x})\, q_2 \, f_j(x) \Big)_{i,j=1}^n = A_0A.$$

Since $Q$ is a principal $\SO(n)$-bundle over $M \times \widehat{M}$, it has dimension $\frac{n(n+3)}{2}$ as a manifold.

\subsection{Agrachev-Sachkov formulation of rolling surfaces} \label{ASsubsection}
A previous definition of a rolling map can be found in \cite{AS}, where only 2-dimensional manifolds imbedded into $\mathbb{R}^3$ are considered. Although it only deals with the imbedded case, the definition of the rolling is intrinsic in the sense that it does not depend on the imbedding.

The configuration space for rolling one surface on another is $Q$, which is now 5-dimensional, since $M$ and $\widehat{M}$ are 2-dimensional. A rolling is then an absolutely continuous curve $q:[0,\tau] \to Q$ satisfying the following: if $x(t)$ and $\widehat{x}(t)$ are the projections of $q(t)$ into $M$ and $\widehat{M}$ then the following two conditions are satisfied:
\begin{itemize}
\item no slip condition: $\dot{\widehat{x}} = q(t) \, \dot{x}(t)$ for almost every $t\in[0,\tau]$;
\item no twist condition: $Z(t)$ is a parallel tangent vector field along $x(t)$ if and only if $q(t) Z(t)$ is a parallel tangent vector field along $\widehat{x}(t)$.
\end{itemize}
Notice that there is no condition corresponding to the normal no-twist, since the manifolds here have codimension 1. In Section \ref{introll} we will show how this definition fits into our Definition~\ref{imbeddef2}.

The no-slip and no-twist conditions can be described by means of a distribution $D$ in the tangent bundle of~$Q$. By distribution, we mean a smooth subbundle of the tangent bundle. Then the ``no slip -- no twist'' condition will correspond to the requirement $\dot{q}(t) \in D_{q(t)}$ for almost every $t$. The distribution $D$ has the following local description. In any sufficiently small neighborhood $U\subset M$ of $y\in M$ we pick a pair of tangent vector field $e_1, e_2$, such that $\{e_1(x), e_2(x) \}$ is a positively oriented orthonormal basis for every $x \in U$. Define $\hat{e}_1, \hat{e}_2$ in a similar way in a sufficiently small neighborhood $\widehat{U}$. Since the rotation group $SO(2)$ has dimension 1, we simply need to know the relative angle $\theta$ to describe $q$ with respect to the frames given by $\{e_1, e_2\}$ and $\{\hat{e}_1, \hat{e}_2\}$. More precisely, $\theta$ is defined by
\begin{eqnarray}
q \, e_1 & = & \cos \theta \hat{e}_1 + \sin \theta \hat{e}_2,\nonumber\\
q \, e_2 & = & -\sin \theta \hat{e}_1 + \cos \theta \hat{e}_2.\nonumber
\end{eqnarray}
Thus, if $\pi:Q \to M\times \widehat{M}$ is the natural projection, then any $q \in \pi^{-1}(U \times \widehat{U})$ is uniquely determined by the coordinates $(x,\widehat{x},\theta), (x,\widehat{x}) \in U \times \widehat{U}$.

Let $c_1,c_2, \widehat{c}_1$ and $\widehat{c}_2$ be the so-called ``structural constants'', defined by the commutation relations
$$[e_1,e_2] = c_1 e_1 + c_2 e_2, \qquad [\hat{e}_1,\hat{e}_2] = \widehat{c}_1 \hat{e}_1 + \widehat{c}_2 \hat{e}_2.$$
Define the vector fields $X_1$ and $X_2$ on $\pi^{-1}(U \times \widehat{U})$ by
\begin{equation} \label{X1AS}
\begin{split}X_1 = e_1 + \cos\theta \hat{e}_1 + \sin\theta \hat{e}_2 + (-c_1 + \widehat{c}_1 \cos\theta + \widehat{c}_2 \sin\theta) \frac{\partial}{\partial \theta} \, , \\
X_2 = e_2 - \sin\theta \hat{e}_1 + \cos\theta \hat{e}_2 + (-c_2 - \widehat{c}_1 \sin\theta + \widehat{c}_2 \cos\theta) \frac{\partial}{\partial \theta} \, .
\end{split}
\end{equation}
Then $D|_{\pi^{-1}(U \times \widehat{U})}$ is spanned by $X_1,X_2$.

The connectivity by a curve tangent to the distribution $D$ is the principal problem. More precisely, given two different states $q_0,q_1 \in Q$, we ask whether there exists a rolling motion $q:[0,\tau] \rightarrow Q$, such that $q(0) = q_0$ and $q(\tau) = q_1$? The advantage of the formulation of no slipping and no twisting conditions in terms of a distribution, is that the question of connectivity may be reformulated through admissible sets or orbits in control theory.

Given a distribution $D$ on an arbitrary manifold $Q$, a curve $q:[0,\tau] \to Q$ is said to be {\it horizontal} (or {\it admissible}) with respect to $D$ if $q$ is an absolutely continuous curve satisfying $\dot{q}(t) \in D$ for almost every $t$. The {\it orbit} of $D$ at a point $q_0$ is the set of all points $q_1 \in Q$ so that there exists a curve $q:[0,\tau] \to Q$, with $q(0) = q_0$ and $q(\tau) = q_1$, which is horizontal with respect to $D$. We denote this set by $\mathcal{O}_{q_0}(D)$. It is clear that if $q_1 \in \mathcal{O}_{q_0}(D)$, then $\mathcal{O}_{q_0}(D) = \mathcal{O}_{q_1}(D)$. The Orbit Theorem \cite{Hermann, Suss} asserts that $\mathcal{O}_{q_0}$ is an immersed submanifold of $Q$ and describes the tangent space of the orbit in terms of
the diffeomorphisms of $Q$. A precise statement using the chronological exponential and a broad discussion
about the Orbit Theorem can be found in Chapter 5 of \cite{AS}.

Also, define the flag associated to the
distribution $D$ inductively by
$$D^1 = D \text{ and } D^{i+1} = D + [D,D^i].$$
We say that $D$ has step $k \geq 2$  at $q$ if $k$ is the maximal integer, so that
$$D^{k-1}_q \subsetneq D_q^k = D_q^{k+1}.$$
If $D^k_q = D_q$ for any integer $k$, we say that $D$ has step 1 at $q$.
The Orbit Theorem then tells us that $D^k_q \subseteq T_q\mathcal{O}_{q_0}(D)$,
where $k$ is the step at $q \in \mathcal{O}_{q_o}(D)$.
In particular, if $Q$ is connected and there is an integer $k$ such that $D^k=TQ$, then $\mathcal{O}_{q_0}(D) = Q$. The previous result is known as the Chow-Rashevski{\u\i} theorem \cite{Chow,Rashevsky} and the distribution $D$ is called bracket generating.

We will use the expression that $D$ has step $k$ if $D$ has step $k$
for any $q \in Q$. Remark that if $D$ is of step $k$, and there is a local basis of vector fields of $D^k$
in a neighborhood around any point in $Q$, then
$$D^k_q = T_q\mathcal{O}_{q_0}(D).$$

We now go back to the intrinsic definition presented in~\cite{AS}, where $Q$ is the described 5-dimensional configuration space and $D$ is spanned locally by \eqref{X1AS}. This definition can be restated as following: a curve $q(t)\colon [0,\tau]\to Q$ is the rolling map of $M$ on $\widehat M$ if it is tangent to $D$. The main result of \cite{AS}, is the following description of orbits of $D$. Let $\varkappa(x)$ and $\widehat{\varkappa}(\widehat{x})$ denote the Gaussian curvature of $M$ at $x$ and of $\widehat{M}$ at $\widehat{x}$, respectively.

\begin{theorem}
For any $q_0 \in Q$, the orbit at $q_0$ satisfies $\dim \mathcal{O}_{q_0}(D) = 2$ if and only if $\varkappa(\pr_M q) - \widehat{\varkappa}(\pr_{\widehat{M}} q) = 0$ for every $q \in \mathcal{O}_{q_0}(D)$. Otherwise, $\dim \mathcal{O}_{q_0}(D) = 5$.
\end{theorem}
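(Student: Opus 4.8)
The plan is to use that $D$ has rank two, so that the whole obstruction to integrability is encoded in the single bracket $[X_1,X_2]$, which I would compute in the local coordinates $(x,\widehat{x},\theta)$ on $\pi^{-1}(U\times\widehat U)$. As a preliminary I would record the classical expression of the Gaussian curvature in terms of the structural constants: writing $\{e^1,e^2\}$ for the coframe dual to $\{e_1,e_2\}$, the Levi-Civita connection form equals $c_1e^1+c_2e^2$, and the structure equations give $\varkappa=e_1(c_2)-e_2(c_1)-c_1^2-c_2^2$ on $U$, and likewise $\widehat{\varkappa}=\hat{e}_1(\widehat{c}_2)-\hat{e}_2(\widehat{c}_1)-\widehat{c}_1^{\,2}-\widehat{c}_2^{\,2}$ on $\widehat U$ (the curvature sign convention is immaterial below).

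Write $X_i=e_i+Y_i+f_i\,\partial_\theta$, with $Y_1=\cos\theta\,\hat e_1+\sin\theta\,\hat e_2$, $Y_2=-\sin\theta\,\hat e_1+\cos\theta\,\hat e_2$, and $f_i$ the $\partial_\theta$-coefficients appearing in \eqref{X1AS}. The key step is to expand $[X_1,X_2]$ using that $e_i$, $\hat e_j$, $\partial_\theta$ commute across the three blocks while $[e_1,e_2]=c_1e_1+c_2e_2$, $[\hat e_1,\hat e_2]=\widehat c_1\hat e_1+\widehat c_2\hat e_2$, together with the identities $\partial_\theta Y_1=Y_2$, $\partial_\theta Y_2=-Y_1$, $\partial_\theta f_1=f_2+c_2$, $\partial_\theta f_2=-(f_1+c_1)$. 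The $M$-component of $[X_1,X_2]$ is at once $c_1e_1+c_2e_2$; the $\widehat M$-component, collecting the contributions of $[Y_1,Y_2]=\widehat c_1\hat e_1+\widehat c_2\hat e_2$, $[Y_1,f_2\partial_\theta]$ and $[f_1\partial_\theta,Y_2]$, collapses via $\cos^2\theta+\sin^2\theta=1$ to $c_1Y_1+c_2Y_2$. Hence $X_3:=[X_1,X_2]-c_1X_1-c_2X_2$ is vertical, $X_3=\lambda\,\partial_\theta$, and computing $\lambda$ — the most laborious part — one finds that the $M$-terms contribute $-\varkappa-c_1^2-c_2^2$, that the cross terms $Y_1(f_2)-Y_2(f_1)$ reduce (again by $\cos^2+\sin^2=1$) to $\hat e_1(\widehat c_2)-\hat e_2(\widehat c_1)=\widehat\varkappa+\widehat c_1^{\,2}+\widehat c_2^{\,2}$, and that the remaining vertical terms $f_1\partial_\theta f_2-f_2\partial_\theta f_1-c_1f_1-c_2f_2$ simplify to $c_1^2+c_2^2-\widehat c_1^{\,2}-\widehat c_2^{\,2}$; all the quadratic terms cancel and
\[ [X_1,X_2]=c_1X_1+c_2X_2+\big(\widehat\varkappa(\pr_{\widehat M}q)-\varkappa(\pr_Mq)\big)\,\partial_\theta . \]

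With this identity the orbit analysis is short. Put $\mathcal O=\mathcal O_{q_0}(D)$ and suppose first that $\varkappa(\pr_Mq)=\widehat\varkappa(\pr_{\widehat M}q)$ for every $q\in\mathcal O$. By the Orbit Theorem, $D_q\subseteq T_q\mathcal O$ for all $q\in\mathcal O$, so $X_1,X_2$ restrict to smooth vector fields on the immersed submanifold $\mathcal O$ spanning a rank-two distribution $D|_{\mathcal O}$; the displayed identity gives $[X_1,X_2]=c_1X_1+c_2X_2$ along $\mathcal O$, so $D|_{\mathcal O}$ is involutive and hence integrable by Frobenius. If $L$ is the maximal integral leaf through $q_0$, then any $D$-horizontal curve issued from $q_0$ stays in $L$, whence $\mathcal O\subseteq L$, while $L$ is reachable from $q_0$ by horizontal curves, whence $L\subseteq\mathcal O$; thus $\mathcal O=L$ and $\dim\mathcal O=2$.

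Conversely, suppose $\varkappa(\pr_Mq_1)\neq\widehat\varkappa(\pr_{\widehat M}q_1)$ for some $q_1\in\mathcal O$, and let $U_+$ be the open set where $\varkappa\circ\pr_M\neq\widehat\varkappa\circ\pr_{\widehat M}$. Over $U_+$ the displayed identity shows that $\partial_\theta$ is a section of $D^2$ (dividing $X_3$ by the nonvanishing coefficient $\widehat\varkappa-\varkappa$); a further short bracket computation then gives $[X_1,\partial_\theta]\equiv -Y_2\equiv e_2$ and $[X_2,\partial_\theta]\equiv Y_1\equiv -e_1$ modulo $D^2$, and since $\{e_1,e_2,\hat e_1,\hat e_2,\partial_\theta\}$ is a frame for $TQ$ on $\pi^{-1}(U\times\widehat U)$ we obtain $D^3=TQ$ on $U_+$. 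By the Orbit Theorem, $T_{q_1}\mathcal O\supseteq D^3_{q_1}=T_{q_1}Q$, so $\mathcal O$ is open and $\dim\mathcal O=5$. The two cases are mutually exclusive and exhaustive, which yields the dichotomy and in particular that $\dim\mathcal O_{q_0}(D)=2$ precisely when $\varkappa-\widehat\varkappa$ vanishes identically on $\mathcal O_{q_0}(D)$. The main obstacle is the bracket computation: organising the trigonometric terms so that every quadratic piece cancels and the coefficient of $\partial_\theta$ collapses to the curvature difference.
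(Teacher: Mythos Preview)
Your proof is correct. The bracket identity
\[
[X_1,X_2]=c_1X_1+c_2X_2+(\widehat\varkappa-\varkappa)\,\partial_\theta
\]
is right (I checked the trigonometric collapse you describe, including the cancellation $-f_1^2-f_2^2-2c_1f_1-2c_2f_2=c_1^2+c_2^2-\widehat c_1^{\,2}-\widehat c_2^{\,2}$), and the orbit dichotomy you draw from it via Frobenius on $\mathcal O$ in one direction and the Orbit Theorem in the other is sound.

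Your route, however, differs from the one the paper actually develops. The paper does not prove the theorem in Section~3.2; it cites it from Agrachev--Sachkov. The computations that stand in for a proof are those of Section~3.3, carried out in the Bryant--Hsu formalism: one works upstairs on $F\times\widehat F$ with the canonical one-forms, sets $\theta_i=\tfrac12(\alpha_i-\widehat\alpha_i)$ and $\theta_3=\tfrac12(\alpha_{21}-\widehat\alpha_{21})$, and uses Cartan's formula $d\theta_3(X,Y)=-\theta_3([X,Y])=\tfrac12(\varkappa-\widehat\varkappa)$ together with $d\theta_1,d\theta_2$ to see that $D$ is involutive precisely when $\varkappa=\widehat\varkappa$ and of Cartan type (step~3, rank growth $2\to3\to5$) otherwise, then pushes everything down by $\pi$ to $Q$. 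So the paper extracts the curvature obstruction from the structure equations of the coframe, while you extract it from a direct bracket computation in the Agrachev--Sachkov coordinates $(x,\widehat x,\theta)$. The two are dual: the paper's method is cleaner because the structure equations package all the cancellations, whereas yours is more elementary (no frame-bundle machinery, no quotient by $\pi$) and has the virtue of producing the explicit formula for $[X_1,X_2]$ on $Q$ rather than a statement modulo $\ker d\pi$. Either way one lands on the same obstruction $\varkappa-\widehat\varkappa$ and the same $2$/$5$ dichotomy.
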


\begin{remark}
In contrast to the definition in~\cite{Sharpe}, the definition in~\cite{AS} deals with absolutely continuous curves. The advantage of this, is the ability to apply the Orbit theorem and the Chow-Rashevski{\u\i} theorem. This was one of the reasons for us to define a rolling map in terms of absolutely continuous curves. Remark that all these theorems also hold if we consider Lipschitz curves instead of absolutely continuous. Hence, we always may interchange ``absolutely continuous'' with ``Lipschitz'' for all considerations in the present paper.
\end{remark}

\subsection{Bryant-Hsu formulation of rolling surfaces}\label{BHsection}
In~\cite{BH} the authors give an intrinsic formulation to the problem of rolling two abstract surfaces $M$ and $\widehat{M}$ with respect to each other. The main tool in this formulation is Cartan's general method of moving frames, that is, determining canonical forms on an appropriate $\SO(2)-$bundle.

Let $M$ and $\widehat{M}$ be two connected oriented Riemannian manifolds of dimension 2. Consider the respective frame bundles $F$, $\widehat{F}$. Then, as discussed in Subsection~\ref{conspace}, the configuration space $Q$ for this kinematic system can be identified with $(F \times \widehat{F})/\SO(2)$. The conditions of no twisting and no slipping can be understood by means of the canonical one-forms $\alpha_1,\alpha_2$, $\alpha_{21}$ on $F$ and $\widehat\alpha_1,\widehat\alpha_2$, $\widehat\alpha_{21}$ on $\widehat{F}$. Recall, that these forms satisfy the structure equations
\vspace{-0.7cm}
\begin{multicols}{2}
\begin{eqnarray}
d\alpha_1&=&\alpha_{21}\wedge\alpha_2,\nonumber\\
d\alpha_2&=&-\alpha_{21}\wedge\alpha_1,\nonumber\\
d\alpha_{21}&=&\varkappa\;\alpha_{1}\wedge\alpha_2,\nonumber
\end{eqnarray}

\begin{eqnarray}
d\widehat\alpha_1&=&\widehat\alpha_{21}\wedge\widehat\alpha_2,\nonumber\\
d\widehat\alpha_2&=&-\widehat\alpha_{21}\wedge\widehat\alpha_1,\nonumber\\
d\widehat\alpha_{21}&=&\widehat\varkappa\;\widehat\alpha_{1}\wedge\widehat\alpha_2,\nonumber
\end{eqnarray}
\end{multicols}
\noindent where $\varkappa$ and $\widehat\varkappa$ are the Gauss curvatures of $M$ and $\widehat{M}$ respectively, see \cite[Chapter 7]{ST}.

The rank two distribution $D$ over $Q$ corresponding to the ``no slip -- no twist'' conditions is the push-forward of the vector fields, solving the Pfaffian equations
\begin{equation}\label{2Dcase}
\alpha_1-\widehat\alpha_1=\alpha_2-\widehat\alpha_2=\alpha_{21}-\widehat\alpha_{21}=0,
\end{equation}
under the natural projection $\pi:F\times \widehat{F} \to Q$. At the points where $\varkappa-\widehat\varkappa\neq 0$ the distribution $D$ is of Cartan type, that is, the distributions
$$D^2=D+[D,D]\quad\mbox{and}\quad D^3=D^2+[D,D^2]$$
have rank 3 and 5 respectively, see~\cite{BH}. This implies that, under the condition $\varkappa-\widehat\varkappa\neq 0$, the distribution $D$ is bracket generating of step 3. To see under which conditions $D$ is of Cartan type, define the following one-forms over the product $F\times\widehat{F}$
\begin{eqnarray*}
\theta_1=\frac12(\alpha_1-\widehat\alpha_1), & \theta_2=\dfrac12(\alpha_2-\widehat\alpha_2), & \theta_3=\frac12(\alpha_{21}-\widehat\alpha_{21}),
\end{eqnarray*}
$$\omega_1=\frac{1}{2}(\alpha_1+\widehat\alpha_1),\quad \omega_2=\frac{1}{2}(\alpha_2+\widehat\alpha_2),$$
and observe that the following identities hold:
\begin{eqnarray}
d\theta_1&=&\theta_3\wedge\omega_2+\frac12(\alpha_{21}+\widehat\alpha_{21})\wedge\theta_2, \nonumber \\
d\theta_2&=&-\theta_3\wedge\omega_1-\frac12(\alpha_{21}+\widehat\alpha_{21})\wedge\theta_1, \nonumber \\
d\theta_3&=&\frac12(\varkappa-\widehat\varkappa)\omega_1\wedge\omega_2+\frac12\Big((\varkappa+\widehat\varkappa)(\omega_1\wedge\theta_2-\omega_2\wedge\theta_1)+(\varkappa-\widehat\varkappa)\theta_1\wedge\theta_2\Big). \nonumber
\end{eqnarray}

Denote by $\mathcal D=\ker\theta_1\cap\ker\theta_2\cap\ker\theta_3$ the space of solutions of the system \eqref{2Dcase} and let $X=(X_1,X_2),Y=(Y_1,Y_2),Z=(Z_1,Z_2)$ be a local basis of $\mathcal D$ chosen such that
$$\begin{array}{cccc}
\alpha_1(X_1)=1,\quad&\alpha_2(X_1)=0,\quad&\widehat\alpha_1(X_2)=1,\quad&\widehat\alpha_2(X_2)=0,\\
\alpha_1(Y_1)=0,\quad&\alpha_2(Y_1)=1,\quad&\widehat\alpha_1(Y_2)=0,\quad&\widehat\alpha_2(Y_2)=1,\\
\alpha_1(Z_1)=0,\quad&\alpha_2(Z_1)=0,\quad&\widehat\alpha_1(Z_2)=0,\quad&\widehat\alpha_2(Z_2)=0.
\end{array}$$

Observe that for a sufficiently small open neighborhood $U\times\widehat U\subset M\times\widehat M$ of $(p,\widehat p)$, the differential of the projection $\pi$ is
$$\begin{array}{ccccc}d_{((p,C),(\widehat p,\widehat C))}\pi&:&T_pU\times\mathfrak{so}(2)\times T_{\widehat p}\widehat U\times\mathfrak{so}(2)&\to&T_pU\times T_{\widehat p}\widehat U\times \mathfrak{so}(2)\\&&(x,A,y,B)&\mapsto&(x,y,A-B)\end{array}$$
for any $C,\widehat C\in\SO(2)$ and where $T_C\SO(2)$, $T_{\widehat C}\SO(2)$ and $T_{C\widehat C^{-1}}\SO(2)$ are identified with $\mathfrak{so}(2)$ in the usual manner. By the construction of the canonical forms on the frame bundles, it is clear that $X,Y\notin\ker d\pi$, whereas it is possible to choose locally $Z$ such that $Z\in\ker d\pi$. Thus since $\ker d\pi$ has dimension one, we have locally
$$\ker d\pi=\spn\{Z\}.$$
This implies that a local description of $D$ is given by
$$D=\spn\{d\pi(X),d\pi(Y)\}.$$

Recall Cartan's formula for a differential one form $\eta$ and any two local vector fields $v,w$, given by
\begin{equation*}\label{cartandif}
d\eta(v,w)=v(\eta(w))-w(\eta(v))-\eta([v,w]).
\end{equation*}
In our case, the previous equation implies the following equalities
\begin{eqnarray}
d\theta_1(X,Y)&=&-\theta_1([X,Y])=0, \nonumber\\
d\theta_1(X,Z)&=&-\theta_1([X,Z])=0, \nonumber\\
d\theta_1(Y,Z)&=&-\theta_1([Y,Z])=0, \nonumber\\
d\theta_2(X,Y)&=&-\theta_2([X,Y])=0, \nonumber\\
d\theta_2(X,Z)&=&-\theta_2([X,Z])=0, \nonumber\\
d\theta_2(Y,Z)&=&-\theta_2([Y,Z])=0, \nonumber\\
d\theta_3(X,Y)&=&-\theta_3([X,Y])=\frac12(\varkappa-\widehat\varkappa), \nonumber\\
d\theta_3(X,Z)&=&-\theta_3([X,Z])=0, \nonumber\\
d\theta_3(Y,Z)&=&-\theta_3([Y,Z])=0, \nonumber
\end{eqnarray}
It follows from these equations, that $[X,Z]$, $[Y,Z]$ belong to $\mathcal D$ and
$[X,Y]\notin\mathcal D$ if and only if the difference of curvatures $\varkappa-\widehat\varkappa$ does not vanish identically. In fact, counting dimensions, we see that $\spn\{X,Y,Z,[X,Y]\}=\ker\theta_1\cap\ker\theta_2$. It is clear from the choice of $Z$ that $[X,Y]\notin\ker d\pi$ since if $[X,Y]=kZ$ for some $k\in\mathbb{R}$, then $d\theta_3(X,Y)=-k\theta_3(Z)=0$ which contradicts our assumption. This implies that $\spn\{d\pi(X),d\pi(Y),d\pi([X,Y])\}=D_1$ is a distribution of rank 3. Analogously we obtain
\begin{eqnarray}
d\theta_1([X,Y],X)&=&-\theta_1([[X,Y],X])=0, \nonumber\\
d\theta_1([X,Y],Y)&=&-\theta_1([[X,Y],Y])=\theta_3([X,Y]), \nonumber\\
d\theta_2([X,Y],X)&=&-\theta_2([[X,Y],X])=-\theta_3([X,Y]), \nonumber\\
d\theta_2([X,Y],Y)&=&0. \nonumber
\end{eqnarray}
By similar considerations, we can see that
$$\spn\{X,Y,Z,[X,Y],[[X,Y],X],[[X,Y],Y]\}=T(F\times\widehat F),$$
which implies that
$$\spn\{d\pi(X),d\pi(Y),d\pi([X,Y]),d\pi([[X,Y],X]),d\pi([[X,Y],Y])\}=D_2,$$
is a distribution of rank 5.

These calculations imply that $D$ is of Cartan type whenever $\varkappa-\widehat\varkappa$ does not vanish identically. Since the configuration space $Q$ is 5-dimensional, the distribution $D$ is bracket generating and thus, by the Chow-Rashevski{\u\i} theorem we can completely solve the connectivity problem. In the case when $\varkappa=\widehat\varkappa$, the distribution $D$ is integrable and therefore $Q$ is foliated by submanifolds of dimension 2.

It is mentioned in \cite{BH}, that their construction does not depend on imbedding into Euclidean space, however no attempts are made to compare this definition to the one for imbedded manifolds.

We present a simple example, illustrating the above mentioned approach.

\begin{example}\label{2Dexample}
Let us consider the problem of the two dimensional sphere $S^2$ rolling over the Euclidean plane $\real^2$. We can embed these surfaces in the three dimensional Euclidean space $\real^3$ via the parameterizations
$$S^2=\{(\cos\theta\cos\varphi,\sin\theta\cos\varphi,\sin\varphi):-\pi<\theta\leq\pi,-\frac\pi2<\varphi\leq \frac\pi2\},$$
$$\real^2=\{(x,y,0):x,y\in\real\}.$$

It follows from straightforward computations that, in this case, we have
\begin{eqnarray}
\alpha_1=\cos\varphi d\theta, & \alpha_2=d\varphi, & \alpha_{21}=\sin\varphi d\theta; \nonumber\\
\widehat\alpha_1=dx, & \widehat\alpha_2=dy, & \widehat\alpha_{21}=0. \nonumber
\end{eqnarray}
Thus, equations~\eqref{2Dcase} take the form
\begin{equation*}\label{2Dsphere}
\cos\varphi d\theta-dx=d\varphi-dy=\sin\varphi d\theta=0.
\end{equation*}
It is easy to see that
$$d\alpha_{21}=\cos\varphi d\theta\wedge d\varphi=\alpha_1\wedge\alpha_2,\quad\quad d\widehat\alpha_{21}=0,$$
from which it follows that $\varkappa=1$ and $\widehat\varkappa=0$. Since the difference of the Gaussian curvatures does not vanish identically, we obtain the well-known result that it is always possible to achieve any configuration from a given one by rolling the sphere over the plane without slipping or twisting.
\end{example}

\section{Intrinsic rolling} \label{introll}

\subsection{Reformulation of the rolling motion in terms of bundles}

Both formulations of rolling maps given in \cite{AS}  and \cite{BH} only use the configuration space as a manifold of isometries of tangent spaces of $M$ and $\widehat{M}$, without taking into account the imbedding into an ambient space. However, neither of these descriptions attempts to give any justifications for why the ambient space may be ignored, nor do they attempt to compare the intrinsic definition and the extrinsic definition given for imbedded manifolds in~\cite{Sharpe}. We would like to find a reformulation of Definition~\ref{imbeddef2} in such a way that the conditions (i')-(vi') are stated both in terms of intrinsic conditions given on $Q$ and some additional conditions given on another bundle, that carries the information on imbedding.

The conditions imposed over a rolling $(x,g)$ by Definitions~1 and~2 are nontrivial in normal directions for the imbedding of the manifolds with codimension $\nu$ greater than 1. So, it is natural to suppose that the total configuration space of the rolling dynamics will have a normal component which will takes care of the action of $g$ on the normal bundle. Therefore, we make the following analogue construction, as we did for $Q$, in order to construct a principal bundle over $M\times \widehat{M}$ of isometries of the normal tangent space. We start from a pair of imbeddings $\iota: M \rightarrow \mathbb{R}^{n + \nu}$ and $\widehat{\iota}: \widehat{M} \rightarrow \mathbb{R}^{n + \nu}$, given as initial data. Let $\Phi$ be the principal $\SO(\nu)-$bundle over $M$, such that the fiber over a point $x \in M$ consists of all positively oriented orthonormal frames $\{\epsilon_\lambda(x) \}_{\lambda =1}^\nu$ spanning~$T_xM^\perp$. Let $\widehat{\Phi}$ be the principal $\SO(\nu)-$bundle similarly defined on~$\widehat{M}$. Likewise we did in Section~\ref{conspace}, identifying $(F \times \widehat{F})/\SO(n)$ with
\begin{equation} \label{Qdef}
Q = \left\{ \left. q \in \Isom_0^+(T_xM, T_{\widehat{x}} \widehat{M})
 \right| x \in M, \widehat{x} \in \widehat{M}  \right\},
\end{equation}
we identify $(\Phi \times \widehat{\Phi})/\SO(\nu)$ with
\begin{equation} \label{Pdef}
P_{\iota,\widehat{\iota}} := \left\{ \left. p \in \Isom_0^+(T_xM^\bot, T_{\widehat{x}} \widehat{M}^\bot)
\right| x \in M, \widehat{x} \in \widehat{M}  \right\}.
\end{equation}
The space $P_{\iota,\widehat{\iota}}$ is a principal $\SO(\nu)-$bundle over $M \times \widehat{M}$. The dimension of $P_{\iota,\widehat{\iota}}$ is $2n + \frac{\nu (\nu -1)}{2}$. The left and right actions of $\SO(\nu)$ on the fibers are defined by the corresponding actions of $\SO(\nu)$ on $\widehat{\Phi}$ and $\Phi$, respectively, in a similar way to the left and right action on $Q$ described in Section~\ref{conspace}. We notice and reflect it in notations that $Q$ is invariant of imbeddings, while $P_{\iota,\widehat{\iota}}$ is not.

\begin{proposition} \label{Prop gtoqp}
If a curve $(x,g): [0,\tau] \rightarrow M \times \Isom^+(\mathbb{R}^{n+\nu})$ satisfies (i')-(vi') of Definition~\ref{imbeddef2},    then the mapping
$$t \mapsto (dg(t)|_{T_{x(t)}M},dg(t)|_{T_{x(t)}M^\bot}) = : \left(q(t),p(t) \right), $$
defines a curve in $Q \oplus P_{\iota,\widehat{\iota}}$ with the following properties:
\begin{itemize}
\item[(I)] no slip condition: $\dot{\widehat{x}}(t) = q(t) \dot{x}(t)$ for almost every $t$.
\item[(II)] no twist condition (tangential part):
$q(t) \frac{D}{dt} Z(t) = \frac{D}{dt} q(t) Z(t)$ for any tangent vector field
$Z(t)$ along $x(t)$ and almost every $t$.
\item[(III)] no twist condition (normal part):
$p(t) \frac{D^\bot}{dt} \Psi(t) = \frac{D^\bot}{dt} p(t) \Psi(t)$ for any normal vector field $\Psi(t)$
along $x(t)$ and almost every $t$.
\end{itemize}

Conversely, if $(q,p):[0,\tau] \to Q \oplus P_{\iota,\widehat{\iota}}$ is an absolutely continuous curve  satisfying (I)-(III), then there exists a unique rolling $(x,g):[0,\tau] \to M \times \Isom^+(\real^{n+\nu})$, such that $dg(t)|_{T_{x(t)} M} =q(t)$ and $dg(t)|_{T_{x(t)} M^\bot} =p(t)$.
\end{proposition}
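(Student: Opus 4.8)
The plan is to handle the two implications separately, with the common observation that an orientation-preserving isometry of $\real^{n+\nu}$ is affine, $g(t):\bar x\mapsto\bar A(t)\bar x+\bar r(t)$ with $\bar A(t)\in\SO(n+\nu)$, so that $dg(t)=\bar A(t)$ is the \emph{same} linear map at every point. Combined with the splitting \eqref{split} at $x(t)$, the pair $\big(dg(t)|_{T_{x(t)}M},dg(t)|_{T_{x(t)}M^\bot}\big)$ encodes exactly the same data as $\bar A(t)$, while $\bar r(t)$ is recovered from $g(t)x(t)=\widehat x(t)$; almost everything then reduces to translating conditions (i')--(vi') of Definition \ref{imbeddef2} back and forth.

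\textbf{From a rolling to a curve.} First I would verify that $t\mapsto(q(t),p(t))$ takes values in $Q\oplus P_{\iota,\widehat\iota}$. By (ii') the map $q(t)=dg(t)|_{T_{x(t)}M}$ is a linear isometry of $T_{x(t)}M$ onto $T_{\widehat x(t)}\widehat M$, and by (vi') it is orientation preserving, so $q(t)\in Q$. By the first item of the remark following Lemma \ref{equiv1}, (ii') also gives $dg(t)T_{x(t)}M^\bot=T_{\widehat x(t)}\widehat M^\bot$, so $p(t)$ is a linear isometry of the normal spaces; that it is orientation preserving follows from $\bar A(t)\in\SO(n+\nu)$ together with the convention that the orientation of $T_xM^\bot$ (resp. $T_{\widehat x}\widehat M^\bot$) is the one for which a positively oriented frame of $T_xM$ followed by a positively oriented frame of $T_xM^\bot$ is a positively oriented frame of $\real^{n+\nu}$; hence $p(t)\in P_{\iota,\widehat\iota}$. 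Absolute continuity of $(q,p)$ is then checked in a local trivialization: expressing $q(t)$ and $p(t)$ in smooth local orthonormal frames for $TM$, $TM^\bot$, $T\widehat M$, $T\widehat M^\bot$ and using that $x$, $\widehat x$ and $\bar A$ are absolutely continuous makes all matrix entries absolutely continuous. Finally, properties (I), (II), (III) are precisely conditions (iii'), (iv'), (v') with $dg(t)$ replaced by $q(t)$ on tangent data and by $p(t)$ on normal data, which is legitimate because $\dot x(t)$, $\frac{D}{dt}Z(t)$, $Z(t)$ are tangent to $M$ whereas $\frac{D^\bot}{dt}\Psi(t)$ and $\Psi(t)$ are normal.

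\textbf{From a curve to a rolling.} Given $(q,p)$, set $x(t)=\pr_M q(t)$ and $\widehat x(t)=\pr_{\widehat M}q(t)$; these agree with the projections of $p(t)$ because $Q\oplus P_{\iota,\widehat\iota}$ is a bundle over $M\times\widehat M$, and both are absolutely continuous. Using $\real^{n+\nu}=T_{x(t)}M\oplus T_{x(t)}M^\bot$ I define $\bar A(t):=q(t)\oplus p(t)$, i.e. $\bar A(t)v=q(t)v^\top+p(t)v^\bot$. Since $q(t)$ and $p(t)$ are isometries onto the mutually orthogonal subspaces $T_{\widehat x(t)}\widehat M$ and $T_{\widehat x(t)}\widehat M^\bot$, the map $\bar A(t)$ is an isometry of $\real^{n+\nu}$, and it lies in $\SO(n+\nu)$ by the same orientation conventions as above. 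Put $\bar r(t):=\widehat x(t)-\bar A(t)x(t)$ and $g(t):\bar x\mapsto\bar A(t)\bar x+\bar r(t)$; the same trivialization argument as before shows $\bar A(t)$, hence $g(t)$, is absolutely continuous. By construction $g(t)x(t)=\widehat x(t)\in\widehat M$ and $dg(t)=\bar A(t)$ restricts to $q(t)$ on $T_{x(t)}M$ and to $p(t)$ on $T_{x(t)}M^\bot$, so (i'), (ii'), (vi') are immediate and (iii'), (iv'), (v') follow by reading (I), (II), (III) backwards; thus $(x,g)$ is a rolling. For uniqueness, any rolling $(x,g)$ with $dg(t)|_{T_{x(t)}M}=q(t)$ and $dg(t)|_{T_{x(t)}M^\bot}=p(t)$ has the same projections $x(t),\widehat x(t)$, its linear part must agree with $q(t)\oplus p(t)$ on all of $\real^{n+\nu}$ by the splitting, and its translation part is then forced by $g(t)x(t)=\widehat x(t)$.

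\textbf{Main obstacle.} The only steps that are not purely formal are the orientation bookkeeping for the normal factor --- fixing once and for all the induced orientations on $TM^\bot$ and $T\widehat M^\bot$ so that ``$g$ orientation preserving'' is equivalent to ``$q$ and $p$ both orientation preserving'' --- and the check that passage between the affine data $(\bar A,\bar r)$ and the bundle data $(q,p)$ preserves absolute continuity, which is a routine computation in local trivializations. Everything else is a direct translation via Lemma \ref{equiv1} and the remark following it.
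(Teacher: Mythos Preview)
Your proof is correct and follows essentially the same approach as the paper: both directions rest on writing $g(t)$ as an affine map $\bar x\mapsto \bar A(t)\bar x+\bar r(t)$, identifying $\bar A(t)$ with $q(t)\oplus p(t)$ via the splitting \eqref{split}, and recovering $\bar r(t)$ from the contact condition $g(t)x(t)=\widehat x(t)$. Your write-up is in fact more thorough than the paper's, which omits the absolute-continuity check in local trivializations and leaves the uniqueness claim implicit in the construction; your explicit treatment of both points, and of the orientation bookkeeping on the normal factor, fills gaps the paper glosses over.
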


\begin{proof}
Assume that $(x,g):[0,\tau] \to M \times \Isom^+(\real^{n+\nu})$ is a rolling map satisfying (i')-(vi').
The statements (i') and (ii') assure  that
\begin{equation}\label{orient}
\begin{split}
& dg(t)|_{T_{x(t)} M}  \in \Isom(T_{x(t)} M,T_{\widehat{x}(t)} \widehat{M}) \quad \text{and}\\
& dg(t)|_{T_{x(t)} M^\bot}  \in \Isom(T_{x(t)} M^\bot,T_{\widehat{x}(t)} \widehat{M}^\bot).
\end{split}
\end{equation}
Since $dg(t)$ must be orientation preserving in $\mathbb R^{n+\nu}$ we conclude that both of the mappings~\eqref{orient} are either orientation reversing or orientation preserving. The additional requirement (vi') implies that $(q,p)$ is orientation preserving. The conditions (I)-(III) correspond to the conditions (iii')-(v').

Conversely, if we have a curve $(q(t),p(t))$ in $Q \oplus P_{\iota,\widehat{\iota}}$ with projection $(x(t),\widehat{x}(t))$ into the product manifold $M \times \widehat{M}$, then we may construct the isomorphism $g \in \Isom^+(\real^{n+\nu})$ in the following way. We write $g(t): \bar{x} \mapsto \bar{A}(t) \bar{x} + \bar{r}(t), \bar{A}(t) \in \SO(n+\nu)$, where $\bar A(t)=dg(t)$ is determined by the conditions
$$dg(t)|_{T_{x(t)} M} =q(t)|_{T_{x(t)} M}, \qquad  dg(t)|_{T_{x(t)} M^\bot} = p(t)|_{T_{x(t)} M^\bot}.$$ Then
$$\im dg(t)|_{T_{x(t)} M} =T_{\widehat{x}(t)} \widehat{M}, \qquad \im dg(t)|_{T_{x(t)} M^\bot} = T_{\widehat{x}(t)} \widehat{M}^\bot.$$
The vector $\bar{r}(t)$ is determined by $\bar{r}(t) = \widehat{x}(t) - A(t) x(t)$.
\end{proof}

The one-to-one correspondence between rolling maps and absolutely continuous curves in $Q \oplus P_{\iota,\widehat{\iota}}$, satisfying (I)-(III), naturally leads to a definition of a rolling map in terms of these bundles.

\begin{definition} \label{imbeddef3}
A rolling of $M$ on $\widehat{M}$ without slipping or twisting is an absolutely continuous curve $(q,p):[0,\tau] \to Q \oplus
P_{\iota,\widehat{\iota}}$ such that $(q(t),p(t))$ satisfies
\begin{itemize}
\item[(I)] no slip condition: $\dot{\widehat{x}}(t) = q(t) \dot{x}(t)$ for almost every $t$,
\item[(II)] no twist condition (tangential part):
$q(t) \frac{D}{dt} Z(t) = \frac{D}{dt} q(t) Z(t)$
for any tangent vector field $Z(t)$ along $x(t)$ and almost every $t$,
\item[(III)] no twist condition (normal part): $p(t) \frac{D^\bot}{dt} \Psi(t) =
\frac{D^\bot}{dt} p(t) \Psi(t)$ for any normal vector field $\Psi(t)$ along $x(t)$ and almost every $t$.
\end{itemize}
\end{definition}

A purely intrinsic definition of a rolling is deduced from Definition~\ref{imbeddef3}, by restricting it to the bundle~$Q$. This concept naturally generalizes the definition given in \cite{AS} for 2-dimensional Riemannian manifolds imbedded into~$\mathbb{R}^3$ and we use the term {\it intrinsic rolling} for this object.

\begin{definition} \label{intrinsdef}
An intrinsic rolling of two $n$-dimensional oriented Riemannian manifolds $M$ on $\widehat{M}$ without slipping or twisting is an absolutely continuous curve $q:[0,\tau] \rightarrow Q$, satisfying the following conditions: if $x(t) = \pr_M q(t)$ and \ $\widehat{x}(t) = \pr_{\widehat{M}} q(t)$, then
\item[(I')] no slip condition: $\dot{\widehat{x}}(t) = q(t) \dot{x}(t)$ for almost all $t$,
\item[(II')] no twist condition: $Z(t)$ is a parallel tangent vector field along $x(t)$, if and only if $q(t) Z(t)$ is parallel along $\widehat{x}(t)$ for almost all $t$.
\end{definition}

\subsection{Rolling versus intrinsic rolling along the same curves}
Suppose that the projection of a rolling map into $M \times \widehat{M}$ is a fixed pair of curves. Questions that naturally arise are:
\begin{itemize}
\item If $(q_1(t), p_1(t))$ and $(q_2(t), p_2(t))$ are two rollings of $M$ on $\widehat{M}$, along $x(t)$ and
$\widehat{x}(t)$, how do they relate to one another? How many of the properties of the rolling are fixed by choosing paths?
\item Suppose that an intrinsic rolling $q(t)$ and two imbeddings, $\iota:M \to \real^{n+\nu}$ and $\widehat{\iota}:
\widehat{M} \to \real^{n+\nu}$, are given. When can the intrinsic rolling $q(t)$ be extended to a rolling $(q(t),p(t))$? Is this extension unique?
\end{itemize}
Before we start working with this, let us consider the following simple example, where the different imbeddings are easy to picture.

\begin{example} \label{S1onR}
Let us consider $\widehat{M} = \real$, with the usual Euclidean structure, and $M = S^1$, with the subspace metric, when considered as the unit circle in $\real^2$, with positive orientation counterclockwise. Let $x:[0,\tau] \to S^1$ be written as $x(t) = e^{i\varphi(t)}$, $\varphi:[0,\tau] \to \mathbb{R}$ being an absolutely continuous function. Since $\SO(1)$ is just the trivial group, $Q \cong M \times \widehat{M}$. It is clear from the no-slipping condition that
$$\widehat{x}(t) = \widehat{x}(0) + \varphi(t) - \varphi(0).$$
Without loss of of generality, we may assume $\widehat{x}(0) = \varphi(0) = 0$. We consider the possible rollings under different imbeddings. In the following cases, $e_1$ and $\hat{e}_1$ will always be positively oriented unit basis vectors for $TM$ and $T\widehat{M}$ respectively (when they are seen as sub-bundles of $T\real^{1+\nu}$ restricted to either $M$ or $\widehat{M}$), while $\{ \epsilon_\lambda \}_{\lambda = 1}^\nu$ and $\{ \hat{\epsilon}_\kappa \}_{\kappa = 1}^\nu$  are positively oriented bases of $TM^\perp$ and $T\widehat{M}^\perp$.
The coordinates of $\real^{1+\nu}$ will be denoted by $(\bar{x}_1, \dots, \bar{x}_n)$.
\begin{itemize}
\item[\it Case 1:] Let us consider the simplest example, with
$$\iota_1:M \to \real^2, \qquad \iota_1: e^{i\varphi} \mapsto (\sin \varphi, 1 - \cos \varphi),$$
$$\widehat{\iota}_1: \widehat{M} \to \real^2, \qquad \widehat{\iota}_1: \widehat{x} \mapsto (\widehat{x}, 0).$$
Then
$$e_1(e^{i\varphi}) = \cos \varphi \frac{\partial}{\partial \bar{x}_1}(\iota_1(e^{i\varphi})) +
\sin \varphi \frac{\partial}{\partial \bar{x}_2}(\iota_1(e^{i\varphi})),$$
$$\epsilon_1(e^{i\varphi}) = - \sin \varphi \frac{\partial}{\partial \bar{x}_1}(\iota_1(e^{i\varphi})) +
\cos \varphi \frac{\partial}{\partial \bar{x}_2}(\iota_1(e^{i\varphi})),$$
$$\hat{e}_1(\widehat{x}) = \frac{\partial}{\partial \bar{x}_1}(\widehat{\iota}_1(\widehat{x})) ,
\qquad \hat{\epsilon}_1(\widehat{x}) = \frac{\partial}{\partial \bar{x}_2}(\widehat{\iota}_1(\widehat{x})).$$
Here, also $\SO(\nu)$ is trivial, so there is so there is only one way to roll.

\item[\it Case 2:] We  do the same imbeddings as above, only increasing the codimension by one.
$$\iota_2:M \to \real^3, \qquad \iota_2: e^{i\varphi} \mapsto (\sin \varphi, 1 - \cos \varphi, 0),$$
$$\widehat{\iota}_2: \widehat{M} \to \real^3, \qquad \widehat{\iota}_2: \widehat{x} \mapsto (\widehat{x}, 0, 0).$$
Then
$$e_1(e^{i\varphi}) = \cos \varphi \frac{\partial}{\partial \bar{x}_1}(\iota_2(e^{i\varphi})) +
\sin \varphi \frac{\partial}{\partial \bar{x}_2}(\iota_2(e^{i\varphi}),$$
$$\epsilon_1(e^{i\varphi}) = - \sin \varphi \frac{\partial}{\partial \bar{x}_1}(\iota_2(e^{i\varphi})) +
\cos \varphi \frac{\partial}{\partial \bar{x}_2}(\iota_2(e^{i\varphi})), $$
$$\epsilon_2(e^{i\varphi}) = \frac{\partial}{\partial \bar{x}_3}(\iota_2(e^{i\varphi})),$$
$$\hat{e}_1(\widehat{x}) = \frac{\partial}{\partial \bar{x}_1}(\widehat{\iota}_2(\widehat{x})),
\qquad \hat{\epsilon}_1(\widehat{x}) = \frac{\partial}{\partial \bar{x}_2}(\widehat{\iota}_2(\widehat{x})),
\qquad \hat{\epsilon}_2(\widehat{x}) = \frac{\partial}{\partial \bar{x}_3}(\widehat{\iota}_2(\widehat{x})).$$
Now we know that the matrix representation $B$ of $p(t)$ with respect to the bases $\{ e_\lambda \}_{\lambda = 1}^\nu$ and $\{ \hat{e}_\kappa \}_{\kappa = 1}^\nu$, can be represented as
$$B = \left(\begin{array}{cc} \left\langle \hat{e}_1 , p(t) e_1 \right\rangle & \left\langle\hat{e}_1 , p(t) e_2 \right\rangle \\
\left\langle \hat{e}_2 , p(t) e_1 \right\rangle & \left\langle \hat{e}_2 , p(t) e_2 \right\rangle\end{array} \right)
= \left(\begin{array}{cc} \cos \theta(t) & \sin \theta(t) \\
- \sin \theta(t) & \cos \theta(t) \end{array} \right) \in \SO(2).$$
We calculate the restrictions of $\theta(t)$ given by (III).
$$p(t) \frac{D}{dt} \epsilon_1(x(t)) = p(t) \nabla^\perp_{\dot{x}(t)} \epsilon_1 = 0 = \frac{D^\perp}{dt} p(t) \epsilon_1(x(t))$$
$$= - \dot{\theta}(t) (\sin \theta(t) \hat{\epsilon}_1 + \cos \theta(t) \hat{\epsilon}_1)
+ \cos \theta(t) \nabla^{\perp}_{\dot{\widehat{x}}(t)} \hat{\epsilon}_1 - \sin \theta(t) \nabla^{\perp}_{\dot{\widehat{x}}(t)} \hat{\epsilon}_2$$
$$= - \dot{\theta}(t) (\sin \theta(t) \hat{\epsilon}_1 + \cos \theta(t) \hat{\epsilon}_2),$$
for almost every $t$, so $\theta(t)$ is a constant.

\item[\it Case 3:] We continue with $\nu = 2$, but change the imbedding of $\widehat{M}$
to a spiral.
$$\iota_2:M \to \real^3, \qquad \iota_2: e^{i\varphi} \mapsto (\sin \varphi, 1 - \cos \varphi, 0),$$
$$\widehat{\iota}_3: \widehat{M} \to \real^3, \qquad \iota_3: \widehat{x} \mapsto
\frac{1}{\sqrt{2}} (\cos \widehat{x}, \sin \widehat{x}, \widehat{x}).$$
Then
$$e_1(e^{i\varphi}) = \cos \varphi \frac{\partial}{\partial \bar{x}_1}(\iota_2(e^{i\varphi})) +
\sin \varphi \frac{\partial}{\partial \bar{x}_2}(\iota_2(e^{i\varphi}),$$
$$\epsilon_1(e^{i\varphi}) = - \sin \varphi \frac{\partial}{\partial \bar{x}_1}(\iota_2(e^{i\varphi})) +
\cos \varphi \frac{\partial}{\partial \bar{x}_2}(\iota_2(e^{i\varphi})), $$
$$\epsilon_2(e^{i\varphi}) = \frac{\partial}{\partial \bar{x}_3}(\iota_2(e^{i\varphi})),$$
$$\hat{e}_1(\widehat{x}) = \frac{1}{\sqrt{2}} \left(- \sin \widehat{x} \frac{\partial}{\partial \bar{x}_1}(\widehat{\iota}_2(\widehat{x})) + \cos \widehat{x} \frac{\partial}{\partial \bar{x}_2}(\widehat{\iota}_3(\widehat{x}))
+ \frac{\partial}{\partial \bar{x}_3}(\widehat{\iota}_3(\widehat{x}))
\right),$$
$$\hat{\epsilon}_1(\widehat{x}) = \frac{1}{\sqrt{2}} \left(- \sin \widehat{x} \frac{\partial}{\partial \bar{x}_1}(\widehat{\iota}_2(\widehat{x})) + \cos \widehat{x} \frac{\partial}{\partial \bar{x}_2}(\widehat{\iota}_3(\widehat{x}))
- \frac{\partial}{\partial \bar{x}_3}(\widehat{\iota}_3(\widehat{x}))
\right),$$
$$\hat{\epsilon}_2(\widehat{x}) = -\cos \widehat{x} \frac{\partial}{\partial \bar{x}_1}(\widehat{\iota}_2(\widehat{x})) - \sin \widehat{x} \frac{\partial}{\partial \bar{x}_2}(\widehat{\iota}_3(\widehat{x})).$$
We have the same matrix representation of $p(t)$,
$$B = \left(\begin{array}{cc} \left\langle \hat{e}_1 , p(t) e_1 \right\rangle & \left\langle\hat{e}_1 , p(t) e_2 \right\rangle \\
\left\langle \hat{e}_2 , p(t) e_1 \right\rangle & \left\langle \hat{e}_2 , p(t) e_2 \right\rangle\end{array} \right)
= \left(\begin{array}{cc} \cos \theta(t) & \sin \theta(t) \\
- \sin \theta(t) & \cos \theta(t) \end{array} \right) \in \SO(2).$$
We calculate the restrictions of $\theta(t)$ given by (III).
$$p(t) \nabla^\perp_{\dot{x}(t)} \epsilon_1 = 0 = \frac{D^\perp}{dt} p(t) \epsilon_1$$
$$= - \dot{\theta}(t) (\sin \theta(t) \hat{\epsilon}_1 + \cos \theta(t) \hat{\epsilon}_1)
+ \cos \theta(t) \nabla^{\perp}_{\dot{\widehat{x}}(t)} \hat{\epsilon}_1 - \sin \theta(t) \nabla^{\perp}_{\dot{\widehat{x}}(t)} \hat{\epsilon}_2$$
$$= \left(\frac{\dot{\widehat{x}}(t)}{\sqrt{2}}- \dot{\theta}(t) \right)
 (\sin \theta(t) \hat{\epsilon}_1 + \cos \theta(t) \hat{\epsilon}_2),$$
so $\theta(t) = \theta_0 + \frac{1}{\sqrt{2}}\widehat{x}(t)$. So now, the circle $M$ will rotate along the spiral $\widehat{M}$,
but its path is determined by the initial angle. Notice also that if we define a new orthonormal frame of
$T\widehat{M}^\perp$ by
$$\widehat{\Upsilon}_1 = \cos \left(\frac{\widehat{x}}{\sqrt{2}} \right) \hat{\epsilon}_1 - \sin
\left(\frac{\widehat{x}}{\sqrt{2}} \right) \hat{\epsilon}_2,$$
$$\widehat{\Upsilon}_2 = \sin \left(\frac{\widehat{x}}{\sqrt{2}} \right) \hat{\epsilon}_1
+ \cos \left(\frac{\widehat{x}}{\sqrt{2}} \right) \hat{\epsilon}_2,$$
then $p(t)$ becomes a constant matrix with respect to the bases $\epsilon_1, \epsilon_2$ and
$\widehat{\Upsilon}_1, \widehat{\Upsilon}_2$.
\end{itemize}
We see that for cases above, the intrinsic rolling $t \mapsto (e^{i\varphi(t)}, \varphi(t))$ either uniquely induces a rolling, or the rolling is determined by an initial configuration of the normal tangent spaces given by $\theta(0) = \theta_0$. Note also that we are able to find a choice of bases so that $p(t)$ is constant with respect to this basis. Notice that these bases consist of normal parallel vector fields.
\end{example}


We continue to work with oriented manifolds $M$ and $\widehat{M}$ imbedded in $\mathbb{R}^{n+\nu}$ and containing curves $x(t)$ and $\widehat{x}(t)$, respectively. In the remaining of this section we will use the following notations: $\{e_j(t)\}_{j =1}^n$ will be a collection of parallel tangent vector fields along $x(t)$ that  forms an orthonormal basis for $T_{x(t)}M$ at each point of $M$, $\{\epsilon_\lambda(t)\}_{\lambda =1}^\nu$ will be a collection of normal parallel vector fields along $x(t)$ forming an orthonormal basis for $T_{x(t)}M^\bot$. We know that we can construct such vector fields by parallel transport and normal parallel transport along $x(t)$. Parallel frames $\{\hat{e}_i\}_{i =1}^n$ and $\{\hat{\epsilon}_\kappa\}_{\kappa =1}^\nu$ will be defined similarly along $\widehat{x}(t)$. Recall that Latin indices $i,j,\dots$ always go from $1$ to $n$, while Greek ones $\kappa,\lambda,\dots$ vary from 1 to $\nu$.

The following lemma reflects that a rolling map preserves parallel vector fields. Namely, the image of a parallel frame over $M$ has constant coordinates in a parallel frame over $\widehat M$.
\begin{lemma} \label{constantM}
A curve $(q(t),p(t))$ in $Q \oplus P_{\iota,\widehat{\iota}}$ in the fibers over $(x(t),\widehat{x}(t))$, satisfies (II) and (III) if and only if the matrices $A(t) = (a_{ij}(t))$ and $B(t) = (b_{\kappa\lambda}(t))$, defined by
$$a_{ij}(t) = \hat{e}_i^*(t) q(t) e_j(t), \qquad b_{\kappa \lambda}(t) = \hat{\epsilon}_\kappa^*(t) p(t) \epsilon_\lambda(t),$$
are constant.
\end{lemma}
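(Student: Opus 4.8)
The plan is to fix the absolutely continuous parallel frames described just above the statement, rewrite conditions (II) and (III) in coordinates with respect to them, and observe that they collapse to the single requirements $\dot A\equiv 0$ and $\dot B\equiv 0$ almost everywhere.

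First I would record that $A(t)$ and $B(t)$ are absolutely continuous curves of matrices: $q(t)$ and $p(t)$ are absolutely continuous by hypothesis, and the frames $\{e_j(t)\}$, $\{\hat e_i(t)\}$, $\{\epsilon_\lambda(t)\}$, $\{\hat\epsilon_\kappa(t)\}$ are absolutely continuous since they are obtained by (normal) parallel transport along the absolutely continuous curves $x(t)$ and $\widehat x(t)$. Hence $a_{ij}(t)=\hat e_i^*(t)\,q(t)\,e_j(t)$ and $b_{\kappa\lambda}(t)=\hat\epsilon_\kappa^*(t)\,p(t)\,\epsilon_\lambda(t)$ are absolutely continuous, so ``$A$ is constant'' is equivalent to ``$\dot A(t)=0$ for almost every $t$'', and likewise for $B$.

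Next comes the computation for the tangential part. From the definition, $q(t)e_j(t)=\sum_i a_{ij}(t)\hat e_i(t)$. For an arbitrary absolutely continuous tangent vector field $Z(t)=\sum_j z_j(t)e_j(t)$ along $x(t)$, parallelism of $\{e_j(t)\}$ gives $\frac{D}{dt}Z(t)=\sum_j \dot z_j(t)e_j(t)$, hence
\[
q(t)\frac{D}{dt}Z(t)=\sum_{i,j}\dot z_j(t)\,a_{ij}(t)\,\hat e_i(t),
\]
while, since $q(t)Z(t)=\sum_{i,j}z_j(t)a_{ij}(t)\hat e_i(t)$ and $\{\hat e_i(t)\}$ is parallel along $\widehat x(t)$,
\[
\frac{D}{dt}\bigl(q(t)Z(t)\bigr)=\sum_{i,j}\bigl(\dot z_j(t)a_{ij}(t)+z_j(t)\dot a_{ij}(t)\bigr)\hat e_i(t).
\]
Subtracting yields, for almost every $t$,
\[
\frac{D}{dt}\bigl(q(t)Z(t)\bigr)-q(t)\frac{D}{dt}Z(t)=\sum_i\Bigl(\sum_j z_j(t)\dot a_{ij}(t)\Bigr)\hat e_i(t).
\]
If $A$ is constant this vanishes for every $Z$, so (II) holds; conversely, testing (II) against the $n$ particular fields $Z=e_k$ (so that $z_j=\delta_{jk}$) forces $\dot a_{ik}(t)=0$ for almost every $t$ and all $i,k$, whence $A$ is constant. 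The identical argument, run with $p(t)$, the normal frames, the normal covariant derivative $\frac{D^\bot}{dt}$, and an arbitrary normal vector field $\Psi(t)=\sum_\lambda\psi_\lambda(t)\epsilon_\lambda(t)$, gives
\[
\frac{D^\bot}{dt}\bigl(p(t)\Psi(t)\bigr)-p(t)\frac{D^\bot}{dt}\Psi(t)=\sum_\kappa\Bigl(\sum_\lambda\psi_\lambda(t)\dot b_{\kappa\lambda}(t)\Bigr)\hat\epsilon_\kappa(t),
\]
so that (III) is equivalent to $B$ being constant, testing against $\Psi=\epsilon_\mu$.

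I do not anticipate a genuine obstacle: the argument is essentially bookkeeping in a parallel frame. The only point deserving a little care is the ``for almost every $t$'' clause — one must note that it suffices to test (II) and (III) against the finitely many frame fields $e_k$ and $\epsilon_\mu$, so the exceptional null sets form a finite union, and then combine $\dot A=0$ a.e.\ with the absolute continuity of $A$ (and similarly for $B$) to conclude that $A$ and $B$ are genuinely constant.
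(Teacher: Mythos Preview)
Your proof is correct and follows essentially the same approach as the paper: both reduce conditions (II) and (III) to $\dot a_{ij}=0$ and $\dot b_{\kappa\lambda}=0$ by expanding in the chosen parallel frames. The only cosmetic difference is that the paper differentiates $a_{ij}(t)=\langle \hat e_i, q(t)e_j\rangle$ directly in the ambient $\mathbb{R}^{n+\nu}$ (using that $\dot e_j$ and $\dot{\hat e}_i$ are normal, since the frames are tangent parallel) to reach $\dot a_{ij}=\langle \hat e_i,\frac{D}{dt}(qe_j)-q\frac{D}{dt}e_j\rangle$, whereas you work intrinsically with a general $Z$ and test against $Z=e_k$; your version is arguably cleaner and makes the ``if and only if'' explicit.
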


\begin{proof}
Let $(q(t),p(t))$ be an absolutely continuous curve. Then we have $\langle \hat{e}_i , \dot{\hat{e}}_j \rangle = \langle e_i , \dot{e}_j \rangle = 0$ and
$$\dot{a}_{ij}(t) = \langle \dot{\hat{e}}_i, q(t) e_j \rangle + \left\langle \hat{e}_i, \frac{d}{dt} (q(t) e_j) \right\rangle$$
by the product rule. The vectors $q(t)^{-1}\hat e_i$, $q(t)e_j$ are tangent, so $\langle q(t)^{-1}\hat e_i , \dot{e}_j \rangle=\langle \dot{\hat e}_i,q(t) e_j \rangle=0$ and
\begin{eqnarray*}\dot{a}_{ij}(t)&  =  & \langle \hat{e}_i, \dot q(t) e_j \rangle+\langle \hat{e}_i, q(t) \dot e_j \rangle + \langle \dot{\hat{e}}_i, q(t) e_j \rangle \\
&
= &\langle \hat{e}_i, \dot q(t) e_j \rangle+\langle q(t)^{-1}\hat e_i , \dot{e}_j \rangle =
\left\langle \hat{e}_i, \frac{d}{dt} \big(q(t) e_j\big) - q(t) \dot{e}_j \right\rangle \\
& = &\left\langle \hat{e}_i, \frac{D}{dt} q(t) e_j - q(t) \frac{D}{dt} e_j \right\rangle=0.
\end{eqnarray*}
So (II) holds if and only if $\dot{a}_{ij}(t) = 0.$ Similar result holds for the basis of the normal tangent bundle.
\end{proof}

The following two theorems give the answer to the questions raised at the beginning of this section.
\begin{theorem} \label{number intrinsic}
Let $q_0:[0,\tau] \rightarrow Q$ be a given intrinsic rolling map without slipping or twisting with the projection $\pr_{M \times \widehat{M}} q_0(t) = (x(t),\widehat{x}(t))$. Denote by $k$ the dimension of the space of parallel tangent vector fields along $\widehat{x}(t)$, that are orthogonal to $\dot{\widehat{x}}(t)$ whenever they are defined. Then the following statements hold.
\begin{itemize}
\item[(a)] The map $q_0$ is the unique intrinsic rolling of $M$ on $\widehat{M}$ along $x(t)$ and $\widehat{x}(t)$ if and only if $k \leq 1$.
\item[(b)] If $k \geq 2$, then there exists an injective Lie group homomorphism $\zeta: \SO(k) \rightarrow \SO(n)$ such that for each $A' \in \SO(k)$ the map $q_0(t) \zeta(A')$ is an intrinsic rolling over $(x(t),\widehat{x}(t))$, and any intrinsic rolling over $(x(t),\widehat{x}(t))$ is of this form.
\end{itemize}
\end{theorem}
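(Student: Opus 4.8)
The plan is to use Lemma~\ref{constantM} to reduce the classification of intrinsic rollings over a fixed pair of curves $(x(t),\widehat{x}(t))$ to linear algebra on the constant matrix $A=(a_{ij})\in\SO(n)$. Fix parallel orthonormal frames $\{e_j(t)\}$ along $x(t)$ and $\{\hat e_i(t)\}$ along $\widehat x(t)$, chosen (by reindexing) so that $\hat e_1(t)$ is parallel to $\dot{\widehat x}(t)$ at every point where $\dot{\widehat x}(t)\neq 0$; here I will first treat the generic situation where $\dot{\widehat x}$ does not vanish identically, and address the degenerate case at the end. By Lemma~\ref{constantM} any intrinsic rolling $q(t)$ over these curves corresponds bijectively to a constant matrix $A\in\SO(n)$ via $\hat e_i=q\,e_j a_{ji}$; conversely every constant $A\in\SO(n)$ gives an absolutely continuous curve $q(t)$ satisfying (II'), so the only extra constraint to impose is the no-slip condition (I'). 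Writing $\dot x(t)=\sum_j u_j(t) e_j(t)$ and $\dot{\widehat x}(t)=\sum_i v_i(t)\hat e_i(t)$, the no-slip condition $\dot{\widehat x}(t)=q(t)\dot x(t)$ becomes $v_i(t)=\sum_j a_{ij} u_j(t)$ for a.e.\ $t$.

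Next I would extract the geometric content of this constraint. Because the frames are parallel and the rolling is intrinsic, a standard fact (already used implicitly in the excerpt) is that $|\dot x(t)|=|\dot{\widehat x}(t)|$ for a.e.\ $t$, and moreover the no-slip + no-twist conditions force the coordinate vectors $u(t)=(u_j(t))$ and $v(t)=(v_i(t))$ to be related \emph{rigidly} in a way independent of the reference rolling once one is fixed. Concretely, fix the given $q_0(t)\leftrightarrow A_0$; any other intrinsic rolling $q(t)\leftrightarrow A$ over the same curves satisfies $A u(t)=v(t)=A_0 u(t)$ for a.e.\ $t$, hence $(A_0^{-1}A)\,u(t)=u(t)$ for a.e.\ $t$. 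Thus $A_0^{-1}A$ fixes every vector in the span $W\subseteq\real^n$ of $\{u(t): t\in[0,\tau]\}$ (the set of "velocity directions in the $e$-frame"). Conversely, any $A\in\SO(n)$ with $A_0^{-1}A$ fixing $W$ pointwise satisfies no-slip, and automatically (II'), so it is an intrinsic rolling. Therefore the set of intrinsic rollings over $(x(t),\widehat x(t))$ is exactly $q_0(t)\cdot G$, where $G=\{A'\in\SO(n): A'|_W=\mathrm{id}\}\cong\SO(W^\perp)\cong\SO(n-\dim W)$.

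It remains to identify $k$ with $n-\dim W$. The orthogonal complement $W^\perp$ in the $e$-frame corresponds, under parallel transport, to the parallel tangent vector fields along $x(t)$ that are orthogonal to $\dot x(t)$ wherever defined; applying $q_0(t)$ (which, by (II'), maps parallel fields along $x$ to parallel fields along $\widehat x$, and by (I') sends $\dot x(t)$ to $\dot{\widehat x}(t)$) identifies this space isometrically with the space of parallel tangent vector fields along $\widehat x(t)$ orthogonal to $\dot{\widehat x}(t)$, whose dimension is $k$ by hypothesis. Hence $\dim W^\perp=k$, i.e.\ $\dim W=n-k$, and $G\cong\SO(k)$. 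Taking $\zeta:\SO(k)\to\SO(n)$ to be the inclusion of $\SO(W^\perp)$ into $\SO(n)$ (an injective Lie group homomorphism, depending on the chosen frames) gives that every intrinsic rolling over the pair is of the form $q_0(t)\zeta(A')$, $A'\in\SO(k)$. Part (a) is the case $k\le 1$: then $\SO(k)$ is trivial, so $q_0$ is the unique such rolling; and conversely if $k\ge 2$ part (b) produces a positive-dimensional family, so uniqueness forces $k\le 1$. The main obstacle I anticipate is the degenerate case in which $\dot{\widehat x}(t)$ vanishes on a set of positive measure, or is identically zero: there the no-slip condition imposes no constraint on $A$ on that portion, but one must check that absolute continuity of $q(t)$ together with $Au(t)=v(t)$ a.e.\ still forces $A_0^{-1}A$ to fix all of $W$ (using that $W$ is by definition spanned by the a.e.-defined $u(t)$, and that on the set where $\dot x=0$ one has $u(t)=0$ contributing nothing to $W$), so the argument goes through unchanged; one should also remark that when $x(t)$ is constant on a subinterval, any constant $A$ is admissible there, consistent with $W=0$, $k=n$.
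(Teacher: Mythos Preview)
Your argument is correct and follows essentially the same route as the paper: both use Lemma~\ref{constantM} to reduce intrinsic rollings over $(x(t),\widehat{x}(t))$ to constant matrices in $\SO(n)$, then observe that the no-slip condition forces $A_0^{-1}A$ to fix the span $W$ of the velocity coordinates, so the freedom is exactly $\SO(W^\perp)\cong\SO(k)$; the paper merely streamlines this by choosing the parallel frames so that $A_0=\mathbf 1$ and $W=\spn\{e_{k+1},\dots,e_n\}$, making the block form $\left(\begin{smallmatrix}A'&0\\0&\mathbf 1_{n-k}\end{smallmatrix}\right)$ immediate. One small remark: your opening claim that one can choose a parallel $\hat e_1(t)$ proportional to $\dot{\widehat x}(t)$ whenever the latter is nonzero is false in general (it would force $\widehat x$ to be a reparametrized geodesic), but you never actually use it, so the argument stands.
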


\begin{proof}

Pick up frames of parallel vector fields $\{e_i\}_{i=1}^n$ and $\{\hat{e}_i \}_{i=1}^n$ along $x(t)$ and $\widehat{x}(t)$, respectively, such that $q_0(t) e_i = \hat{e}_i$. This is possible due to Lemma~\ref{constantM}. We also choose the frames in a way that the $k$ first vector fields are orthogonal to $\dot{\widehat{x}}$.

Writing $\dot{\widehat{x}} = \sum_{i =1}^n \dot{\widehat{x}}_i(t) \hat{e}_i(t)$ and $\dot{x} = \sum_{i =1}^n \dot{x}_i(t) e_i(t)$, we get $\dot{\widehat{x}}_i(t) = \dot{x}_i(t)$ and $\dot{\widehat{x}}_1(t) = \dots = \dot{\widehat{x}}_k(t) =0$. So, if $q$ is any other rolling, then $A =(a_{ij})= (\langle\hat{e}_i(t) , q(t) \, e_j(t)  \rangle)$ is clearly of the form
$$A = \left( \begin{array}{cc} A' & 0 \\ 0 & \mathbf 1_{n-k} \end{array} \right), \qquad A' \in \SO(k),$$
where $\mathbf 1_{n-k}$ is the $\big((n-k)\times(n-k)\big)$-unit matrix.

The converse also holds; that is, for any such matrix $A$, there is a rolling corresponding to it.
\end{proof}

\begin{theorem} \label{unique extension}
 Let $q_0:[0,\tau] \rightarrow Q$ be an intrinsic rolling and let $\iota:M \rightarrow \mathbb{R}^{n+\nu}$ and $\widehat{\iota}: \widehat{M} \rightarrow \mathbb{R}^{n+\nu}$ be given imbeddings. Then there exists a rolling $(q_0,p): [0,\tau] \rightarrow Q \oplus P_{\iota,\widehat{\iota}}$ that is unique up to a right action of $\SO(\nu)$.
\end{theorem}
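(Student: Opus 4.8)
The plan is to notice first that an intrinsic rolling $q_0$ already carries conditions (I) and (II) of Definition~\ref{imbeddef3}: condition (I) is literally (I$'$) of Definition~\ref{intrinsdef}, while (II) is the covariant-derivative form of (II$'$), the two being equivalent by the remark following Lemma~\ref{equiv1} applied to the linear maps $q_0(t)\colon T_{x(t)}M\to T_{\widehat x(t)}\widehat M$. Hence the whole content of the theorem is to produce a normal component $p(t)$ for which $(q_0,p)$ satisfies (III), and to show that $p$ is determined up to a constant element of $\SO(\nu)$.

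\textbf{Existence.} I would build $p$ from normal parallel frames. Using the normal analogue of the Existence and Uniqueness Theorem for parallel transport along absolutely continuous curves, choose a positively oriented orthonormal frame $\{\epsilon_\lambda(t)\}_{\lambda=1}^\nu$ of $T_{x(t)}M^\perp$ consisting of normal parallel vector fields along $x(t)$, and likewise a positively oriented normal parallel orthonormal frame $\{\hat\epsilon_\kappa(t)\}_{\kappa=1}^\nu$ of $T_{\widehat x(t)}\widehat M^\perp$ along $\widehat x(t)$; both are absolutely continuous, and since normal parallel transport is a continuous family of linear isometries of the oriented normal spaces starting at the identity, the frames remain positively oriented along the whole curve if they do so at the initial time. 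Now define $p(t)\colon T_{x(t)}M^\perp\to T_{\widehat x(t)}\widehat M^\perp$ by $p(t)\epsilon_\lambda(t)=\hat\epsilon_\lambda(t)$. Then $p(t)$ is a linear orientation-preserving isometry, hence lies in the fiber of $P_{\iota,\widehat\iota}$ over $(x(t),\widehat x(t))$ by \eqref{Pdef}, and $(q_0,p)$ is an absolutely continuous curve in $Q\oplus P_{\iota,\widehat\iota}$ since the pair of frames is an absolutely continuous lift of $p$ to $\Phi\times\widehat\Phi$. By construction the matrix $B(t)=\big(\hat\epsilon_\kappa^*(t)\,p(t)\,\epsilon_\lambda(t)\big)$ is the identity matrix, in particular constant, so the normal part of Lemma~\ref{constantM} gives that $(q_0,p)$ satisfies (III). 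Therefore $(q_0,p)$ is a rolling.

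\textbf{Uniqueness.} Suppose $(q_0,p')$ is another rolling; then $p'(t)$ lies over the same base curve $(x(t),\widehat x(t))$, so it may be compared with $p(t)$ through the same normal parallel frames $\{\epsilon_\lambda\}$, $\{\hat\epsilon_\kappa\}$. By Lemma~\ref{constantM} the matrix $B'(t)=\big(\hat\epsilon_\kappa^*(t)\,p'(t)\,\epsilon_\lambda(t)\big)$ is a constant matrix $B_0$, which lies in $\SO(\nu)$ because $p'(t)$ is orientation preserving and both frames are positively oriented. Hence $p'(t)\epsilon_\lambda(t)=\sum_\kappa (B_0)_{\kappa\lambda}\,\hat\epsilon_\kappa(t)=\sum_\kappa (B_0)_{\kappa\lambda}\,p(t)\epsilon_\kappa(t)$ for all $t$, which, in the notation of Section~\ref{conspace}, says exactly that $p'(t)=p(t)\cdot B_0$ for the right $\SO(\nu)$-action on $P_{\iota,\widehat\iota}$. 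Thus $(q_0,p')$ is obtained from $(q_0,p)$ by the right action of the single group element $B_0\in\SO(\nu)$, establishing uniqueness up to $\SO(\nu)$.

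\textbf{Main obstacle.} The routine parts are the verification that normal parallel transport of an absolutely continuous curve produces absolutely continuous frames and the translation of ``constant matrix'' into the bundle-theoretic right action with the index conventions of Section~\ref{conspace}. The one point that needs genuine care is the orientation bookkeeping: ensuring that the constructed $p(t)$ and the comparison element $B_0$ actually land in the orientation-preserving components $\Isom_0^+$ and $\SO(\nu)$, rather than merely in $\Isom$ and $O(\nu)$, which is why I insist on choosing the normal parallel frames positively oriented and on recording that normal parallel transport preserves this property.
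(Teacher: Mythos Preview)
Your proof is correct and follows essentially the same approach as the paper: both construct normal parallel frames along $x(t)$ and $\widehat{x}(t)$, define $p(t)$ so that its matrix in these frames is constant, and invoke Lemma~\ref{constantM} for both existence and the classification of all possible $p$. Your version is more explicit about orientation bookkeeping and about translating the constant matrix into the right $\SO(\nu)$-action, but the underlying argument is the same.
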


\begin{proof}
We pick up normal parallel frames $\{\epsilon_\lambda(t)\}_{\lambda =1}^\nu$ and $\{\hat{\epsilon}_\kappa(t)\}_{\kappa =1}^\nu$ along $x(t)$ and $\widehat{x}(t)$, respectively. For any element $B \in \SO(\nu)$ we define $p(t)$ by
$$B = \langle\widehat{\epsilon}_\kappa , p(t) \, \epsilon_\lambda \rangle.$$ The map $(q_0(t),p(t))$ is a rolling by Lemma~\ref{constantM}, and all rollings are of this form.
\end{proof}

\begin{corollary}
Assume that $x(t)$ is a geodesic in $M$. Then there exists an intrinsic rolling of $M$ on $\widehat{M}$ along $(x(t),\widehat{x}(t))$ if and only if $\widehat{x}(t)$ is a geodesic with the same speed as $x(t)$. Moreover, if $n \geq 2$ then there is an injective Lie group homomorphism $\zeta\colon\SO(n-1) \to \SO(n)$, such that all intrinsic rollings over $(x(t),\widehat{x}(t))$ differ by an element in $\im \zeta$.
\end{corollary}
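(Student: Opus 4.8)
The corollary is a direct specialization of Theorems \ref{number intrinsic} and \ref{unique extension}, so the strategy is to identify the quantity $k$ appearing in Theorem \ref{number intrinsic} in the geodesic setting and to translate the "space of parallel tangent vector fields orthogonal to $\dot{\widehat{x}}$" into geometric language.

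First I would record the elementary fact that a curve $x(t)$ is a geodesic precisely when its velocity field $\dot x(t)$ is parallel along itself, i.e. $\frac{D}{dt}\dot x(t)=0$ for almost every $t$; this is immediate from the definition of the covariant derivative along a curve. Next, suppose an intrinsic rolling $q(t)$ of $M$ on $\widehat M$ along $(x(t),\widehat x(t))$ exists. By the no-slip condition (I') we have $\dot{\widehat x}(t)=q(t)\dot x(t)$, and by the no-twist condition (II') together with Lemma \ref{constantM}, $q(t)$ carries parallel tangent fields along $x(t)$ to parallel tangent fields along $\widehat x(t)$. Since $q(t)$ is a linear isometry, $\|\dot{\widehat x}(t)\|=\|\dot x(t)\|$, giving the equality of speeds; and since $\dot x(t)$ is parallel along $x(t)$, its image $q(t)\dot x(t)=\dot{\widehat x}(t)$ is parallel along $\widehat x(t)$, so $\widehat x(t)$ is a geodesic. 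For the converse, assume $\widehat x(t)$ is a geodesic of the same speed. Then $\dot x(t)$ and $\dot{\widehat x}(t)$ are nowhere-vanishing parallel fields of equal length along their respective curves (or both are constant points, a degenerate case one should dispatch separately); one builds an intrinsic rolling by choosing parallel orthonormal frames $\{e_i\}$, $\{\hat e_i\}$ along $x$, $\widehat x$ with $e_1=\dot x/\|\dot x\|$, $\hat e_1=\dot{\widehat x}/\|\dot{\widehat x}\|$, and defining $q(t)$ by $q(t)e_i(t)=\hat e_i(t)$. Lemma \ref{constantM} shows this $q(t)$ satisfies (II'), and the choice $q(t)e_1=\hat e_1$ combined with the speed equality gives (I'), so an intrinsic rolling exists.

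For the counting statement, I would observe that once $x(t)$ is a geodesic the number $k$ of Theorem \ref{number intrinsic}—the dimension of the space of parallel tangent fields along $\widehat x(t)$ orthogonal to $\dot{\widehat x}(t)$—is exactly $n-1$ when $\dot{\widehat x}(t)\neq 0$: the full space of parallel fields along $\widehat x(t)$ is $n$-dimensional, parallel transport preserves inner products with the parallel field $\dot{\widehat x}(t)$, so "orthogonal to $\dot{\widehat x}$" cuts down the dimension by exactly one. Plugging $k=n-1$ into part (b) of Theorem \ref{number intrinsic} (valid once $n\geq 2$, so that $k\geq 1$, and in fact $k\geq 2$ when $n\geq 3$; the case $n=2$, $k=1$ falls under part (a) and $\SO(0)$ or $\SO(1)$ is trivial, consistent with $\im\zeta$ being trivial) yields an injective homomorphism $\zeta\colon\SO(n-1)\to\SO(n)$ with all intrinsic rollings over $(x(t),\widehat x(t))$ differing by an element of $\im\zeta$.

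**Anticipated obstacle.** The substantive point is the clean identification $k=n-1$, which hinges on $\dot{\widehat x}(t)$ being a nonzero parallel field; the mild nuisance is the borderline/degenerate bookkeeping—what happens when the geodesic is a constant point (speed zero, where "orthogonal to $\dot{\widehat x}$" is vacuous and $k=n$), and reconciling the small cases $n=1,2$ with the stated hypothesis $n\geq 2$ and with part (a) versus part (b) of Theorem \ref{number intrinsic}. These are routine to handle but should be stated explicitly so the corollary reads correctly; none of it requires new ideas beyond the two theorems already proved.
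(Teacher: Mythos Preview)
Your proposal is correct and follows essentially the same route as the paper: both prove the forward implication via the no-twist condition applied to the parallel field $\dot x$, and both construct the converse by building parallel orthonormal frames along $x$ and $\widehat x$ with first vectors $\dot x/\|\dot x\|$ and $\dot{\widehat x}/\|\dot{\widehat x}\|$. The only difference is presentational: for the classification of all rollings you invoke Theorem~\ref{number intrinsic} with $k=n-1$, whereas the paper repeats the frame argument directly and reads off the $\SO(n-1)$ freedom in the lower-right block; your version is arguably cleaner since it reuses the theorem already proved, and your explicit handling of the degenerate and small-$n$ cases is more careful than the paper's.
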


\begin{proof}
Taking into account the equality $\frac{D}{dt} \dot{\widehat{x}}(t) = \frac{D}{dt} q(t) \dot{x}(t)=
q(t) \frac{D}{dt} \dot{x}(t)$, we conclude that if $x(t)$ is a geodesic then $\widehat{x}(t)$ is also geodesic. In order to satisfy (I) we need to require that the speed of $\dot{\widehat{x}}(t)$ is the same as the speed of $\dot x(t)$. Conversely, the equality of speeds implies condition~(I).

We start the construction of rolling map by choosing $e_1(t) = \frac{\dot x(t)}{\langle \dot{x}(t), \dot{x}(t) \rangle}$ that is parallel along $x(t)$. The remaining $n-1$ parallel vector fields we pick up in a way that they form an orthonormal basis together with $e_1(t)$ along the curve $x(t)$. We repeat the same construction for a parallel frame $\{ \hat{e}_i(t) \}_{i=1}^n$ along $\widehat{x}(t)$. Define the intrinsic rolling $q(t)$ by
\begin{equation} \label{intr1 eq}
\begin{split}
& \hat{e}_1^*(t) \, q(t) \, e_j(t) = \hat{e}_j^*(t) \, q(t) \, e_1(t) = \delta_{1,j},\\
& \ \ \ \ \ \ A' = \left(\hat{e}_{i+1}^*(t) \, q(t) \, e_{j+1}(t) \right)_{i,j =1}^{n-1},
\end{split}
\end{equation}
where $A' \in \SO(n-1)$ will be a constant matrix. Conversely, we can construct a rolling
by formulas~\eqref{intr1 eq} starting from $A' \in \SO(n-1)$.
\end{proof}

\section{Distributions for rolling and intrinsic rolling maps}

The aim of this Section is to formulate the kinematic conditions of no-slipping and no-twisting in terms of a distribution. In this setting, a rolling will be an absolutely continuous curve almost everywhere tangent to this distribution.

\subsection{Local trivializations of $Q$} \label{local triv}

Let $\pi: Q \oplus P_{\iota,\widehat{\iota}} \to M \times \widehat{M}$ denote the canonical projection.
Consider a rolling $\gamma(t) = (q(t) , p(t))$, then $\pi\circ \gamma(t) = \left( x(t) , \widehat{x}(t) \right)$.
Given an arbitrary $t_0$ in the domain of $\gamma(t)$, let $U$ and $\widehat{U}$ denote neighborhoods of $x(t_0)$ and $\widehat{x}(t_0)$
in $M$ and $\widehat{M}$, respectively,
such that the both bundles $TM \to M$
and $TM^\perp \to M$ trivialize being restricted to $U$. In the same way we chose $\widehat U$, such that
both $T\widehat{M} \to \widehat{M}$ and $T\widehat{M}^\perp \to \widehat{M}$ trivialize when they are restricted to
$\widehat{U}$. This implies that the bundle $Q \oplus P_{\iota,\widehat{\iota}} \to M \times \widehat{M}$, trivializes
when it is restricted to $U \times \widehat{U}$.
To see this, let $\{ e_j \}_{j = 1}^n$, $\{ \epsilon_\lambda \}_{\lambda = 1}^\nu$, $\{ \hat{e}_i \}_{i = 1}^n$ and
$\{ \hat{\epsilon}_\kappa \}_{\kappa = 1}^\nu$ denote positively oriented
orthonormal bases of vector fields of $TM|_U$,
$TM^\perp|_U$, $T\widehat{M}|_{\widehat{U}}$ and $T\widehat{M}^\perp|_{\widehat{U}}$, respectively.
Then there is a trivialization
\begin{equation} \label{trivi}
\begin{array}{rcl}
Q \oplus P_{\iota,\widehat{\iota}}|_ {U \times \widehat{U}} & \stackrel{h}{\to} & U \times \widehat{U} \times \SO(n) \times \SO(\nu)
\\
(q, p) & \mapsto & (x,\widehat{x}, A, B),
\end{array}
\end{equation}
given by projections
$$x = \pr_U (q,p), \qquad \widehat{x} = \pr_{\widehat{U}} (q,p),$$
$$A = \left( a_{ij} \right)_{i,j = 1}^n = \left( \langle q e_j, \hat{e}_i \rangle \right)_{i,j = 1}^n, \qquad
B = \left( b_{\kappa\lambda} \right)_{\kappa,\lambda = 1}^\nu =
\left( \langle p \epsilon_\lambda, \hat{\epsilon}_\kappa \rangle \right)_{\kappa,\lambda = 1}^\nu.$$
The domain of $\gamma$ can be chosen connected, containing $t_0$, and such that its image lies in
$\pi^{-1}(U \times \widehat{U})$. Let us identify $\gamma(t)$ with its image under the trivialization
given by $(x(t), \widehat{x}(t), A(t), B(t))$.

Each of the requirements (I)-(III) can be written as restrictions to $\dot{\gamma}(t)$.
We will show, that all admissible values of $\dot\gamma(t)$ form a distribution; that is a smooth
sub-bundle, of $T(Q \oplus P_{\iota,\widehat{\iota}})$. We will use the local trivializations to
describe this distribution.

\subsection{The tangent space of $\SO(n)$} \label{tangent SOn}
Let $U$ and $\widehat{U}$ be as in Section~\ref{local triv}. Then we get  in trivialization
$$T\pi^{-1}(U \times \widehat{U}) = TU \times T\widehat{U} \times T\SO(n) \times T\SO(\nu).$$
The decomposition requires that we present a detailed description of the tangent space of $\SO(n)$ in terms of left and right invariant vector fields.

We start by considering the imbedding of $\SO(n)$ in $\GL(n)$, the group of invertible real $n \times n$ matrices.
Denote the matrix entries of a matrix $A$ by $ (a_{ij})$ and the transpose matrix by $A^t$. Then, differentiating the condition $A^tA = {\bf 1}$, we obtain
$$T\SO(n) = \bigcap_{i \leq j} \ker \omega_{ij}, \qquad \omega_{ij} =
\sum_{r =1}^n \left(a_{rj} \, da_{ri} + a_{ri} \, da_{rj}\right).$$
It is clear that the tangent space at the identity $1$ of $\SO(n)$ is spanned by
$$W_{ij}(1) := \frac{\partial}{\partial a_{ij}} - \frac{\partial}{\partial a_{ji}}, \qquad 1 \leq i < j \leq n.$$
We denote $\mathfrak{so}(n)=\spn\{W_{ij}(1)\}$ following the classical notation. We use the left translation of these vector to define
\begin{equation} \label{SObasis}
W_{ij}(A) := A \cdot W_{ij}(1) = \sum_{r =1}^n \left(a_{ri} \frac{\partial}{\partial a_{rj}} - a_{rj} \frac{\partial}{\partial a_{ri}} \right)
\end{equation}
as global left invariant basis of $T\SO(n)$.
Note that the left and right action in $T\SO(n)$ is described by
$$A \cdot \frac{\partial}{\partial a_{ij}} = \sum_{r=1}^n a_{ri} \frac{\partial}{\partial a_{rj}} \qquad \qquad
\frac{\partial}{\partial a_{ij}} \cdot A = \sum_{s =1}^n a_{js} \frac{\partial}{\partial a_{is}}.$$
We have the following formula to switch from left to right translation
$$A \cdot \frac{\partial}{\partial a_{ij}} = \sum_{r=1}^n a_{ri} \frac{\partial}{\partial a_{rj}}
= \sum_{l, r =1}^n a_{ri} \delta{j,l} \frac{\partial}{\partial a_{rl}}
= \sum_{l, r, s =1}^n a_{ri} a_{si} a_{sl} \frac{\partial}{\partial a_{rl}}$$
$$= \sum_{r,s=1}^n a_{ri} a_{si} \left(\frac{\partial}{\partial a_{rs}} \cdot A \right),$$
and the other way around,
$$\frac{\partial}{\partial a_{ij}} \cdot A = \sum_{s =1}^n a_{js} \frac{\partial}{\partial a_{is}}
= \sum_{l, s =1}^n a_{js} \delta_{i,l} \frac{\partial}{\partial a_{ls}}
= \sum_{l, r, s =1}^n a_{js} a_{ir} a_{lr} \frac{\partial}{\partial a_{ls}}$$
$$= \sum_{r, s =1}^n a_{js} a_{ir} \left(A \cdot \frac{\partial}{\partial a_{rs}}\right).$$

Therefore, the right invariant basis of $T\SO(n)$ can be written as
$$W_{ij}(1) \cdot A = \mathrm{Ad}(A^{-1}) W_{ij} (A) = \sum_{r<s} (a_{ir} a_{js} - a_{jr} a_{is}) W_{rs}(A).$$

If we let $W_{ij}$ be defined \eqref{SObasis} also when $i$ is not less then $j$, (so $W_{ij} = - W_{ji}$) then
the bracket relations are given by
$$[W_{ij}, W_{kl}] = \delta_{j,k} W_{il} + \delta_{i,l} W_{jk} - \delta_{i,k} W_{jl} - \delta_{j,l} W_{ik}.$$

\subsection{Distributions} Now we are ready to rewrite the kinematic conditions (I)-(III) as a distribution. Let $\gamma(t)$ be a rolling satisfying the conditions (I)-(III). Consider it image under the trivializations. Then
\begin{equation}\label{dotgamma}
\dot{\gamma}(t) = \dot{x}(t) + \dot{\widehat{x}}(t) + \sum_{i,j = 1}^n \dot{a}_{ij} \frac{\partial}{\partial a_{ij}}
+ \sum_{\kappa,\lambda = 1}^\nu \dot{b}_{\kappa\lambda} \frac{\partial}{\partial b_{\kappa\lambda}}.
\end{equation}
If we denote $\dot{x}(t)$ by $Z(t)$, then (I) holds if and only if $\dot{\widehat{x}}(t) = q(t) Z(t)$.

We want, basing on conditions (II) and (III), write the last two terms in~\eqref{dotgamma} in right invariant basis of corresponding tangent spaces of $\SO(n)$ and $\SO(\nu)$.
We start from (II) and remark that
$$q(t) e_j = \sum_{i =1}^n a_{ij}(t) \hat{e}_i, \quad \text{and} \quad q^{-1}(t) \hat{e}_i = \sum_{j=1}^n a_{ij}(t) e_j$$
for orthonormal bases $\{e_j\}_{j=1}^{n}$ and $\{\widehat e_j\}_{j=1}^{n}$. Condition (II) holds if and only if $q \frac{D}{dt} e_j(x(t)) = \frac{D}{dt} qe_j(x(t))$ for $j =1, \dots, n$,
that yields
$$0 = \left\langle q \frac{D}{dt} e_j(x(t))- \frac{D}{dt} q e_j(x(t)), \hat{e}_i \right\rangle$$
$$= \left\langle \nabla_{Z(t)} e_j, q^{-1} \hat{e}_i \right\rangle
- \left\langle \sum_{l=1}^n \dot{a}_{lj} \hat{e}_l , \hat{e}_i \right\rangle
- \left\langle \sum_{l=1}^n a_{lj} \nabla_{qZ(t)} \hat{e}_l, \hat{e}_i \right\rangle.$$
$$= \sum_{l = 1}^n a_{il} \left\langle \nabla_{Z(t)} e_j, e_l \right\rangle
- \dot{a}_{ij} - \sum_{l=1}^n a_{lj} \left\langle\nabla_{qZ(t)} \hat{e}_l, \hat{e}_i \right\rangle$$  for every $i,j =1, \dots, n$.
Hence, the third term in~\eqref{dotgamma} can be written as follows
\begin{eqnarray}\label{sumdist}
\sum_{i,j=1}^n \dot{a}_{ij} \frac{\partial}{\partial a_{ij}}&
=& \sum_{i,j=1}^n \left( \sum_{l = 1}^n a_{il} \left\langle \nabla_{Z(t)} e_j, e_l \right\rangle
 - \sum_{l=1}^n a_{lj} \left\langle\nabla_{qZ(t)} \hat{e}_l, \hat{e}_i \right\rangle\right) \frac{\partial}{\partial a_{ij}}\nonumber\\
&=& \sum_{j,l=1}^n \left\langle \nabla_{Z(t)} e_j, e_l \right\rangle A \cdot \frac{\partial}{\partial a_{lj}}
 - \sum_{i,l=1}^n \left\langle\nabla_{qZ(t)} \hat{e}_l, \hat{e}_i \right\rangle \frac{\partial}{\partial a_{il}} \cdot A\nonumber\\
&=& \sum_{i,j=1}^n \left\langle \nabla_{Z(t)} e_j, e_i \right\rangle A \cdot \frac{\partial}{\partial a_{ij}}
 - \sum_{i,j,r,s=1}^n a_{ir} a_{js} \left\langle\nabla_{qZ(t)} \hat{e}_j, \hat{e}_i \right\rangle A \cdot \frac{\partial}{\partial a_{rs}}\nonumber\\
&=& \sum_{i,j=1}^n \left(\left\langle \nabla_{Z(t)} e_j, e_i \right\rangle
 - \sum_{s = 1}^n a_{sj}  \left\langle \nabla_{qZ(t)} \hat{e}_s, \sum_{r =1}^n a_{ri} \hat{e}_r \right\rangle \right) A \cdot \frac{\partial}{\partial a_{ij}}\nonumber\\
&=&\sum_{i,j=1}^n \left(\left\langle \nabla_{Z(t)} e_j, e_i \right\rangle
 - \left\langle \nabla_{qZ(t)} q e_j, q e_i \right\rangle \right) A \cdot \frac{\partial}{\partial a_{ij}}
\end{eqnarray}
The coefficients in the basis $A \cdot \frac{\partial}{\partial_{ij}}$ in the
sum~\eqref{sumdist} are skew symmetric, from the property of the Levi-Civita connection.
Now we can write
\begin{equation} \label{A left}
\sum_{i,j=1}^n \dot{a}_{ij} \frac{\partial}{\partial a_{ij}}
= \sum_{i<j} \left(\left\langle \nabla_{Z(t)} e_j, e_i \right\rangle
 - \left\langle \nabla_{qZ(t)} q e_j, q e_i \right\rangle \right) W_{ij}(A).
 \end{equation}
Written in a right invariant basis, we obtain
\begin{equation} \label{A right}
\sum_{i,j=1}^n \dot{a}_{ij} \frac{\partial}{\partial a_{ij}} \\
= \sum_{i<j} \left(\left\langle \nabla_{Z(t)} q^{-1} \hat{e}_j, q^{-1} \hat{e}_i \right\rangle
 -  \left\langle \nabla_{qZ(t)} \hat{e}_j, \hat{e}_i \right\rangle \right) \text{Ad}(A^{-1}) W_{ij}(A).
 \end{equation}

Similarly, (III) holds if and only if
\begin{equation} \label{B leftright}
\begin{array}{rl}
\sum\limits_{\kappa,\lambda=1}^\nu \dot{b}_{\kappa\lambda} \frac{\partial}{\partial b_{\kappa\lambda}} &
= \sum\limits_{\kappa <\lambda} \left(\left\langle \nabla_{Z(t)}^\bot \epsilon_\lambda, \epsilon_\kappa \right\rangle
 - \left\langle \nabla_{qZ(t)}^\perp p \epsilon_\lambda, p \epsilon_\kappa \right\rangle \right) W_{\kappa\lambda}(B). \\
&= \sum\limits_{\kappa <\lambda} \left(
\left\langle \nabla_{Z(t)}^\bot p^{-1} \hat{\epsilon}_\lambda, p^{-1} \hat{\epsilon}_\kappa \right\rangle
 - \left\langle \nabla_{qZ(t)}^\perp \hat{\epsilon}_\lambda, \hat{\epsilon}_\kappa \right\rangle \right)
\text{Ad}(B^{-1}) W_{\kappa\lambda}(B).
\end{array}
 \end{equation}

\begin{definition}\label{defdistr} If $X$ is a vector field on $M$, then let us define
$\mathcal{V}(X)$ and $\mathcal{V}^\bot(X)$ the vector fields on $Q \oplus P_{\iota,\widehat{\iota}}$,
such that under any local trivialization $h$ as in \eqref{trivi} and any $(q,p) \in \pi^{-1}(x)$ they satisfy
\begin{equation} \label{Vunderh}
dh \left(\mathcal{V}(X)(q,p) \right) = \sum_{i<j} \left(\left\langle \nabla_{X(x)} e_j, e_i \right\rangle
 - \left\langle \nabla_{qX(x)} q e_j, q e_i \right\rangle \right) W_{ij}(A).
 \end{equation}
\begin{equation} \label{Vperpunderh}
dh \left(\mathcal{V}^\bot(X)(q,p)\right)
= \sum_{\kappa <\lambda} \left(\left\langle \nabla_{X(x)}^\bot \epsilon_\lambda, \epsilon_\kappa \right\rangle
 - \left\langle \nabla_{qX(x)}^\perp p \epsilon_\lambda, p \epsilon_\kappa \right\rangle \right) W_{\kappa\lambda}(B).
\end{equation}
\end{definition}

Notice that if $Y(x) = X(x) = X_0 \in T_xM$, then $\mathcal{V}(Y)(q,p) = \mathcal{V}(X)(q,p)$
for every $(q,p) \in (Q \oplus P_{\iota,\widehat{\iota}})_x$. Hence, we may define
$\mathcal{V}(X_0)(q,p)$ whenever $X_0 \in T_xM$ and $(q,p) \in  (Q \oplus P_{\iota,\widehat{\iota}})_x$.
Also notice that the map $X \mapsto \mathcal{V}(X)$ is linear. The same holds for $\mathcal{V}^\bot$.

\begin{remark}
Notice that, at first glace, it may seem that all of the coefficients of $W_{ij}(A)$
and $W_{\kappa\lambda}(A)$ in~\eqref{Vunderh} and~\eqref{Vperpunderh} vanish
from conditions (II) and (III). This is not true, however. Even though, for any tangential vector field $X$
$$\frac{D}{dt} X(x(t)) = \nabla_{\dot{x}(t)} X(x(t)),$$
in general, $\nabla_{q\dot{x}(t)}q(t) e_j$ does not coincide with $\frac{D}{dt} q(t) e_j(x(t))$.
To see this, notice that
$$\frac{D}{dt} a_{sj} \hat{e}_s(\widehat{x}(t)) = \dot{a}_{sj} \hat{e}_s(\widehat{x}(t)) + a_{sj} \nabla_{\dot{\widehat{x}}(t)}\hat{e}_s(\widehat{x}(t))
= \dot{a}_{sj} \hat{e}_s(\widehat{x}(t)) + a_{sj} \nabla_{q \dot{x}(t)}\hat{e}_s(\widehat{x}(t)),$$
while
$$\nabla_{q\dot{x}(t)} a_{sj} \hat{e}_s(x(t)) = a_{sj} \nabla_{q\dot{x}(t)} \hat{e}_s(x(t)).$$
Similar relations hold for $\frac{D^\bot}{dt}$.
\end{remark}

We may now sum up our considerations that have been made in this Section in the following result.
\begin{proposition}
A curve $(q(t),p(t))$ in $Q \oplus P_{\iota,\widehat{\iota}}$ is a rolling if and only if it is a horizontal curve
with respect to the distribution $E$, defined by $$E_{(q,p)} = \left\{X_0 + qX_0 + \mathcal{V}(X_0)(q,p) + \mathcal{V}^\perp(X_0)(q,p) | \, X_0 \in T_xM \right\},\quad (q,p) \in  (Q \oplus P_{\iota,\widehat{\iota}})_x.$$
\end{proposition}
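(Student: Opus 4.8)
The plan is to prove the equivalence by unwinding the definition of a rolling (Definition~\ref{imbeddef3}) through the local trivialization introduced in Subsection~\ref{local triv} and matching it term-by-term with membership in $E$. The statement is essentially a repackaging of the computations \eqref{dotgamma}--\eqref{B leftright}, so the main task is to organize those computations into a clean ``if and only if''.

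First I would fix an arbitrary point $(q_0,p_0)$ in $Q \oplus P_{\iota,\widehat\iota}$ lying over $(x_0,\widehat x_0) \in M \times \widehat M$, choose neighborhoods $U \ni x_0$, $\widehat U \ni \widehat x_0$ and orthonormal frames $\{e_j\}$, $\{\epsilon_\lambda\}$, $\{\hat e_i\}$, $\{\hat\epsilon_\kappa\}$ as in \eqref{trivi}, so that a curve $\gamma(t) = (q(t),p(t))$ with image in $\pi^{-1}(U \times \widehat U)$ is identified with $(x(t),\widehat x(t),A(t),B(t))$. Then $\dot\gamma(t)$ has the form \eqref{dotgamma}. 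The key point is that conditions (I), (II), (III) of Definition~\ref{imbeddef3} are \emph{pointwise} (in $t$) linear constraints on $\dot\gamma(t)$: condition (I) is the constraint $\dot{\widehat x}(t) = q(t)\dot x(t)$, which, setting $X_0 := \dot x(t) \in T_{x(t)}M$, identifies the $TU$- and $T\widehat U$-components of $\dot\gamma(t)$ with $X_0$ and $q(t)X_0$; while (II) and (III), via the computations leading to \eqref{A left} and \eqref{B leftright}, force
$$\sum_{i,j} \dot a_{ij}\frac{\partial}{\partial a_{ij}} = \mathcal V(X_0)(q,p),\qquad \sum_{\kappa,\lambda} \dot b_{\kappa\lambda}\frac{\partial}{\partial b_{\kappa\lambda}} = \mathcal V^\bot(X_0)(q,p),$$
where on the right $X_0 = \dot x(t)$, using that for the parallel-along-$x(t)$ frame one has $\frac{D}{dt} e_j(x(t)) = \nabla_{\dot x(t)} e_j$ and similarly in the normal bundle, and using Definition~\ref{defdistr} together with the fact (noted after that definition) that $\mathcal V(X)(q,p)$ depends only on the value $X(x) \in T_xM$. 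Conversely, if $\dot\gamma(t) \in E_{\gamma(t)}$ for a.e.\ $t$, then by definition of $E$ there is (for a.e.\ $t$) some $X_0 \in T_{x(t)}M$ with $\dot\gamma(t) = X_0 + q X_0 + \mathcal V(X_0)(q,p) + \mathcal V^\bot(X_0)(q,p)$; comparing $TU$-components gives $X_0 = \dot x(t)$, then the $T\widehat U$-component gives (I), and the $\SO(n)$- and $\SO(\nu)$-components give back (II) and (III) by reversing the same computation. One should note that $E$ is genuinely a smooth rank-$n$ distribution: $X \mapsto \mathcal V(X)$ and $X \mapsto \mathcal V^\bot(X)$ are linear (as remarked after Definition~\ref{defdistr}), and $X_0 \mapsto X_0 + qX_0 + \mathcal V(X_0) + \mathcal V^\bot(X_0)$ is injective since its $TU$-component is already injective, so $E_{(q,p)}$ is an $n$-dimensional subspace varying smoothly with $(q,p)$.

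A couple of bookkeeping points should be addressed for rigor. The frames $\{e_j\}$ etc.\ in the trivialization \eqref{trivi} are arbitrary orthonormal frames over $U$, $\widehat U$, not parallel frames along the curve; so the identification of $\sum \dot a_{ij}\,\partial/\partial a_{ij}$ with $\mathcal V(\dot x(t))$ must be read off from \eqref{A left}, which is exactly the formula \eqref{Vunderh} defining $\mathcal V$, with no parallelism assumption — this is precisely the content of the Remark following Definition~\ref{defdistr}, so I would cite it rather than redo it. Also, absolute continuity of $\gamma$ is equivalent to absolute continuity of $(x,\widehat x,A,B)$ in the trivialization, and the constraint ``$\dot\gamma(t) \in E_{\gamma(t)}$ for a.e.\ $t$'' is exactly the conjunction of (I)--(III) holding for a.e.\ $t$, which matches Definition~\ref{imbeddef3}. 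Finally, since $(q_0,p_0)$ was arbitrary and the conditions are local in $t$, the local equivalence on each $\pi^{-1}(U\times\widehat U)$ patches together to the global statement.

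The main obstacle is not conceptual but notational: one must be careful that the passage between the left-invariant frame $W_{ij}(A)$ and the coordinate frame $\partial/\partial a_{ij}$ (equation \eqref{SObasis} and the surrounding identities in Subsection~\ref{tangent SOn}) is used consistently, and that the skew-symmetry of the coefficients $\langle \nabla_{Z(t)} e_j, e_i\rangle - \langle \nabla_{qZ(t)} qe_j, qe_i\rangle$ — which is what lets one rewrite the sum over all $i,j$ as a sum over $i<j$ against $W_{ij}(A)$ — is invoked at the right moment. All of this has already been carried out in \eqref{dotgamma}--\eqref{B leftright}, so the proof of the Proposition amounts to citing those equations and observing that they say exactly that ``(q,p) is a rolling'' and ``$\dot\gamma \in E$'' are the same condition.
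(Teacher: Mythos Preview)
Your proposal is correct and follows exactly the paper's own approach: the proposition is stated in the paper as a summary of the preceding computations \eqref{dotgamma}--\eqref{B leftright} together with Definition~\ref{defdistr}, and you have simply organized those same computations into an explicit two-way argument. The minor slip about ``parallel-along-$x(t)$ frame'' in your first paragraph is harmless since you correctly observe in the bookkeeping paragraph that \eqref{A left} and \eqref{Vunderh} require no parallelism assumption on the local frames.
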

If we use the same symbol to denote the restriction of $\mathcal{V}(X)$ to $Q$, we also have
\begin{proposition}\label{propdist}
A curve $q(t)$ in $Q$ is an intrinsic rolling if and only if it is a horizontal curve
with respect to the distribution $D$, defined by $$D_q = \left\{X_0 + qX_0 + \mathcal{V}(X_0)(q) | \, X_0 \in T_xM \right\},\quad q \in Q_x.$$
\end{proposition}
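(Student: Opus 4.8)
The plan is to obtain Proposition~\ref{propdist} as the purely tangential specialization of the computations already made in this section, so that essentially nothing new needs to be calculated. First I would record that $D$ is genuinely a distribution. By Definition~\ref{defdistr} the vector field $\mathcal{V}(X)$ on $Q$ is independent of the local trivialization and $\mathcal{V}(X)(q)$ depends only on $X(x)$, so the assignment $X_0 \mapsto X_0 + qX_0 + \mathcal{V}(X_0)(q)$ is a well-defined linear map $T_xM \to T_qQ$; it is injective because its $T_xM$-component is $X_0$, hence $D_q$ is an $n$-dimensional subspace. Smoothness follows by writing $D$, in a trivialization as in~\eqref{trivi} restricted to $Q$, as the span of the smooth vector fields $e_j + q e_j + \mathcal{V}(e_j)$, $j = 1,\dots,n$, on $\pi^{-1}(U\times\widehat U)$.

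Next I would reduce the equivalence to a local statement. Since being horizontal is a pointwise-a.e.\ condition on $\dot q(t)$ and $D$ is defined fibrewise, it suffices to verify the claim on each subinterval whose image lies in some $\pi^{-1}(U\times\widehat U)$ as in Section~\ref{local triv}, with fixed positively oriented orthonormal frames $\{e_j\}$ of $TM|_U$ and $\{\hat e_i\}$ of $T\widehat M|_{\widehat U}$. Identifying $q(t)$ with $(x(t),\widehat x(t),A(t))$ under the trivialization, I would write $\dot q(t)$ exactly as in~\eqref{dotgamma} but without the $\SO(\nu)$-term, set $X_0 := \dot x(t) \in T_{x(t)}M$ (for almost every $t$, where $\dot x(t)$ exists), and compare the three components of $\dot q(t)$ with those of $X_0 + q(t)X_0 + \mathcal{V}(X_0)(q(t))$. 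The $T_{x(t)}M$-component agrees automatically; the $T_{\widehat x(t)}\widehat M$-component agrees precisely when $\dot{\widehat x}(t) = q(t)\dot x(t)$, which is the no-slip condition (I') of Definition~\ref{intrinsdef}.

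The only point needing care is the $\SO(n)$-component. Here I would invoke the chain of equalities established just before~\eqref{A left}: the no-twist identity $q(t)\frac{D}{dt}Z(t) = \frac{D}{dt}q(t)Z(t)$ for every tangent vector field $Z(t)$ along $x(t)$ is equivalent to the coefficient formula for $\dot a_{ij}(t)$ displayed there, which after the manipulation leading to~\eqref{A left} is, by~\eqref{Vunderh}, exactly the statement that the $\SO(n)$-part of $\dot q(t)$ equals the $\SO(n)$-part of $\mathcal{V}(X_0)(q(t))$. Since condition (II') of Definition~\ref{intrinsdef} --- that $Z(t)$ is parallel along $x(t)$ if and only if $q(t)Z(t)$ is parallel along $\widehat x(t)$ --- is equivalent to this covariant-derivative form of ``no twist'' by the remark following Lemma~\ref{equiv1}, the $\SO(n)$-components agree if and only if (II') holds. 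Combining the three comparisons yields that $\dot q(t) \in D_{q(t)}$ for almost every $t$ if and only if $q$ satisfies (I') and (II'), i.e.\ if and only if $q$ is an intrinsic rolling.

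I do not expect a real obstacle here: the argument is entirely bookkeeping, and the main thing to be careful about is to read the computation producing~\eqref{A left} as an \emph{equivalence} (each step there is reversible, since the $\dot a_{ij}$ are completely determined by the $n^2$ scalar conditions $\langle q\frac{D}{dt}e_j - \frac{D}{dt}qe_j,\hat e_i\rangle = 0$) and to pass correctly between the two formulations of ``no twist'', the one in terms of parallel frames used in Definition~\ref{intrinsdef} and the covariant-derivative one used in~\eqref{A left}. A secondary point worth stating explicitly is trivialization-independence: because $\mathcal{V}(X)$ and the subspace $D_q$ do not depend on $h$, the local verifications glue to give the global equivalence.
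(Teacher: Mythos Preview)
Your proposal is correct and follows essentially the same approach as the paper, which treats this proposition as the immediate restriction to $Q$ of the computations already performed for the full distribution $E$ (the paper gives no separate proof, merely the phrase ``we also have''). Your write-up is in fact more careful than the paper's, in that you explicitly flag the reversibility of the chain leading to~\eqref{A left} and the passage between the parallel-frame and covariant-derivative formulations of the no-twist condition.
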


\subsection{Properties of the distribution}
We present some of the properties for the distribution $E$ that basically reflects the results found in
Theorem \ref{number intrinsic} and \ref{unique extension}.
\begin{proposition}
\begin{itemize}
\item[(a)] $E$ is biinvariant under the action of $\SO(\nu)$.
\item[(b)] Let $X$ be a vector field on $M$. If for any $q \in Q_{x,\widehat{x}}$,
if $(A_0q) X(x) = q X(x)$, then
$$\mathcal{V}(X)(A_0q) = A_0 \mathcal{V}(X)(q).$$
Similarly, if $(qA_0) X(x) = q X(x)$, then
$$\mathcal{V}(X)(qA_0) = \mathcal{V}(X)(q) A_0.$$
\end{itemize}
\end{proposition}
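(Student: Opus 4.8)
The strategy is to deduce (a) from the intrinsic description of rollings together with Lemma~\ref{constantM}, and to prove (b) by computing $\mathcal{V}(X)$ in a local trivialisation adapted to the vector $qX(x)$ (respectively $X(x)$).

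\textbf{Part (a).} By the Proposition above identifying the rollings of $M$ on $\widehat M$ with the horizontal curves of $E$, it suffices to show that the left, respectively right, action of each constant $B_0\in\SO(\nu)$ carries rollings to rollings. Such an action affects only the $P_{\iota,\widehat\iota}$-factor of $Q\oplus P_{\iota,\widehat\iota}$, fixing $q$ and the base point $(x,\widehat x)$, and in any trivialisation coming from a pair of frames it is, exactly as for $Q$ in Section~\ref{conspace}, the map $B\mapsto B_0B$, respectively $B\mapsto BB_0$, on the $\SO(\nu)$-factor. Conditions (I) and (II) involve only $q$, $x$ and $\widehat x$, hence are unaffected; for (III) I would invoke Lemma~\ref{constantM}, by which (III) holds precisely when the matrix of $p(t)$ is constant in normal-parallel frames $\{\epsilon_\lambda(t)\}$, $\{\hat\epsilon_\kappa(t)\}$ along $x(t)$, $\widehat x(t)$, and left- or right-multiplication of a constant matrix by the fixed $B_0$ again yields a constant matrix. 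Thus each such action carries rollings to rollings; since $E$ is a smooth distribution, every vector of $E_{(q,p)}$ is the velocity of some rolling through $(q,p)$, whose translate is again a rolling, so the push-forward of that vector lies in $E$ over the translated point, and running the argument also for $B_0^{-1}$ shows that $E$ is biinvariant.

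\textbf{Part (b).} For the first assertion, choose a trivialisation $h$ as in \eqref{trivi} whose frame $\{\hat e_i\}$ on $\widehat M$ is parallel (for the Levi-Civita connection) along the geodesic issuing from $\widehat x$ with velocity $qX(x)$, the frame $\{e_i\}$ on $M$ being arbitrary. Then $\langle\nabla_{qX(x)}\hat e_j,\hat e_i\rangle=0$, so \eqref{Vunderh} represents $\mathcal{V}(X)(q)$ by the vector $A\cdot\Gamma\in T_A\SO(n)$, where $A$ is the matrix of $q$ and $\Gamma\in\mathfrak{so}(n)$ has entries $\Gamma_{ij}=\langle\nabla_{X(x)}e_j,e_i\rangle$. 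The hypothesis $(A_0q)X(x)=qX(x)$ says that $A_0$ fixes the vector $qX(x)$, so the same frame $\{\hat e_i\}$ also gives $\langle\nabla_{(A_0q)X(x)}\hat e_j,\hat e_i\rangle=0$; since the matrix of $A_0q$ is $A_0A$, the vector $\mathcal{V}(X)(A_0q)$ is represented by $(A_0A)\cdot\Gamma$. On the other hand $q\mapsto A_0q$ is the map $A\mapsto A_0A$ in $h$, whose differential carries $A\cdot\Gamma$ to $(A_0A)\cdot\Gamma$, so $A_0\mathcal{V}(X)(q)$ is represented by the same vector; hence $\mathcal{V}(X)(A_0q)=A_0\mathcal{V}(X)(q)$. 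For the second assertion I argue symmetrically: take $\{e_i\}$ on $M$ parallel along the geodesic of $X(x)$, so that $\langle\nabla_{X(x)}e_j,e_i\rangle=0$ and, in the right-invariant description of $T\SO(n)$ (cf.\ \eqref{A right}), $\mathcal{V}(X)(q)$ is represented by $-\widehat\Gamma\cdot A$ with $\widehat\Gamma_{ij}=\langle\nabla_{qX(x)}\hat e_j,\hat e_i\rangle$; now $q\mapsto qA_0$ is $A\mapsto AA_0$ with differential $v\mapsto vA_0$, and $(qA_0)X(x)=qX(x)$ guarantees that the matrix attached to $\mathcal{V}(X)(qA_0)$ on the $\widehat M$-side is again $\widehat\Gamma$, whence $\mathcal{V}(X)(qA_0)$ is represented by $-\widehat\Gamma\cdot(AA_0)=(-\widehat\Gamma\cdot A)A_0$, i.e.\ by $\mathcal{V}(X)(q)A_0$.

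The step I expect to be the main obstacle is the choice of trivialisation in (b): in a generic frame the coefficient matrix in \eqref{Vunderh} picks up an inhomogeneous term from the moving frame and the identity fails, so one must first make the connection term on the manifold \emph{not} fixed by $A_0$ (that is, $\widehat M$ for the left action, $M$ for the right action) vanish by a parallel-frame choice, after which the hypothesis $(A_0q)X(x)=qX(x)$, respectively $(qA_0)X(x)=qX(x)$, is exactly what keeps it zero and reduces everything to the trivial matrix identity $(A_0A)\cdot\Gamma=A_0(A\cdot\Gamma)$, respectively $(-\widehat\Gamma)\cdot(AA_0)=((-\widehat\Gamma)\cdot A)A_0$. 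In (a) the only thing needing care is the normalisation of the $\SO(\nu)$-action as $B\mapsto B_0B$ or $B\mapsto BB_0$ in \emph{every} frame trivialisation, in particular in a normal-parallel one, so that Lemma~\ref{constantM} applies verbatim.
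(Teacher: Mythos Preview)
Your proof is correct, but it takes a genuinely different route from the paper's.

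For (a), the paper proceeds by direct computation in an arbitrary trivialisation: it writes out $\mathcal{V}^\perp(X)(q,B_0p)$ in coordinates and reduces the claim to the identity
\[
\Big\langle \nabla^\perp_{qX}\, B_0\hat\epsilon_\mu,\; B_0\hat\epsilon_\vartheta \Big\rangle
=\Big\langle \nabla^\perp_{qX}\, \hat\epsilon_\mu,\; \hat\epsilon_\vartheta \Big\rangle,
\]
which follows because $B_0$ is a constant orthogonal matrix and hence commutes with the ambient connection $\overline\nabla$; right invariance is handled by the symmetric identity on the $M$-side. Your argument instead lifts to the level of rollings: you use the previous Proposition to identify $E$-horizontal curves with rollings, then invoke Lemma~\ref{constantM} to see that left or right multiplication by a fixed $B_0$ preserves condition~(III). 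This is cleaner and explains \emph{why} the invariance holds (it is exactly Theorem~\ref{unique extension} in infinitesimal form), at the cost of needing the observation that every vector in $E_{(q,p)}$ is realised as the initial velocity of some rolling.

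For (b), the paper simply says the proof is ``totally analogous'' to its computation in (a). Your approach is again different and arguably more transparent: by choosing $\{\hat e_i\}$ with $\nabla_{qX(x)}\hat e_i|_{\widehat x}=0$ (e.g.\ a frame obtained by radial parallel transport from $\widehat x$), the $\widehat M$-term in \eqref{Vunderh} drops out and the statement becomes the tautology $(A_0A)\cdot\Gamma=A_0(A\cdot\Gamma)$; the hypothesis $(A_0q)X(x)=qX(x)$ is exactly what guarantees the same frame kills the $\widehat M$-term at $A_0q$. The dual choice on $M$ handles the right action. One small point: you only need the connection coefficients to vanish \emph{at the point} $\widehat x$ in the direction $qX(x)$, so it is enough that such a frame extends smoothly to a neighbourhood (which it does, as you note); insisting on parallelism along the full geodesic is harmless but not required.
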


\begin{proof}
To prove (a), we only need to show that $B_0 \cdot \mathcal{V}^\perp(X)(q,p) = \mathcal{V}^\perp(X)(q,B_0p)$
and $\mathcal{V}^\perp(X)(q,p) \cdot B_0 = \mathcal{V}^\perp(X)(q,p B_0)$ for any
$B_0 = (\tilde b_{\kappa\lambda})_{\kappa\lambda = 1}^\nu \in$.
First, let $h$ be a local trivialization as in \eqref{trivi} and $(q,p) \in (Q \oplus P_{\iota,\widehat{\iota}})_x$.
Then, from \eqref{B leftright}, to show $\mathcal{V}^\perp(X)(q,B_0p) = B_0 \cdot \mathcal{V}^\perp(X)(q,p)$,
it is sufficient to show that if $B_0 = \left(\tilde{b}_{\kappa\lambda}\right)_{\kappa,\lambda = 1}^\nu$, then
$$\sum_{\alpha, \beta, \mu, \vartheta =1}^\nu \tilde{b}_{\alpha\mu} b_{\mu \lambda} \tilde{b}_{\beta\vartheta}
b_{\vartheta\kappa} \left\langle \nabla_{qX} \hat{\epsilon}_\alpha, \hat{\epsilon}_\beta \right\rangle
= \left\langle \nabla^\perp_{qX} p \epsilon_\lambda, p\epsilon_\kappa \right\rangle.$$
Since $$\sum_{\alpha, \beta, \mu, \vartheta =1}^\nu \tilde{b}_{\alpha\mu} b_{\mu \lambda} \tilde{b}_{\beta\vartheta} b_{\vartheta\kappa} \left\langle \nabla^\perp_{qX} \hat{\epsilon}_\alpha, \hat{\epsilon}_\beta \right\rangle
= \sum_{\vartheta, \mu =1}^\nu b_{\mu \lambda}
b_{\vartheta\kappa} \left\langle \sum_{\alpha =1}^\nu \tilde{b}_{\alpha\mu} \nabla^\perp_{qX} \hat{\epsilon}_\alpha,
\sum_{\beta = 1}^\nu \tilde{b}_{\beta \vartheta} \hat{\epsilon}_\beta \right\rangle,$$
we need to show that
$$\left\langle \sum_{\alpha =1}^\nu \tilde{b}_{\alpha\mu} \nabla^\perp_{qX} \hat{\epsilon}_\alpha,
\sum_{\beta = 1}^\nu \tilde{b}_{\beta \vartheta} \hat{\epsilon}_\beta \right\rangle
= \left\langle \nabla^\perp_{qX} \hat{\epsilon}_\mu, \hat{\epsilon}_\vartheta \right\rangle.$$
To obtain this, let $\overline{Y}$ be any exstension of the vector field $qX$, to $\mathbb{R}^{n+\nu}$.
Let $\overline{\hat{\epsilon}_\mu}$ and $\overline{\hat{\epsilon}_\vartheta}$ be any exstension
of $\hat{\epsilon}_\mu$ and $\hat{\epsilon}_\vartheta$, and write
$$\overline{\hat{\epsilon}_\mu} = \sum_{i = 1}^{n+\nu} f_i(\bar{x}) \frac{\partial}{\partial \bar{x}_i}.$$
Then
$$\left\langle \sum_{\alpha =1}^\nu \tilde{b}_{\alpha\mu} \nabla^\perp_{qX} \hat{\epsilon}_\alpha,
\sum_{\vartheta = 1}^\nu \tilde{b}_{\beta \vartheta} \hat{\epsilon}_\beta \right\rangle
= \left\langle \nabla^\perp_{qX} B_0 \hat{\epsilon}_\mu,
 B_0 \hat{\epsilon}_\vartheta) \right\rangle$$
$$= \left\langle \overline{\nabla}_{\bar{Y}} B_0 \overline{\hat{\epsilon}_\mu},
 B_0 \overline{\hat{\epsilon}_\vartheta} \right\rangle = \left\langle B_0
 \sum_{i = 1}^{n+\nu} \overline{Y} f_i(\bar{x}) \frac{\partial}{\partial \bar{x}_i},
 B_0 \overline{\hat{\epsilon}_\vartheta} \right\rangle = \left\langle B_0 \overline{\nabla}_{Y}
 B_0 \overline{\hat{\epsilon}_\mu}, B_0 \overline{\hat{\epsilon}_\vartheta} \right\rangle$$
 $$= \left\langle B_0 \nabla^\bot_{qX}
 \hat{\epsilon}_\mu, B_0 \hat{\epsilon}_\vartheta \right\rangle
 = \left\langle \nabla^\bot_{qX} \hat{\epsilon}_\mu, \hat{\epsilon}_\vartheta \right\rangle.$$
We show right invariance by showing that
$$\sum_{\alpha, \beta, \mu, \vartheta = 1}^\nu \tilde{b} _{\mu \alpha} b_{\lambda\mu}
\tilde{b} _{\vartheta \beta} b_{\kappa\vartheta}
\left\langle \nabla_{Z(t)}^\bot \epsilon_\alpha, \epsilon_\beta \right\rangle
= \left\langle \nabla_{Z(t)}^\bot p^{-1}\hat{\epsilon}_\lambda, p^{-1} \hat{\epsilon}_\kappa \right\rangle,$$
in a similar way.

The proof of (b) is totally analogous to the proof of (a)
\end{proof}

\section{A controllable example: $S^n$ rolling over $\mathbb{R}^n$}
\subsection{Formulation of the rolling}
We want to illustrate the properties of the distributions, by proving that the unit sphere $S^n$ in $\mathbb{R}^{n+1}$ rolling over $\mathbb{R}^n$ is a completely controllable system, by showing that the distribution $D$ is bracket generating. This result was obtained in \cite{Zimm}, but we want to present this example here in order to illustrate the advantages of our theory.

Consider the unit sphere $S^n$ as the submanifold of the Euclidean space $\mathbb{R}^{n+1}$,
$$S^n = \left\{ (x_0,\dots, x_n) \in \mathbb{R}^{n+1} | \, x_0^2 + \cdots + x_n^2 = 1 \right\},$$
with the induced metric.

For an arbitrary point $\tilde x =(\tilde x_0, \dots, \tilde x_n)\in S^n$, at least one of the coordinates $\tilde x_0, \dots, \tilde x_n$ does not vanish. Without lost of generality, we may assume that $\tilde x_n\neq0$, and consider the neighborhood
$$U = \{(x_0, \dots, x_n) \in S^n | \, \pm x_n > 0 \} \, ,$$
where the choice of the $\pm$ sign depends on the sign of $\tilde x_n$. To simplify the notation, we define the following functions on $U$
$$s_j(x) = \sum_{r =j}^n x_r^2 \, .$$
These functions are always strictly positive on $U$, and we use them to define an orthonormal basis of $TU$. We will write simply $s_j$ instead of $s_j(x)$, since dependence of $x$ is clear from the context. Define the following vector fields on $U$
\begin{equation}\label{basisTSn}
e_j = \sqrt{\frac{s_j}{s_{j-1}}} \left(- \frac{\partial}{\partial x_{j-1}} + \frac{x_{j-1}}{s_j}
\sum_{r = j}^n x_r \frac{\partial}{\partial x_r} \right), \qquad j = 1, \dots, n.
\end{equation}
These vector fields form an orthonormal basis of the tangent space over $U$.
We set $\hat{e}_i=\dfrac{\partial}{\partial \widehat{x}_i}$ to be the standard basis of $\mathbb{R}^n$.

Before proceeding with the necessary calculations, let us state two technical Lemmas whose proofs can be found in section \ref{GammaSn} and \ref{DerGamma}.

\begin{lemma} \label{LemmaGamma}
Let $1 \leq i < j \leq n$. Then
$$\left\langle \nabla_{e_k} e_j , e_i \right\rangle = - \frac{x_{i-1} \delta_{k,j}}
{\sqrt{s_{i-1} s_i}}
= - \left\langle \nabla_{e_k} e_i , e_j \right\rangle,$$
for any $k = 1, \dots, n$.
\end{lemma}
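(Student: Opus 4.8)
The statement is a purely local computation on the chart $U\subset S^n$ using the explicit orthonormal frame $\{e_j\}_{j=1}^n$ from \eqref{basisTSn}. The plan is to compute the Levi-Civita connection coefficients $\langle\nabla_{e_k}e_j,e_i\rangle$ directly from the Koszul-type formula, exploiting that the ambient connection on $\mathbb R^{n+1}$ is flat so that $\nabla_{e_k}e_j=(\,\overline\nabla_{e_k}e_j\,)^\top$, i.e. it is the flat directional derivative of the coefficient functions followed by orthogonal projection onto $TU$. Since the $e_j$ are written explicitly as $\real^{n+1}$-valued functions of $x$, the quantity $\overline\nabla_{e_k}e_j$ is obtained by letting the vector field $e_k$ act on each component function of $e_j$, and then $\langle\overline\nabla_{e_k}e_j,e_i\rangle=\langle\overline\nabla_{e_k}e_j,e_i\rangle$ already since $e_i$ is tangent, so the projection is automatic in the inner product.

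First I would record the basic facts about the functions $s_j=\sum_{r=j}^n x_r^2$ restricted to the sphere: $s_0=1$, $e_k(s_j)$ is elementary to compute, and the orthonormality relations $\langle e_i,e_j\rangle=\delta_{ij}$ (which one checks from \eqref{basisTSn} using $s_0=1$ on $S^n$). Next, since $\langle e_j,e_i\rangle$ is constant, differentiating gives the skew-symmetry $\langle\nabla_{e_k}e_j,e_i\rangle=-\langle\nabla_{e_k}e_i,e_j\rangle$ for free, which already yields the last equality in the statement; so it suffices to prove the middle formula for $i<j$. Then I would plug the explicit expression \eqref{basisTSn} for $e_j$ into $\overline\nabla_{e_k}e_j$, differentiate the prefactor $\sqrt{s_j/s_{j-1}}$ and the vector part $-\partial_{x_{j-1}}+\frac{x_{j-1}}{s_j}\sum_{r\ge j}x_r\partial_{x_r}$ along $e_k$, and pair the result with $e_i$. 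The key structural observation making the computation tractable is that $e_i$ for $i<j$ only involves $\partial_{x_{i-1}}$ and $\partial_{x_r}$ for $r\ge i$, while it is orthogonal to $e_j$; after taking the inner product, most terms cancel because of the orthonormality of the frame and the specific index ranges, and the surviving contribution is the one coming from differentiating the $\partial_{x_{j-1}}$-direction of $e_j$ (which is why the Kronecker delta $\delta_{k,j}$ appears: only $e_k=e_j$ has a $\partial_{x_{j-1}}$-component hitting the relevant coordinate).

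The main obstacle is purely bookkeeping: carefully tracking which partial-derivative directions survive the pairing with $e_i$ and verifying the cancellations, since the frame vectors have overlapping supports in the coordinate directions $\partial_{x_r}$ for large $r$. A clean way to organize this is to introduce the dual coframe $\{e^j\}$ and compute $e^i(\overline\nabla_{e_k}e_j)$, or equivalently to expand $\overline\nabla_{e_k}e_j$ in the frame $\{e_l\}$ and read off the $e_i$-coefficient; either way one reduces to a finite sum of terms each of which is a rational function of the $x_r$ and the $s_j$, and the identity $x_{j-1}^2+s_j=s_{j-1}$ together with $s_0=1$ collapses them to the stated answer. I would present the computation for a fixed triple $(i,j,k)$ with $i<j$, split into the cases $k=j$ and $k\neq j$, showing the formula vanishes in the latter and equals $-x_{i-1}/\sqrt{s_{i-1}s_i}$ in the former; the remaining equality then follows from the skew-symmetry noted above. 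The detailed manipulations are deferred to Section~\ref{GammaSn}, as announced.
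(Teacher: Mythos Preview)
Your proposal is correct and follows essentially the same approach as the paper's proof in Section~\ref{GammaSn}: compute $\overline\nabla_{e_k}e_j$ directly in the ambient $\mathbb R^{n+1}$ and pair with $e_i$, organized by a case split on the relative position of $k$ and $j$. The paper structures the bookkeeping by first finding $\overline\nabla_{\partial/\partial x_k}e_j$ and then assembling $\overline\nabla_{e_k}e_j$, splitting into the three cases $k=j$, $k<j$ (where in fact $\overline\nabla_{e_k}e_j=0$ outright), and $k>j$ (where $\overline\nabla_{e_k}e_j$ turns out proportional to $e_k$ and hence orthogonal to $e_i$ for $i<j<k$); your heuristic about the $\partial_{x_{j-1}}$-component is not quite the mechanism behind the $\delta_{k,j}$, but the computation itself will sort this out.
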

\begin{remark}
The properties of the connection $\nabla$ have the following consequences:
\begin{itemize}
\item The compatibility of $\nabla$ with the metric and $\left\langle e_i, e_j \right\rangle = \delta_{i,j}$, imply that
$$\left\langle \nabla_{e_k} e_j , e_i \right\rangle = - \left\langle \nabla_{e_k} e_i , e_j \right\rangle.$$
In particular, $\left\langle \nabla_{e_k} e_i , e_i \right\rangle = 0.$
\item The symmetry of $\nabla$, imply that if $l < k$, then
$$[e_k, e_l] = \nabla_{e_k} e_l - \nabla_{e_l} e_k
= \sum_{i = 1}^n \left\langle \nabla_{e_k} e_l - \nabla_{e_l} e_k, e_i \right\rangle e_i
= \frac{x_{l-1}}{\sqrt{s_{l-1}s_l}} e_k.$$
\end{itemize}
\end{remark}

\begin{lemma} \label{LemmaDerGamma} For $k,l = 1, 2, \dots, n$
$$e_k\left(\frac{x_{l-1}}{\sqrt{s_{l-1} s_j}} \right)
= \left\{\begin{array}{cc} 0 & k > l \\ - \frac{1}{s_k} & k = l \\
& \\
- \frac{x_{k-1} x_{l-1}}{\sqrt{s_{k-1}s_k s_k s_{j-1}}} & k < l. \end{array} \right.$$
\end{lemma}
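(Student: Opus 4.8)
This is a direct coordinate computation, and the plan is to reduce everything to two elementary facts: that $\frac{\partial}{\partial x_m} s_r = 2x_m$ if $m \geq r$ and $\frac{\partial}{\partial x_m} s_r = 0$ otherwise, and the telescoping identity $s_{r-1} = x_{r-1}^2 + s_r$ relating consecutive functions. Writing $f = \dfrac{x_{l-1}}{\sqrt{s_{l-1} s_l}}$ (the quantity that appears as a connection coefficient in Lemma~\ref{LemmaGamma}; we read the index $j$ in the statement as $l$), the first step is to compute its partial derivatives. Using the quotient rule together with $s_{l-1} = x_{l-1}^2 + s_l$, one obtains the trichotomy $\frac{\partial}{\partial x_m} f = 0$ for $m < l-1$, then $\frac{\partial}{\partial x_{l-1}} f = \sqrt{s_l}\, s_{l-1}^{-3/2}$, and $\frac{\partial}{\partial x_m} f = -x_{l-1} x_m (s_{l-1} + s_l)(s_{l-1} s_l)^{-3/2}$ for $m \geq l$. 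It is also worth recording that $f$ is homogeneous of degree $-1$ in $(x_0,\dots,x_n)$, so Euler's identity gives $\sum_{r=0}^n x_r \frac{\partial}{\partial x_r} f = -f$; this shortens the case $k<l$.

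The second step is to substitute these into the coordinate expression $e_k = \sqrt{\tfrac{s_k}{s_{k-1}}}\bigl(-\frac{\partial}{\partial x_{k-1}} + \frac{x_{k-1}}{s_k}\sum_{r=k}^n x_r \frac{\partial}{\partial x_r}\bigr)$ coming from~\eqref{basisTSn}, and to split into three cases according to the position of $k-1$ relative to $l-1$. If $k > l$, then $k-1 \geq l$ and every index $r \geq k$ in the sum also satisfies $r \geq l$, so only the third derivative formula is used throughout; the sum produces the factor $\sum_{r=k}^n x_r^2 = s_k$, which cancels the $1/s_k$, and the two terms in the bracket cancel identically, giving $0$. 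If $k < l$, then $\frac{\partial}{\partial x_{k-1}} f = 0$ and every index $r<k$ likewise has $\frac{\partial}{\partial x_r} f = 0$, so $\sum_{r=k}^n x_r \frac{\partial}{\partial x_r} f = \sum_{r=0}^n x_r \frac{\partial}{\partial x_r} f = -f$ by homogeneity, whence $e_k(f) = -\frac{x_{k-1}}{\sqrt{s_{k-1}s_k}}\,f$, which is exactly the product claimed. Finally, if $k = l$, then $\frac{\partial}{\partial x_{l-1}} f = \sqrt{s_l}\, s_{l-1}^{-3/2}$ and the sum runs over $r \geq l$; collecting everything over the common denominator $(s_{l-1}s_l)^{3/2}$, the key simplification is the difference of squares $x_{l-1}^2 (s_{l-1}+s_l) = (s_{l-1}-s_l)(s_{l-1}+s_l) = s_{l-1}^2 - s_l^2$, after which the numerator collapses to $-s_{l-1}^2$ and the remaining prefactors reduce to $-1/s_l = -1/s_k$.

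The argument involves no conceptual difficulty; the only points demanding care are bookkeeping which of the three derivative formulas applies to each term of $\sum_{r=k}^n$ in each of the three regimes, and carrying out the difference-of-squares collapse in the case $k=l$ at the right moment (i.e.\ substituting $x_{l-1}^2 = s_{l-1}-s_l$ only after clearing denominators). Everything else is routine algebra with the $s_r$'s.
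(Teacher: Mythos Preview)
Your proof is correct and follows essentially the same route as the paper: compute the partial derivatives of $f=x_{l-1}/\sqrt{s_{l-1}s_l}$, substitute into the coordinate expression for $e_k$, and split into the three cases $k>l$, $k=l$, $k<l$. The one genuine difference is your use of Euler's identity for homogeneous functions in the case $k<l$: since $f$ is homogeneous of degree $-1$, you get $\sum_{r=k}^n x_r\,\partial_{x_r}f=\sum_{r=0}^n x_r\,\partial_{x_r}f=-f$ in one line, whereas the paper carries out the sum explicitly and simplifies. This is a neat shortcut that buys you a cleaner derivation in that case; the paper's version, by contrast, packages the partial-derivative formulas into a single expression via a Heaviside function before unpacking it case by case. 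Both arrive at the same arithmetic in the end.
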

It is a direct consequence of the choice of the vector fields $\hat{e}_k$ that
$\nabla_{\hat{e}_k} \hat{e}_l = 0$, and $[\hat{e}_k, \hat{e}_l] = 0$ for all $k, l = 1, \dots, n$.

Consider the vector fields $X_k=e_k + q e_k + \mathcal{V}(e_k)$ which generate the distribution $\mathcal{D}$, introduced in Proposition~\ref{propdist}, restricted to $U$. In this case, we have the explicit form
$$X_k(x,\hat{x},A) = e_k(x) + \sum_{i =1}^n a_{ik} \hat{e}_i(\widehat{x}) -
\sum_{i =1}^{k-1} \frac{x_{i-1}}{\sqrt{s_{i-1}s_i}} W_{ik}(A).$$

In order to determine the commutators $[X_k, X_l]$, let us assume that $k > l$.
Then
$$[X_k,X_l] = [e_k, e_l] - \sum_{i=1}^{k-1} \sum_{j =1}^n  \frac{x_{i-1}}{\sqrt{
s_{i-1} s_i}} W_{ik} a_{jl} \hat{e}_j + \sum_{j-1}^{l-1} \sum_{i =1}^n  \frac{x_{j-1}}{\sqrt{
s_{j-1} s_j}} W_{jl} a_{ik} \hat{e}_i$$
$$- \sum_{j =1}^{l-1} e_k\left(\frac{x_{j-1}}{\sqrt{
s_{j-1} s_j}} \right) W_{jl} + \sum_{i =1}^{k-1} e_l\left(\frac{x_{i-1}}{\sqrt{
s_{i-1}s_i}} \right) W_{ik}+ \sum_{i =1}^{k-1} \sum_{j =1}^{l-1} \frac{x_{i-1} x_{j-1}}{\sqrt{
s_{i-1}s_i s_{j-1} s_{j-1}}} [W_{ik}, W_{jl}]$$
\vspace{0.5cm}
$$= \frac{x_{l-1}}{\sqrt{s_{l-1} s_l}} e_k - \sum_{i=1}^{k-1} \sum_{j =1}^n  \frac{x_{i-1}}{\sqrt{
s_{i-1} s_i}} \left(a_{ji} \delta_{k,l} - a_{jk} \delta_{i,l}\right) \hat{e}_j
+ \sum_{j-1}^{l-1} \sum_{i =1}^n  \frac{x_{j-1}}{\sqrt{s_{j-1} s_j}} \left(a_{ij} \delta_{l,k} - a_{il} \delta_{j,k}\right) \hat{e}_i$$
$$- \frac{1}{s_l} W_{lk} - \sum_{i = l+1}^{k-1} \frac{x_{i-1} x_{l-1}}
{\sqrt{s_{l-1} s_l s_{i-1} s_i}} W_{ik}
+ \sum_{i =1}^{k-1} \sum_{j =1}^{l-1} \frac{x_{i-1} x_{j-1}}{\sqrt{
s_{i-1} s_i s_{j-1} s_j}} (-\delta_{i,l} W_{jk} +\delta_{i,j} W_{lk})$$
\vspace{0.5cm}
$$= \frac{x_{l-1}}{\sqrt{s_{l-1} s_l}} \left(e_k + \sum_{j =1}^n a_{jk} \hat{e}_j
- \sum_{i = l+1}^{k-1} \frac{x_{i-1}}{\sqrt{s_{i-1} s_i}} W_{ik}
- \sum_{j =1}^{l-1} \frac{x_{j-1}}{\sqrt{s_{j-1} s_j}}W_{jk} \right)$$
$$- \frac{1}{s_l} W_{lk} + \sum_{j =1}^{l-1} \frac{x_{j-1}^2}{s_{j-1} s_j} W_{lk}$$
\vspace{0.5cm}
$$= \frac{x_{l-1}}{\sqrt{s_{l-1} s_l}} \left(
e_k + \sum_{j=1}^n a_{jk} \hat{e}_j - \sum_{i = 1}^{k-1} \frac{x_{i-1}}
{\sqrt{s_{i-1} s_i}} W_{ik}  \right)+ \left(- \frac{1}{s_l} + \frac{x_{l-1}^2}{s_{l-1} s_l} + \sum_{j =1}^{l-1} \left(\frac{1}{
s_j} -\frac{1}{s_j} \right)\right) W_{lk}$$
$$= \frac{x_{l-1}}{\sqrt{s_{l-1} s_l}} X_k- W_{lk}.$$
Define the vector fields $Y_{lk}$, for $l<k$, by
$$Y_{lk} := [X_l,X_k] + \frac{x_{l-1}}{\sqrt{s_{l-1} s_l}} X_k= W_{lk}.$$
Finally, let
$$Z_1 = [Y_{12}, X_2] = \sum_{i = 1}^n a_{i1} \hat{e}_i,$$
$$Z_k = [X_1,Y_{1k}] = \sum_{i=1}^n a_{ik} \hat{e}_i\, , \qquad k = 2,\dots, n.$$
We conclude that the entire tangent space is spanned by $\{X_k\}_{k =1}^n,  \{Y_{lk}\}_{1\leq l < k \leq n}$ and
$\{Z_k\}_{k=1}^n$. Hence, $D$ is a regular bracket generating distribution of step 3, which implies that the system of rolling $S^n$ over $\mathbb{R}^n$ is completely controllable.

\subsection{Proof of Lemma \ref{LemmaGamma}} \label{GammaSn}
The proof of this Lemma is rather technical and it consists mostly of rewriting formulas in an appropriate way. We begin with some observations.
\begin{itemize}
\item $s_{j-1} = x_{j-1}^2 + s_j$.
\item If $H$ is the Heaviside function
$$H(x) = \left\{ \begin{array}{cc} 1 & \text{when } x \geq 0 \\ 0 & \text{when } x < 0 \end{array} \right. \, ,$$
then $$\frac{\partial}{\partial x_k} s_j = 2 x_k H(k-j).$$
\item For any integer $j$,
$$H(j) = \delta_{0,j} + H(j-1).$$
\item Due to the identity
$$\left\langle \frac{\partial}{\partial x_k} , e_i \right\rangle
= \sqrt{\frac{s_i}{s_{i-1}}}\left(- \delta_{k, i-1} + \frac{x_k x_{i-1} H(k-i)}{s_i} \right),$$
we obtain
\begin{align*} \left\langle \sum_{r=k}^n x_r \frac{\partial}{\partial x_r}, e_i\right\rangle & =
\sqrt{\frac{s_i}{s_{i-1}}}\sum_{r = k}^n \left(- x_r \delta_{r, i-1} + \frac{ x_{i-1} x_r^2 H(r-i)}{s_i} \right) \\
& = x_{i-1} \sqrt{\frac{s_i}{s_{i-1}}} \left(\frac{s_{\max\{k,i\}}}{s_i}-H(i-k-1)\right) \\
& = x_{i-1} \sqrt{\frac{s_i}{s_{i-1}}} \left(\delta_{i,k} + \frac{s_k H(k-i-1)}{s_i}\right). \end{align*}
\end{itemize}
\vspace{0.5cm}
\paragraph{\it Step 1: Finding $\overline{\nabla}_{\frac{\partial}{\partial x_k}} e_j$}
We calculate
\begin{align*} \frac{\partial}{\partial x_k} \sqrt{\frac{s_j}{s_{j-1}}} & = \frac{x_kH(k-j)}{\sqrt{s_{j-1} s_j}}
- x_k H(k-j+ 1)\sqrt{\frac{s_j}{s_{j-1}^3}} \\
& = x_kH(k-j) \sqrt{\frac{s_j}{s_{j-1}}}\left(\frac{1}{s_j} - \frac{1}{s_{j-1}}\right)
- x_k \delta_{k,j-1} \sqrt{\frac{s_j}{s_{j-1}^3}} \\
& = \sqrt{\frac{s_j}{s_{j-1}}} \left(\frac{x_k x_{j-1}^2 H(k-j)}{s_{j-1} s_j}
- \frac{x_k \delta_{k,j-1}}{s_{j-1}} \right) \end{align*}
and get
$$\overline{\nabla}_{\frac{\partial}{\partial x_k}} e_j =
\left(\frac{x_k x_{j-1}^2 H(k-j)}{s_{j-1} s_j} - \frac{x_k \delta_{k,j-1}}{s_{j-1}} \right) e_j$$
$$+ \sqrt{\frac{s_j}{s_{j-1}}}\left(\frac{\delta_{k,j-1}}{s_j} \sum_{r = j}^n x_r \frac{\partial}{\partial x_r}
- \frac{2 x_{j-1} x_k H(k-j)}{s_j^2} \sum_{r = j}^n x_r \frac{\partial}{\partial x_r}
+ \frac{x_{j-1}H(k-j)}{s_j}\frac{\partial}{\partial x_k}\right)$$
\vspace{0.5cm}
$$= \left(\frac{x_k x_{j-1}^2 H(k-j)}{s_{j-1} s_j} - \frac{2 x_k H(k-j)}{s_j} - \frac{x_k \delta_{k,j-1}}{s_{j-1}} \right) e_j$$
$$+ \sqrt{\frac{s_j}{s_{j-1}}}\left(\frac{\delta_{k,j-1}}{s_j} \sum_{r = j}^n x_r \frac{\partial}{\partial x_r}
- \frac{2 x_k H(k-j)}{s_j} \frac{\partial}{\partial x_{j-1}}
+ \frac{x_{j-1}H(k-j)}{s_j}\frac{\partial}{\partial x_k}\right)$$
\vspace{0.5cm}
$$= - \left(x_ k H(k-j) \frac{s_{j-1} + s_j }{s_{j-1} s_j} + \frac{x_k \delta_{k,j-1}}{s_{j-1}} \right) e_j$$
$$+ \frac{1}{s_j} \sqrt{\frac{s_j}{s_{j-1}}}\left(\delta_{k,j-1} \sum_{r = j}^n x_r \frac{\partial}{\partial x_r}
+ H(k-j)\left(x_{j-1}\frac{\partial}{\partial x_k} - 2 x_k \frac{\partial}{\partial x_{j-1}}\right) \right)$$
\paragraph{\it Step 2: Calculating $\overline{\nabla}_{e_k} e_j$}
Using Step 1 and formula~\eqref{basisTSn}, we are able to compute
$$\overline{\nabla}_{e_k} e_j = \sqrt{\frac{s_k}{s_{k-1}}} \left(-
\overline{\nabla}_{\frac{\partial}{\partial x_{k-1}}} e_j + \frac{x_{k-1}}{s_k}
\sum_{l = k}^n x_l \overline{\nabla}_{\frac{\partial}{\partial x_l}} e_j \right)$$
\vspace{0.5cm}
$$= \sqrt{\frac{s_k}{s_{k-1}}} \left(
\left(x_ {k-1} H(k-j-1) \frac{s_{j-1} + s_j }{s_{j-1} s_j} + \frac{x_{k-1} \delta_{k,j}}{s_{j-1}} \right) e_j \right.$$
$$\left.- \frac{1}{s_j} \sqrt{\frac{s_j}{s_{j-1}}}\left(\delta_{k,j} \sum_{r = j}^n x_r \frac{\partial}{\partial x_r}
+ H(k-j-1)\left(x_{j-1}\frac{\partial}{\partial x_{k-1}} - 2 x_{k-1} \frac{\partial}{\partial x_{j-1}}\right) \right) \right.$$
$$\left. + \frac{x_{k-1}}{s_k} \left(
- \left(s_{\max\{j,k\}} \frac{s_{j-1} + s_j }{s_{j-1} s_j} + \frac{x^2_{j-1} H(j-k-1)}{s_{j-1}} \right) e_j \right. \right.$$
$$\left. \left.+ \frac{1}{s_j} \sqrt{\frac{s_j}{s_{j-1}}}\left( x_{j-1} H(j-k-1) \sum_{r = j}^n x_r \frac{\partial}{\partial x_r}
+ x_{j-1}\sum_{r = \max\{j,k\}}^n x_r \frac{\partial}{\partial x_r} - 2 s_{\max\{j,k\}} \frac{\partial}{\partial x_{j-1}}\right)  \right)\right)$$
\paragraph{\it Step 3: Obtaining $\left\langle \nabla_{e_k} e_j, e_i \right\rangle$}
We calculate it case by case,
\begin{itemize}
\item if $k = j$, then
\begin{align*} \overline{\nabla}_{e_k} e_k & = \sqrt{\frac{s_k}{s_{k-1}}} \left(
\frac{x_{k-1}}{s_{k-1}} e_k - \frac{1}{s_k} \sqrt{\frac{s_k}{s_{k-1}}} \sum_{r = k}^n x_r \frac{\partial}{\partial x_r} \right. \\
& \quad \left. + \frac{x_{k-1}}{s_k} \left(
- \frac{s_{k-1} + s_k }{s_{k-1}} e_k + \frac{1}{s_k} \sqrt{\frac{s_k}{s_{k-1}}}\left(x_{k-1}\sum_{r = k}^n x_r \frac{\partial}{\partial x_r} - 2 s_k \frac{\partial}{\partial x_{k-1}}\right)  \right)\right) \end{align*}
\vspace{0.5cm}
\begin{align*} \qquad & = \sqrt{\frac{s_k}{s_{k-1}}} \left(
\frac{x_{k-1} s_k - x_{k-1} s_k - x_{k-1} s_{k-1}}{s_{k-1} s_k} e_k - \frac{1}{s_k} \sqrt{\frac{s_k}{s_{k-1}}} \sum_{r = k}^n x_r \frac{\partial}{\partial x_r} \right. \\
& \quad \left. + \frac{x_{k-1}}{s_k} \left( e_k -
\sqrt{\frac{s_k}{s_{k-1}}} \frac{\partial}{\partial x_{k-1}} \right)\right)\\ \\
& = - \frac{1}{s_{k-1}} \sum_{r = k-1}^n x_r \frac{\partial}{\partial x_r} \end{align*}
and so
$$\left\langle \nabla_{e_k} e_k , e_i \right\rangle =
- \frac{x_{i-1}}{s_{k-1}} \sqrt{\frac{s_i}{s_{i-1}}} \left(\delta_{i,k-1} + \frac{s_{k-1} H(k-i-2)}{s_i}\right)
= - \frac{x_{i-1} H(k-i-1)}{\sqrt{s_{i-1} s_i}} $$
\item if $k <j$, then
\begin{align*} \overline{\nabla}_{e_k} e_j & = \sqrt{\frac{s_k}{s_{k-1}}} \left(\frac{x_{k-1}}{s_k} \left(
- \left(\frac{s_{j-1} + s_j }{s_{j-1}} + \frac{x^2_{j-1}}{s_{j-1}} \right) e_j \right. \right. \\
& \quad \left. \left.+ \frac{1}{s_j} \sqrt{\frac{s_j}{s_{j-1}}}\left( x_{j-1} \sum_{r = j}^n x_r \frac{\partial}{\partial x_r}
+ x_{j-1}\sum_{r = j}^n x_r \frac{\partial}{\partial x_r} - 2 s_j \frac{\partial}{\partial x_{j-1}}\right)  \right)\right) \\
& = \sqrt{\frac{s_k}{s_{k-1}}} \left(\frac{x_{k-1}}{s_k} \left(
- 2 e_j + 2e_j \right)\right) = 0 \end{align*}
\item if $j < k$, then
$$\overline{\nabla}_{e_k} e_j = \sqrt{\frac{s_k}{s_{k-1}}} \left(
\left(x_ {k-1} \frac{s_{j-1} + s_j }{s_{j-1} s_j} \right) e_j - \frac{1}{s_j} \sqrt{\frac{s_j}{s_{j-1}}}
\left(x_{j-1}\frac{\partial}{\partial x_{k-1}} - 2 x_{k-1} \frac{\partial}{\partial x_{j-1}}\right)  \right.$$
$$\left. + \frac{x_{k-1}}{s_k} \left(
- \left(s_k \frac{s_{j-1} + s_j }{s_{j-1} s_j} \right) e_j + \frac{1}{s_j} \sqrt{\frac{s_j}{s_{j-1}}}\left(
x_{j-1}\sum_{r = k}^n x_r \frac{\partial}{\partial x_r} - 2 s_k \frac{\partial}{\partial x_{j-1}}\right)  \right)\right)$$
\vspace{0.5cm}
$$= \sqrt{\frac{s_k s_j}{s_{k-1}s_{j-1}}} \left(
- \frac{x_{j-1}}{s_j} \frac{\partial}{\partial x_{k-1}} + \frac{2 x_{k-1}}{s_j} \frac{\partial}{\partial x_{j-1}}
+  \frac{x_{k-1} x_{j-1}}{s_k s_j}
\sum_{r = k}^n x_r \frac{\partial}{\partial x_r} -  \frac{2 x_{k-1}}{s_j}\frac{\partial}{\partial x_{j-1}}\right)$$
$$= \frac{x_{j-1}}{\sqrt{s_{j-1} s_j}} e_k$$
\end{itemize}
The conclusion is that all the Christoffel symbols $\Gamma_{kj}^i =
\left\langle \nabla_{e_k} e_j, e_i \right\rangle$ vanish, except for
$$\Gamma_{kj}^k = - \Gamma_{kk}^j = \frac{x_{j-1}}{\sqrt{s_{j-1} s_j}}, \qquad j < k.$$

\subsection{Proof of Lemma \ref{LemmaDerGamma}} \label{DerGamma}
We need to consider three cases. Observe that
\begin{align*} \frac{\partial}{\partial x_k} \left(\frac{x_{l-1}}{\sqrt{s_{l-1} s_l}} \right) &
=   \frac{\delta_{l-1,k}}{\sqrt{s_{l-1} s_l}}  - \frac{1}{s_{l-1}} \frac{x_{l-1}^2 \delta_{l-1,k}}{\sqrt{s_{l1} s_l}} \\
& \quad- \frac{1}{s_{l-1}}\frac{x_{l-1} x_k H(k-l)}{\sqrt{s_{l-1} s_l}} - \frac{1}{s_l}
\frac{x_{l-1} x_k H(k-l)}{\sqrt{s_{l-1} s_l}} \\ \\
& =  \frac{1}{\sqrt{s_{l-1}^3 s_l}} \left(\delta_{l-1,k} s_l - \frac{(s_l + s_{l-1})
x_{l-1} x_k H(k-l)}{s_l} \right), \\ \end{align*}
so
\begin{align*} e_k\left(\frac{x_{l-1}}{\sqrt{s_{l-1} s_l}} \right) &
= \sqrt{\frac{s_k}{s_{l-1}^3 s_l s_{k-1}}} \left(- \left(\delta_{l,k} s_l - \frac{(s_l + s_{l-1})
x_{l-1} x_{k-1} H(k-l-1)}{s_l} \right) \right.\\
 & \left.\quad + \frac{x_{k-1}}{s_k} \sum_{r = k}^n x_r
\left(\delta_{l-1,r} s_l - \frac{(s_l + s_{l-1})  x_{l-1} x_r H(r-l)}{s_l} \right) \right).\\ \\
& = \sqrt{\frac{s_k}{s_{l-1}^3 s_l s_{k-1}}} \left(- \delta_{l,k} s_l + \frac{(s_l + s_{l-1})
x_{l-1} x_{k-1} H(k-l-1)}{s_l}  \right. \\
& \quad \left. + \frac{x_{k-1}}{s_k} \left( x_{l-1} s_l H(l-1-k)  - \frac{(s_l + s_{l-1})  x_{l-1} s_{\max\{k,l\}}}{s_l} \right) \right). \end{align*}
\begin{itemize}
\item If $k = l$, then
\begin{align*} e_k\left(\frac{x_{k-1}}{\sqrt{s_{k-1} s_k}} \right) &
= \frac{1}{s_{k-1}^2} \left(- s_k - \frac{x_{k-1}}{s_k} \frac{(s_k + s_{k-1})  x_{k-1} s_k}{s_k} \right) \\
& = - \frac{1}{s_{k-1}^2}  \frac{ s^2_k +(s_k + s_{k-1})(s_{k-1} - s_k)}{s_k} = -\frac{1}{s_k}. \end{align*}
\item If $k > l$, then
$$e_k\left(\frac{x_{l-1}}{\sqrt{s_{l-1} s_l}} \right)
= \sqrt{\frac{s_k}{s_{l-1}^3 s_l s_{k-1}}} \left(\frac{(s_l + s_{l-1})
x_{l-1} x_{k-1}}{s_l}  - \frac{x_{k-1}}{s_k} \frac{(s_l + s_{l-1})  x_{l-1} s_k}{s_l} \right) = 0.$$
\item If $k < l$, then
\begin{align*} e_k\left(\frac{x_{l-1}}{\sqrt{s_{l-1} s_l}} \right) &
= \sqrt{\frac{s_k}{s_{l-1}^3 s_l s_{k-1}}} \left( \frac{x_{k-1}}{s_k} \left( x_{l-1} s_l - \frac{(s_l + s_{l-1})  x_{l-1} s_l}{s_l} \right) \right) \\
&= -\frac{x_{k-1} x_{l-1}}{\sqrt{s_{k-1} s_k s_{l-1} s_l}}. \end{align*}
\end{itemize}

\section{A non-controllable example: ${\rm SE}(3)$ rolling over $\mathbb{R}^6$}

\subsection{Calculation of the dimension of the orbits}
Let ${\rm SE}(3)$ be the group of orientation preserving isometries of $\mathbb{R}^3$. We consider the case of ${\rm SE}(3)$, endowed with a left invariant metric defined later, rolling over its tangent space at the identity $T_1\SE(3)=\mathfrak{se}(3)$, with metric obtained by restricting the left invariant metric on ${\rm SE}(3)$ to the identity. Our goal is to determine whether any two points in the configuration space can be joined by a curve tangent to the distribution presented in Definition~\ref{defdistr}. This problem is equivalent to the controllability of the system, that is, we want to obtain any configuration by rolling without twisting or slipping, from a given an initial configuration.

We give ${\rm SE}(3)$ coordinates as follows. For any $x \in {\rm SE}(3)$ there exist $C=(c_{ij})\in{\rm SO}(3)$ and $r=(r_1,r_2,r_3)\in \mathbb{R}^3$, such that $x=(C,r)$ acts via
$$x(y) = C y + r, \qquad \mbox{for all } y \in \mathbb{R}^3.$$

The tangent space of ${\rm SE}(3)$ at $x=(C,r)$ is spanned by the left invariant vector fields
\begin{equation} \begin{array}{rl}
e_1 = Y_1&=\frac{1}{\sqrt{2}} \left( C \cdot \frac{\partial}{\partial c_{12}} - C \cdot \frac{\partial}{\partial c_{21}}\right)\\ \\
&=\frac{1}{\sqrt{2}} \sum_{j=1}^3 \left(c_{j1} \frac{\partial}{\partial c_{j2}} - c_{j2} \frac{\partial}{\partial c_{j1}}\right)\end{array} \end{equation} \vspace{0.3cm}
\begin{equation} \begin{array}{rl}
e_2 = Y_2&= \frac{1}{\sqrt{2}} \left( C \cdot \frac{\partial}{\partial c_{13}} - C \cdot \frac{\partial}{\partial c_{31}}\right)\\ \\
&=\frac{1}{\sqrt{2}} \sum_{j=1}^3 \left(c_{j1} \frac{\partial}{\partial c_{j3}} - c_{j3} \frac{\partial}{\partial c_{j1}}\right)
\end{array} \end{equation} \vspace{0.3cm}
\begin{equation} \begin{array}{rl}
e_3 = Y_3&=\frac{1}{\sqrt{2}} \left( C \cdot \frac{\partial}{\partial c_{23}} - C \cdot \frac{\partial}{\partial c_{32}}\right)\\ \\
&=\frac{1}{\sqrt{2}} \sum_{j=1}^3 \left(c_{j2} \frac{\partial}{\partial c_{j3}} - c_{j3} \frac{\partial}{\partial c_{j2}}\right)
\end{array} \end{equation} \vspace{0.3cm}
\begin{align}
e_{k+3} = X_k &=C \cdot \frac{\partial}{\partial r_k} = \sum_{j=1}^3 c_{jk} \frac{\partial}{\partial r_j} & 
k=1,2,3.
\end{align}

Define a left invariant metric on ${\rm SE}(3)$ by declaring the vectors $e_1,\ldots,e_6$ to form an orthonormal basis. The mapping
$$\sum_{j=1}^6 \widehat{x}_j e_j(1) \mapsto (\widehat{x}_1, \widehat{x}_2, \widehat{x}_3, \widehat{x}_4, \widehat{x}_5, \widehat{x}_6) \in \mathbb{R}^6,$$
permits to identify $\mathfrak{se}(3)$ endowed with the induced metric, with $\mathbb{R}^6$ with the Euclidean metric.
We write $\hat{e}_k=\frac{\partial}{\partial \widehat{x}_k}$ on $\mathbb{R}^6$ and try to see how the intrinsic rollings of ${\rm SE}(3)$ on $\mathbb{R}^6$ behave. Note that $Q = {\rm SE}(3) \times \mathbb{R}^6 \times {\rm SO}(6)$, because both manifolds ${\rm SE}(3)$ and $\mathbb{R}^6$ are Lie groups, so their tangent bundles are trivial, and $\dim Q=27$.

Let us denote by $\nabla$ the Levi-Civita connection on ${\rm SE}(3)$ or $\mathbb{R}^6$ with respect to the corresponding Riemannian metrics defined above. The covariant derivatives $\nabla_{e_i} e_j$ are nonzero only in the following cases
\begin{eqnarray*}
\nabla_{Y_1} Y_2&=&- \nabla_{Y_2} Y_1 = - \frac{1}{2\sqrt{2}} Y_3\\
\nabla_{Y_1} Y_3&=&- \nabla_{Y_3} Y_1 = \frac{1}{2\sqrt{2}} Y_2\\
\nabla_{Y_2} Y_3&=&- \nabla_{Y_3} Y_2 = - \frac{1}{2\sqrt{2}} Y_1\\
\nabla_{Y_1} X_k&=&\frac{1}{\sqrt{2}}\left(\delta_{2,k} X_1 - \delta_{1,k} X_2 \right)\\
\nabla_{Y_2} X_k&=&\frac{1}{\sqrt{2}}\left(\delta_{3,k} X_1 - \delta_{1,k} X_3 \right)\\
\nabla_{Y_3} X_k&=&\frac{1}{\sqrt{2}}\left(\delta_{3,k} X_2 - \delta_{2,k} X_3 \right),
\end{eqnarray*}
where $\delta_{i,j}$ denotes the Kronecker symbol. On the other hand, it is well-known that $\nabla_{\hat{e}_i} \hat{e}_j= 0$ for any $i,j$. Proposition~\ref{propdist} and Definition~\ref{defdistr} show that the distribution $D$ over $Q$ is spanned by
\begin{equation}\label{distSE3}
\begin{array}{rcl}
Z_1&=&Y_1 + q Y_1 + \frac{1}{2\sqrt{2}} W_{23} + \frac{1}{\sqrt{2}} W_{45},\\
&&\\
Z_2&=&Y_2 + q Y_2 - \frac{1}{2\sqrt{2}} W_{13} + \frac{1}{\sqrt{2}} W_{46},\\
&&\\
Z_3&=&Y_3 + q Y_3 + \frac{1}{2\sqrt{2}} W_{12} + \frac{1}{\sqrt{2}} W_{56},\\
K_1&=&X_1 + q X_1,\\
K_2&=&X_2 + q X_2,\\
K_3&=&X_3 + q X_3.
\end{array}
\end{equation}

In order to determine the controllability of rolling ${\rm SE}(3)$ over $\mathbb{R}^6$, we employ the Orbit Theorem \cite{Hermann,Suss}.
In the case of $D$, defined by the vector fields~\eqref{distSE3} straightforward calculations yield
that the flag associated to $D$ is on the form
\begin{equation}\label{commSE3}
\begin{array}{rcl}
D^2&=&D \oplus {\rm span}\,\{W_{12}, W_{13}, W_{23}\},\\
D^3&=&D^2 \oplus {\rm span}\{qY_1, q Y_2, q Y_3\},\\
D^4&=&D^3,
\end{array}
\end{equation}
and so $\dim D^2 = 9$, $\dim D^k=12$ for all $k\geq3$ and the step of $D$ is 3.

Let $(x_0, \widehat{x}_0, A_0)$ be an arbitrary point in $Q$, and let $\mathcal{O}_{(x_0, \widehat{x}_0, A_0)}$ denote the subset of all points in $Q$ which are connected to $(x_0, \widehat{x}_0, A_0)$ by an intrinsic rolling. The Orbit Theorem asserts that, at each point, $D^3$ is contained in
the tangent space of the orbits. However, since we know that $D^3$ has a local basis, we
have the stronger result of
$$T_{(x,\widehat{x},A)}\mathcal{O}_{(x_0, \widehat{x}_0, A_0)}=D^3_{(x,\widehat{x},A)},$$
holding for all $(x,\widehat{x},A)\in\mathcal{O}_{(x_0, \widehat{x}_0, A_0)}$.

It follows from~\eqref{commSE3} that $\mathcal{O}_{(x_0, \widehat{x}_0, A_0)}$ has dimension 12. Since $\mathcal{O}_{(x_0, \widehat{x}_0, A_0)}$ is not the entire $Q$, we conclude that the system is not controllable.

We end this Section with a concrete example of an intrinsic rolling $q(t) = (x(t), \widehat{x}(t), A(t))$, where
$$x(0) = \text{id}_{\mathbb{R}^3}, \quad \widehat{x}(0) = 0, \quad A(0) = {\bf 1}.$$
Define the curve $x:[0,\tau] \to {\rm SE}(3)$ by
\begin{equation} \label{SEex}
x(t) y = \left(\begin{array}{ccc}
\cos \theta(t) & \sin \theta(t) & 0 \\ -\sin \theta(t) & \cos \theta(t) & 0 \\
0 & 0 & 1  \end{array} \right) \left(\begin{array}{c} y_1 \\ y_2 \\ y_3 \end{array} \right)
+ \left(\begin{array}{c} 0 \\ 0 \\ \psi(t) \end{array}  \right) \, ,
\end{equation}
where $\theta(t)$ and $\psi(t)$ are absolutely continuous functions with $\theta(0) = \psi(0) = 0$. Then $\dot{x} = \sqrt{2}\, \dot{\theta}(t) Y_1 + \dot{\psi}(t) X_3$ for almost every $t$, and the rolling has the form $\dot{q} = \sqrt{2}\, \dot{\theta}(t) Z_1 + \dot{\psi}(t) K_3$, or equivalently
\begin{eqnarray}
\dot x(t)&=&\sqrt{2}\, \dot{\theta}(t)Y_1 + \dot{\psi}(t) X_3\label{xdotSE3}\\
\dot{\widehat x}(t)&=&\sqrt{2}\, \dot{\theta}(t)\,q Y_1+ \dot{\psi}(t)\, q X_3\label{xhatdotSE3}\\
\dot A(t)&=&\dot{\theta}(t)\,\left(\frac{1}{2} W_{23}(A) + W_{45}(A)\right)\label{isomdotSE3}
\end{eqnarray}
for almost every $t$. It follows from equation~\eqref{xhatdotSE3} that
$$\widehat{x}(t) = \left(\begin{array}{c} \sqrt{2}\, \theta(t) \\ 0 \\ 0 \\ 0 \\ 0 \\ \psi(t) \end{array} \right).$$
Equation~\eqref{isomdotSE3} can be written as
$$\dot A(t)=A\cdot\left(\frac{\dot{\theta}(t)}{2} W_{23}(1) +\dot{\theta}(t)\, W_{45}(1)\right),$$
which can be solved by exponentiating to obtain
$$A(t) =
\left(\begin{array}{cccccc} 1 & 0 & 0 & 0 & 0 & 0 \\
0 & \cos\left(\frac{\theta(t)}2\right) & \sin\left(\frac{\theta(t)}2\right) & 0 & 0 & 0 \\
0 & - \sin\left(\frac{\theta(t)}2\right) & \cos\left(\frac{\theta(t)}2\right) & 0 & 0 & 0 \\
0 & 0 & 0 & \cos \theta(t) & \sin \theta(t) & 0 \\
0 & 0 & 0 & -\sin \theta(t) & \cos \theta(t) & 0 \\
0 & 0 & 0 & 0 & 0 & 1 \end{array} \right).
$$

\subsection{Imbedding of $\SE(n)$ into Euclidean space}\label{imbedSE(n)}
Since it is less obvious how to extend an intrinsic rolling of $\SE(3)$ on $\se(3)$ to an extrinsic rolling in ambient space, we describe an isometric imbedding of $\SE(n)$ into the Euclidean space $\real^{(n+1)^2}$. Identify an element $\overline{C} \in \real^{(n+1)^2}$ with the matrix
$$\overline{C} = \left(\begin{array}{ccc} \overline{c}_{11} & \cdots & \overline{c}_{1, n+1} \\
\vdots & \ddots & \vdots \\ \overline{c}_{n+1, 1} & \cdots & \overline{c}_{n+1, n+1} \end{array} \right) \, .$$
Define the inner product on $\real^{(n+1)^2}$ by
$$\left\langle \overline{C}_1 , \overline{C}_2 \right\rangle = \tr\left(\left(\overline{C}_1\right)^t \overline{C}_2\right).$$
Note that since
$$\left\langle \overline{C} , \overline{C} \right\rangle = \sum_{i,j = 1}^{n+1} |\overline{c}_{ij}|^2 \, ,$$
the metric $\langle\cdot,\cdot\rangle$ coincides with the Euclidean metric. From this we get that $\left\{\tfrac{\partial}{\partial \overline{c}_{ij}} \right\}_{i,j=1}^{n+1}$ is an orthonormal basis for the tangent bundle $T\real^{(n+1)^2}$ with respect to $\langle\cdot,\cdot\rangle$.

We define the imbedding of $\SE(3)$ into $\mathbb{R}^{(n+1)^2}$ by
$$\begin{array}{rrcl} \iota: & \SE(n) & \to & \real^{(n+1)^2} \\
& x = (C,r) & \mapsto & \overline{C} = \left( \begin{array}{cc} C & r \\ 0 & 1 \end{array} \right) \end{array}$$
This mapping is in fact an isometry of $\SE(n)$ onto its image. To see this, notice that the metrics coincide at the identity, and that the metric of $\real^{(n+1)^2}$, restricted to $\im \iota$, is left invariant under the action of $\SE(n)$. Hence, the metrics on $\SE(n)$ and $\im \iota$ coincide, and $\iota$ defines an isometric imbedding.

\subsection{Extrinsic rolling}
We will use the imbedding from Subsection~\ref{imbedSE(n)} to construct an extrinsic rolling of $SE(3)$ over $\mathfrak{se}(3)$ in $\mathbb{R}^{16}$. We use $\partial_{ij}$ to denote $\tfrac{\partial}{\partial \overline{c}_{ij}}$. For the sake of clarity, we denote by $M$ the image of $SE(3)$ by $\iota$. Then the vector fields spanning $TM$ are
$$e_1 = Y_1 = \frac{1}{\sqrt{2}} \sum_{i=1}^3 \left(\overline{c}_{i1} \partial_{i2} -
\overline{c}_{i2} \partial_{i1} \right),$$
$$e_2 = Y_2 = \frac{1}{\sqrt{2}} \sum_{i=1}^3 \left(\overline{c}_{i1} \partial_{i3}
- \overline{c}_{i3} \partial_{i1} \right),$$
$$e_3 = Y_3 = \frac{1}{\sqrt{2}} \sum_{i=1}^3 \left(\overline{c}_{i2} \partial_{i3}
- \overline{c}_{i3} \partial_{i2} \right),$$
$$e_{3+k} = X_k = \sum_{j=1}^3 \overline{c}_{ik} \partial_{i4}, \qquad k = 1, 2, 3,$$
where we suppressed $d\iota$ in the notation. We introduce an othonormal basis of $TM^\perp$
$$\Upsilon_1 = \frac{1}{\sqrt{2}} \sum_{j=1}^3 \left(\overline{c}_{j1} \partial_{j2}
+ \overline{c}_{j2} \partial_{j1} \right),$$
$$\Upsilon_2 = \frac{1}{\sqrt{2}} \sum_{j=1}^3 \left(\overline{c}_{j1} \partial_{j3}
+ \overline{c}_{j3} \partial_{j1} \right),$$
$$\Upsilon_3 = \frac{1}{\sqrt{2}} \sum_{j=1}^3 \left(\overline{c}_{j2} \partial_{j3}
+ \overline{c}_{j3} \partial_{j2} \right),$$
$$\Psi_\lambda =
\sum_{j=1}^3 \overline{c}_{j\lambda} \partial_{j\lambda}, \qquad \lambda =1, 2, 3$$
$$\Xi_\lambda = \partial_{4\mu}, \qquad \mu = 1, 2, 3, 4$$

We denote by $\widehat{M}$ the image of $\real^6$ into $\real^{16}$ by the imbedding
$$(\widehat{x}_1 , \widehat{x}_2, \widehat{x}_3, \widehat{x}_4, \widehat{x}_5, \widehat{x}_6)
\stackrel{\widehat{\iota}}{\mapsto}
\left(\begin{array}{cccc} 0 & \frac{1}{\sqrt{2}} \widehat{x}_1 & \frac{1}{\sqrt{2}} \widehat{x}_2
&  \widehat{x}_4 \\ - \frac{1}{\sqrt{2}} \widehat{x}_1 & 0 & \frac{1}{\sqrt{2}} \widehat{x}_3
& \widehat{x}_5 \\ - \frac{1}{\sqrt{2}} \widehat{x}_2 & - \frac{1}{\sqrt{2}} \widehat{x}_3 & 0
& \widehat{x}_6 \\ 0 & 0 & 0 & 0 \end{array} \right).$$

We have the following orthonormal basis of $T\widehat{M}$,
$$\hat{e}_1 = \frac{1}{\sqrt{2}}(\partial_{12} - \partial_{21}),$$
$$\hat{e}_2 = \frac{1}{\sqrt{2}}(\partial_{13} - \partial_{31}),$$
$$\hat{e}_3 = \frac{1}{\sqrt{2}}(\partial_{23} - \partial_{32}),$$
$$\hat{e}_{3+k} = \partial_{k4} \qquad k = 1, 2 ,3$$
while the vector fields spanning $T\widehat{M}^\bot$,
$$\hat{\epsilon}_1 = \frac{1}{\sqrt{2}}(\partial_{12} + \partial_{21}),$$
$$\hat{\epsilon}_2 = \frac{1}{\sqrt{2}}(\partial_{13} + \partial_{31}),$$
$$\hat{\epsilon}_3 = \frac{1}{\sqrt{2}}(\partial_{23} + \partial_{32}),$$
$$\hat{\epsilon}_{3+\kappa} = \partial_{\kappa\kappa}, \qquad \kappa = 1, 2 ,3,$$
$$\hat{\epsilon}_{6+\kappa} = \partial_{4\kappa} \qquad \kappa = 1, 2 ,3, 4.$$

In order to extend an intrinsic rolling $q(t)$ with $\pi(q(t)) = (x(t) , \widehat{x}(t))$, we will find an orthonormal frame of normal parallel vector fields along $x(t)$ and $\widehat{x}(t)$. Along $\widehat{x}(t)$, we may use the restriction of $\left\{ \hat{\epsilon}_\kappa \right\}_{\kappa = 1}^{10}$. For the curve $x(t)$ the answer is more complicated.

We first study the value of $\nabla^\bot$ for different choices of vector fields.
\begin{enumerate}
\item $\nabla^\bot_X \Xi_\lambda = 0$, for any tangential vector field $X$.
\item $\nabla^\perp_{X_k} \Upsilon = 0$, for any normal vector field $\Upsilon$.
\item Otherwise
$$\begin{array}{|c|c|c|c|c|c|c|}\hline
& \Upsilon_1 & \Upsilon_2 & \Upsilon_3 & \Psi_1 & \Psi_2  & \Psi_3 \\ \hline
\nabla^\bot_{Y_1} & \frac{1}{2} \left(\Psi_1 - \Psi_2\right)
& - \frac{1}{2\sqrt{2}} \Upsilon_3 & \frac{1}{2\sqrt{2}} \Upsilon_2
& - \frac{1}{2} \Upsilon_1 & \frac{1}{2} \Upsilon_1 & 0 \\ \hline
\nabla^\bot_{Y_2} & - \frac{1}{2\sqrt{2}} \Upsilon_3 & \frac{1}{2} \left(\Psi_1 - \Psi_3\right) &
\frac{1}{2\sqrt{2}} \Upsilon_1 &
- \frac{1}{2} \Upsilon_2 & 0 & \frac{1}{2} \Upsilon_2 \\ \hline
\nabla^\bot_{Y_3} & - \frac{1}{2\sqrt{2}} \Upsilon_2 & \frac{1}{2\sqrt{2}} \Upsilon_1
&  \frac{1}{2} \left(\Psi_2 - \Psi_3\right) &
0 & - \frac{1}{2} \Upsilon_3 & \frac{1}{2}\Upsilon_3 \\ \hline
\end{array}\,.$$
\end{enumerate}
We use the above relation to construct an extrinsic rolling. We will illustrate this by considering the curve~\eqref{SEex}.

Since $\dot{x}(t) = \sqrt{2} \, \dot{\theta}(t) Y_1(x(t)) + \dot{\psi}(t) X_3(x(t))$, the vector field
$$\Psi(t) = \sum_{\lambda =1}^3 \left(\upsilon_\lambda(t) \Upsilon_\lambda(x(t)) + \upsilon_{3 + \lambda}(t) \Psi_\lambda(x(t)) \right) \, ,$$
is normal parallel along $x(t)$ if
\begin{align*}\left(\dot{\upsilon}_1 - \frac{\dot{\theta}}{\sqrt{2}}\left(\upsilon_4 - \upsilon_5\right)\right) & \Upsilon_1
+ \left(\dot{\upsilon}_2 + \frac{\dot{\theta}}2 \upsilon_3\right) \Upsilon_2
+ \left(\dot{\upsilon}_3 - \frac{\dot{\theta}}2 \upsilon_2\right) \Upsilon_3 \\
&+ \left(\dot{\upsilon}_4 + \frac{\dot{\theta}}{\sqrt{2}} \upsilon_1\right) \Psi_1 
+ \left(\dot{\upsilon}_5 - \frac{\dot{\theta}}{\sqrt{2}} \upsilon_1\right) \Psi_2
+ \upsilon_6 \Psi_3 = 0.\end{align*}
Hence we define a parallel orthonormal frame along $x(t)$ by
$$\epsilon_1(t) = \cos \theta \Upsilon_1(x(t)) - \frac{1}{\sqrt{2}} \sin \theta \Psi_1(x(t))
+ \frac{1}{\sqrt{2}} \sin \theta \Psi_2(x(t)) \, ,$$
$$\epsilon_2(t) = \cos\left( \frac{\theta}{2} \right) \Upsilon_2(x(t)) + \sin\left( \frac{\theta}{2} \right)
\Upsilon_3(x(t)) \, ,$$
$$\epsilon_3(t) = - \sin\left( \frac{\theta}{2} \right) \Upsilon_2(x(t)) + \cos\left( \frac{\theta}{2} \right)
\Upsilon_3(x(t)) \, ,$$
$$\epsilon_4(t) = \frac{1}{\sqrt{2}} \sin \theta \Upsilon_1(x(t))
+ \frac{\cos \theta + 1}{2} \Psi_1(x(t)) + \frac{1 - \cos \theta}{2} \Psi_2(x(t)) \, ,$$
$$\epsilon_5(t) = - \frac{1}{\sqrt{2}} \sin \theta \Upsilon_1(x(t))
+ \frac{1- \cos \theta}{2} \Psi_1(x(t)) + \frac{1 + \cos \theta}{2} \Psi_2(x(t)) \, ,$$
$$\epsilon_6(t) = \Psi_3(x(t)) \, ,$$
$$\epsilon_{6 + \lambda}(t) = \Xi_\lambda(x(t))\, , \qquad \lambda = 1, 2, 3, 4.$$
Thus $p(t)$ is represented by a constant matrix in the bases $\{ \epsilon_\lambda(t) \}_{\lambda =1}^{10}$ and $\{ \hat{\epsilon}_\kappa(t) \}_{\kappa =1}^{10}$. Let us choose $p(t)$ to be the identity in these bases, since this is the configuration given by the imbedding.

The curve $g(t)=(q(t),p(t))$ in $\Isom^+(\real^{16})$ is given by
$$g(t) \overline{x} = \overline{A} \overline{x} + \overline{r}(t),$$
where
$$\tiny \overline{A}(t) = \left(
\begin{array}{cccccccccc|c}
  \cos^2\tfrac{\theta}{2} & -\frac{\sin \theta}{2} & 0 & 0 & \frac{\sin \theta}{2} & \frac{\cos \theta -1 }{2} & 0 & 0 & 0 & 0 & \\
  \frac{\sin \theta}{2} & \cos^2\tfrac{\theta}{2} & 0 & 0 & \sin^2 \tfrac{\theta}{2} & \frac{\sin \theta}{2} & 0 & 0 & 0 & 0 & \\
  0 & 0 & \cos\tfrac{\theta}{2} & 0 & 0 & 0 & \sin\tfrac{\theta}{2} & 0 & 0 & 0 & \\
  0 & 0 & 0 & 1 & 0 & 0 & 0 & 0 & 0 & 0 &  \\
  -\frac{\sin \theta}{2} & \sin^2\tfrac{\theta}{2} & 0 & 0 & \cos^2\tfrac{\theta}{2} & -\frac{\sin \theta}{2} & 0 & 0 & 0 & 0 & \mbox{\large \bf 0}_{10\times6} \\
  \frac{\cos \theta -1}{2} & -\frac{\sin \theta}{2} & 0 & 0 & \frac{\sin \theta}{2} & \cos^2 \tfrac{\theta}{2} & 0 & 0 & 0 & 0 & \\
  0 & 0 & -\sin \tfrac{\theta}{2} & 0 & 0 & 0 & \cos\tfrac{\theta}{2} & 0 & 0 & 0 & \\
  0 & 0 & 0 & 0 & 0 & 0 & 0 & 1 & 0 & 0 & \\
  0 & 0 & 0 & 0 & 0 & 0 & 0 & 0 & \cos\tfrac{\theta}{2} & -\sin\tfrac{\theta}{2} & \\
  0 & 0 & 0 & 0 & 0 & 0 & 0 & 0 & \sin\tfrac{\theta}{2} &\cos\tfrac{\theta}{2} & \\ 
    &  &  &  &  &  &  &  &  &  & \\\hline
    &  &  &  &  &  &  &  &  &  & \\
    &  &  &  & \mbox{\large \bf 0}_{6\times10} &  &  &  &  &  & \mbox{\large \bf 1}_6\\
\end{array} \right) \, ,$$
and $\displaystyle{\overline{r}(t) = \left( -1, \frac{\theta}{\sqrt{2}}, 0, 0, \frac{\theta}{\sqrt{2}}, -1,
 0, 0, 0, 0, -1, 0, 0, 0, 0, 0 \right)^t}$. Here, ${\bf 0}_{m\times n}$ denotes the zero matrix of size $m\times n$ and ${\bf 1}_6$ is the identity matrix of size $6\times6$.


\begin{thebibliography}{99}

\bibitem{AS} {\sc A.~Agrachev, Y.~Sachkov}, Control Theory from the Geometric Viewpoint, Springer, 2004.

\bibitem{BH} {\sc R. Bryant, L. Hsu}, {\it Rigidity of integral curves of rank $2$ distributions}. Invent. Math. {\bf 114} (1993), no. 2, 435--461.

\bibitem{Cartan1} {\sc \'E. Cartan}, Geometry of Riemannian spaces. Lie Groups: History, Frontiers and Applications, Series A, XIII. Math Sci Press, Brookline, MA, 1983.

\bibitem{Cartan2} {\sc \'E. Cartan}, {\it Les syst\`emes de Pfaff, \`a cinq variables et les \'equations aux d\'eriv\'ees partielles du second ordre.} Ann. Sci. \'Ecole Norm. Sup. {\bf 27} (1910), no. 3, 109--192.

\bibitem{Chow} {\sc W.~L.~Chow}, {\it Uber Systeme von linearen partiellen Differentialgleichungen erster Ordnung}, Math. Ann. {\bf 117} (1939), 98--105.

\bibitem{Hermann} {\sc R.~Hermann}, {\it On the accessibility problem in control theory}, Internat. Sympos. Nonlinear Differential Equations and Nonlinear Mechanics, Academic Press, New York (1963), 325--332.

\bibitem{ONeill} {\sc B. O'Neill}  {\it Semi-Riemannian geometry}, Academic Press, Elsevier 1983

\bibitem{Nash} {\sc J. Nash}, {\it The imbedding problem for Riemannian manifolds.} Ann. of Math. {\bf 63} (1956), no. 2, 20--63.

\bibitem{Rashevsky} {\sc P.~K.~Rashevski{\u\i}}, {\it About connecting two points of complete nonholonomic space by admissible curve}, Uch. Zapiski Ped. Inst. K.~Liebknecht {\bf 2} (1938), 83--94.

\bibitem{Son} {\sc E. D. Sontag}, Mathematical Control Theory, TAM 6, Second Edition.
 Springer-Verlag, New York, 1999. p. 467--492.

\bibitem{Sharpe} {\sc R. W. Sharpe}, Differential geometry. GTM, 166. Springer-Verlag, New York, 1997.

\bibitem{ST} {\sc I. M. Singer, J. A. Thorpe}, Lecture notes on elementary topology and geometry. UTM. Springer-Verlag, New York-Heidelberg, 1976.

\bibitem{Suss} {\sc H.~J.~Sussmann}, {\it Orbits of families of vector fields and integrability of distributions}, Internat. Sympos. Trans. Amer. Math. Soc. {\bf 180} (1973), 171--188.

\bibitem{Zimm}{\sc J.~A.~Zimmerman}, {\it Optimal control of the sphere $S^n$ rolling on $E^n$},  Math. Control Signals Systems {\bf 17} (2005), no. 1, 14--37.


\end{thebibliography}
\end{document}